\documentclass[final,a4paper]{siamltex}

\usepackage{amsmath,amsfonts,amssymb,mathtools}
\usepackage{graphicx,subfigure}
\usepackage{braket,dsfont,url,enumerate}

\usepackage{pdfpages} 
\usepackage{cmap}
\usepackage{hyperref}
\usepackage{color,dsfont,braket}
\usepackage{bbm}
\usepackage[headings]{fullpage}
\usepackage{fullpage}
\usepackage{fancyhdr}
\usepackage{times}
\usepackage{cmap}
\usepackage{pgf}
\usepackage{comment}
\usepackage{ulem}

\newcommand{\C}{\mathbb{C}}

\newcommand{\na}{\nabla}
\newcommand{\pa}{\partial}
\newcommand{\eps}{\varepsilon}
\newcommand{\om}{\omega}
\newcommand{\Om}{\Omega}

\newcommand{\si}{\sigma}

\newcommand{\IOm}{I\times \Om}
\newcommand{\ImOm}{I_m\times \Om}

\newcommand{\Id}{\operatorname{Id}}
\newcommand{\diam}{\operatorname{diam}}

\newcommand{\vertiii}[1]{{\left\vert\kern-0.25ex\left\vert\kern-0.25ex\left\vert #1
    \right\vert\kern-0.25ex\right\vert\kern-0.25ex\right\vert}}

\newcommand{\norm}[1]{\lVert#1\rVert}
\newcommand{\ltwonorm}[1]{\lVert#1\rVert_{L^2(\Omega)}}

\newcommand{\abs}[1]{\lvert#1\rvert}
\newcommand{\Ppol}[1]{\mathcal{P}_{#1}}

\newcommand{\half}{\frac{1}{2}}
\newcommand{\thalf}{\frac{3}{2}}

\newcommand{\R}{\mathbb{R}}
\newcommand{\lh}{\abs{\ln{h}}}
\newcommand{\lk}{\ln{\frac{T}{k}}}

\newcommand{\Xkh}{X^{q,r}_{k,h}}
\newcommand{\Th}{\mathcal{T}_h}


\definecolor{darkred}{rgb}{.7,0,0}

\definecolor{green}{rgb}{0,0.7,0}



\begin{document}


\title{Global and interior pointwise best approximation results for the gradient of Galerkin solutions for parabolic problems}
\author{
Dmitriy Leykekhman\footnotemark[2]
\and
Boris Vexler\footnotemark[3]
}

\pagestyle{myheadings}
\markboth{DMITRIY LEYKEKHMAN AND BORIS VEXLER}{Parabolic best approximation pointwise }

\maketitle

\renewcommand{\thefootnote}{\fnsymbol{footnote}}
\footnotetext[2]{Department of Mathematics,
               University of Connecticut,
              Storrs,
              CT~06269, USA (leykekhman@math.uconn.edu). }

\footnotetext[3]{Chair of Optimal Control, Faculty of Mathematics, Technical University of Munich, Boltzmannstra{\ss}e 3, 85748 Garching b. Munich, Germany
(vexler@ma.tum.de). }

\renewcommand{\thefootnote}{\arabic{footnote}}


\begin{abstract}
In this paper we establish best approximation property of fully discrete  Galerkin solutions of second order parabolic problems on convex polygonal and polyhedral domains in the $L^\infty(I;W^{1,\infty}(\Om))$ norm. The discretization method consists of continuous Lagrange finite elements in space and discontinuous Galerkin methods of arbitrary order in time. The method of the proof differs from the established fully discrete error estimate techniques and uses only elliptic results and discrete maximal parabolic regularity for discontinuous Galerkin methods established by the authors in \cite{LeykekhmanD_VexlerB_2016b}. In addition, the proof does not require any relationship between spatial mesh sizes  and time steps. We also establish interior best approximation property that shows more local dependence of the error at a point.
\end{abstract}

\begin{keywords}
optimal control, pointwise control, parabolic problems, finite elements, discontinuous Galerkin, error estimates, pointwise error estimates
\end{keywords}

\begin{AMS}
\end{AMS}
\section{Introduction}
Let $\Om$ be a convex polygonal/polyhedral domain in $\mathbb{R}^N$, $N=2,3$ and $I=(0,T)$ with some $T>0$. We consider a second order parabolic problem
\begin{equation}\label{eq: heat equation}
\begin{aligned}
u_t(t,x)-\Delta u(t,x) &= f(t,x), & (t,x) &\in \IOm,\;  \\
    u(t,x) &= 0,    & (t,x) &\in I\times\pa\Omega, \\
   u(0,x) &= u_0(x),    & x &\in \Omega.
\end{aligned}
\end{equation}

To discretize the problem we use  continuous Lagrange finite elements in space and discontinuous Galerkin methods in time. The precise description of the method is given in section \ref{sec: discretization}.
Our main goal in this  paper is to establish global and interior (local) space-time pointwise best approximation type results for the fully discrete error. The global estimate has the following structure:
\begin{equation}\label{eq: best approximation property}
\|\na(u-u_{kh})\|_{L^\infty(I\times \Om)}\le C\ell_{k}\ell_{h}\|\na(u-\chi)\|_{L^\infty(I\times\Om)},
\end{equation}
where $u_{kh}$ denotes the fully discrete solution and $\chi$ is an arbitrary element of the finite dimensional space, $h$ stands for spatial mesh size and $k$ for the maximal time step, and $\ell_{k}, \ell_h$ stand for some logarithmic terms. Such results are sometimes called symmetric estimates, cf.~\cite{ChrysafinosK_WalkingtonNJ_2006a, DupontTF_LiuY_2002}.
The interior (local) result provides an estimate of the error $|\nabla(u-u_{u_{kh}})(\tilde t, x_0)|$ for given $\tilde t \in (0,T]$ and $x_0 \in \Omega$ in terms of best approximation on a ball $B_d(x_0)$ and some global terms in weaker norms. Precise results are stated in section 2, see Theorem~\ref{thm:global_best_approx} and Theorem~\ref{thm:local_best_approx}. For the global estimate~\eqref{eq: best approximation property} we assume that  $f$ and $u_0$ are such that $\nabla u \in C(\bar I \times \bar \Omega)$. For the interior result we essentially need only $\nabla u \in C(\bar I \times \bar B_d(x_0)) \cap L^2(I \times \Omega)$.
Such best approximation type results have only natural assumptions on the problem data and are desirable in many applications, for example optimal control problems governed by parabolic equations with gradient constraints, cf.~\cite{LudoviciWollner:2015}. We refer to a recent paper \cite{TantardiniF_VeeserA_2016a} for a further discussion on the importance of best approximation results and difficulties associated with obtaining such estimates for parabolic problems.

For elliptic problems the best approximation property as ~\eqref{eq: best approximation property}, which is equivalent to the stability of the Ritz projection in $W^{1,\infty}(\Omega)$ norm, is well known. The first log-free result was established in~\cite{RRannacher_RScott_1982} on convex polygonal domains. Later  the result was extended to convex polyhedral domains with some restriction on angles in~\cite{BrennerSC_ScottLR_1994}. This restriction was removed in~\cite{JGuzman_DLeykekhman_JRossmann_AHSchatz_2009a} and even extended to certain graded meshes in~\cite{DemlowA_LeykekhmanD_SchatzAH_WahlbinLB_2010a}. For parabolic problems similar results are rather scarce. The main body of the work on pointwise error estimates for parabolic problems are devoted to $L^\infty(I\times \Om)$ error estimates, see~\cite{LeykekhmanD_VexlerB_2016d} for review of the corresponding results. We are aware of only three publications dealing with pointwise error estimates for the gradient of the error.

In two space dimensions, semidiscrete error estimates were studied in \cite{ChenH_1993} and the fully discrete Crank-Nicolson method was studied in \cite{ThomeeV_XuJ_ZhangNY_1989}. Since the main motivation of both investigations was the question of superconvergence of the gradient of the error, it was assumed that the solution is sufficiently smooth. More general fully discrete error estimates using Pad\'{e} time schemes  were obtained in  \cite{LeykekhmanD_WahlbinLB_2008a} for smooth domains in $\mathbb{R}^N$.

In both publications dealing with fully discrete error estimates, \cite{LeykekhmanD_WahlbinLB_2008a} and \cite{ThomeeV_XuJ_ZhangNY_1989},  the proofs are based on the  splitting $u-u_{kh}=(u-R_hu)+(R_hu-u_{kh})$, where $R_h$ is the Ritz projection. This idea was first introduced by M. Wheeler \cite{WheelerMF_1973a} in order to obtain optimal order error estimates in $L^2$ norm in space. The main idea of this approach is the following: the first part of the error is treated by elliptic results and the second part satisfies a certain parabolic equation with the right-hand side involving $(u-R_hu)$, which can be treated by results from rational approximation of analytic semigroups in Banach spaces (see also \cite[Thm~.8.6]{ThomeeV_2006}). However, this approach requires additional smoothness of the solution, well beyond the natural regularity $\nabla u \in C(\bar I \times \bar \Omega)$ of the exact solution. Our approach is completely different. It uses newly established discrete maximal parabolic regularity results~\cite{LeykekhmanD_VexlerB_2016b} for discontinuous Galerkin time schemes, see section~\ref{sec: discrete max} below, and the discrete resolvent estimate of the form:
\begin{equation}\label{eq: resolven estimate intro}
\vertiii{(z+\Delta_h)^{-1}\chi}\le  \frac{M_h}{|z|}\vertiii{\chi},\quad\text{for}\ z\in \mathbb{C}\setminus \Sigma_{\gamma},\quad \text{for all}\ \chi\in \mathbb{V}_h=V_h+iV_h,
\end{equation}
where $M_h$ may depend on $\abs{\ln{h}}$ but is independent of $h$ otherwise,
$V_h$ is the space of continuous Lagrange finite elements of degree $r$, $\Delta_h$ is the discrete Laplace operator, see~\eqref{eq:discreteLaplace} below, and
\[
\Sigma_\gamma= \Set{z \in \mathbb{C} | \abs{\arg{(z)}} \le \gamma},
\]
for some $\gamma\in (0,\frac{\pi}{2})$.
In~\cite{LeykekhmanD_VexlerB_2016d} we showed this estimate for the triple norm $\vertiii{v_h}=\|\sigma^{\frac{N}{2}}v_h\|_{L^2(\Om)}$ with the weight function  $\si(x)=\sqrt{|x-x_0|^2+K^2h^2}$. This norm behaves similar to the $L^1(\Omega)$ norm, and we used the corresponding discrete maximal parabolic result to prove (global and interior) pointwise best approximation for function values of the solution $u$. Here, we will in addition require the estimate~\eqref{eq: resolven estimate intro} with respect to the norm
$$
\vertiii{v_h}=\|\sigma^{\frac{N}{2}}\na\Delta_h^{-1}v_h\|_{L^2(\Om)},
$$
which behaves similar to the the $W^{-1,1}(\Omega)$ norm, see Theorem~\ref{thm: very weighted_Resolvent} below. This allows us to prove our main results of  (global and interior) pointwise best approximation for the gradient of the solution.

The rest of the paper is organized as follows. In the next section we describe the discretization method and state our main results. In section \ref{sec:elliptic}, we review some essential elliptic results in weighted norms. Section \ref{sec: weighted resolvent} is devoted to establishing the resolvent estimate in weighted norms. In section \ref{sec: discrete max}, we review our discrete maximal parabolic regularity result. Finally, in sections \ref{sec: proofs global results} and \ref{sec: proofs local results}, we provide proofs of the global and interior best approximation properties of the fully discrete solution.


\section{Discretization and statement of main results}\label{sec: discretization}
To introduce the time discontinuous Galerkin discretization for the problem,
 we partition  $(0,T]$ into subintervals $I_m = (t_{m-1}, t_m]$ of length $k_m = t_m-t_{m-1}$, where $0 = t_0 < t_1 <\cdots < t_{M-1} < t_M =T$. The maximal and minimal time steps are denoted by $k =\max_{m} k_m$ and $k_{\min}=\min_{m} k_m$, respectively.
We impose the following conditions on the time mesh (as in ~\cite{LeykekhmanD_VexlerB_2016b} or ~\cite{DMeidner_RRannacher_BVexler_2011a}):
\begin{enumerate}[(i)]
  \item There are constants $c,\beta>0$ independent of $k$ such that
    \[
      k_{\min}\ge ck^\beta.
    \]
  \item There is a constant $\kappa>0$ independent of $k$ such that for all $m=1,2,\dots,M-1$
    \[
    \kappa^{-1}\le\frac{k_m}{k_{m+1}}\le \kappa.
    \]
  \item It holds $k\le\frac{1}{4}T$.
\end{enumerate}
The semidiscrete space $X_k^q$ of piecewise polynomial functions in time is defined by
\[
X_k^q=\Set{v_{k}\in L^2(I;H^1_0(\Om)) | v_{k}|_{I_m}\in \Ppol{q}(I_m;H^1_0(\Om)), \ m=1,2,\dots,M },
\]
where $\Ppol{q}(I_m;V)$ is the space of polynomial functions of degree $q$ in time with values in a Banach space $V$.
We will employ the following notation for time dependent functions 
\begin{equation}\label{def: time jumps}
v^+_m=\lim_{\eps\to 0^+}v(t_m+\eps), \quad v^-_m=\lim_{\eps\to 0^+}v(t_m-\eps), \quad [v]_m=v^+_m-v^-_m,
\end{equation}
if these limits exist.
Next we define the following bilinear form
\begin{equation}\label{eq: bilinear form B}
 B(v,\varphi)=\sum_{m=1}^M \langle v_t,\varphi \rangle_{I_m \times \Omega} + (\na v,\na \varphi)_{\IOm}+\sum_{m=2}^M([v]_{m-1},\varphi_{m-1}^+)_\Om+(v_{0}^+,\varphi_{0}^+)_\Om,
\end{equation}
where $( \cdot,\cdot )_{\Omega}$ and $( \cdot,\cdot )_{I_m \times \Omega}$ are the usual $L^2$ space and space-time inner-products,
$\langle \cdot,\cdot \rangle_{I_m \times \Omega}$ is the duality product between $ L^2(I_m;H^{-1}(\Omega))$ and $ L^2(I_m;H^{1}_0(\Omega))$. We note, that the first sum vanishes for $v \in X^0_k$. 
Rearranging the terms in \eqref{eq: bilinear form B}, we obtain an equivalent (dual) expression of $B$:
\begin{equation}\label{eq:B_dual}
 B(v,\varphi)= - \sum_{m=1}^M \langle v,\varphi_t \rangle_{I_m \times \Omega} + (\na v,\na \varphi)_{\IOm}-\sum_{m=1}^{M-1} (v_m^-,[\varphi]_m)_\Om + (v_M^-,\varphi_M^-)_\Om.
\end{equation}

To introduce the fully discrete approximation, let $\Th$ for $h>0$  denote  a quasi-uniform triangulation of $\Om$  with mesh size $h$, i.e., $\Th = \{\tau\}$ is a partition of $\Om$ into cells (triangles or tetrahedrons) $\tau$ of diameter $h_\tau$ such that for $h=\max_{\tau} h_\tau$,
$$
\operatorname{diam}(\tau)\le h \le C |\tau|^{\frac{1}{N}}, \quad \text{for all }\; \tau\in \Th,
$$
hold. Let $V_h$ be the set of all functions in $H^1_0(\Om)$ that are polynomials of degree $r$ on each $\tau$, i.e. $V_h$ is the usual space of conforming finite elements.
To obtain the fully discrete approximation we consider the space-time finite element space
\begin{equation} \label{def: space_time}
\Xkh=\Set{v_{kh} \in L^2(I;H^1_0(\Omega)) | v_{kh}|_{I_m}\in \Ppol{q}(I_m;V_h), \ m=1,2,\dots,M}, \quad q\geq 0,\quad r\geq 1.
\end{equation}
We define a fully discrete dG($q$)cG($r$) solution $u_{kh} \in \Xkh$ by
\begin{equation}\label{eq:fully discrete heat with RHS}
B(u_{kh},\varphi_{kh})=(f,\varphi_{kh})_{\IOm}+(u_0,\varphi_{kh,0}^+)_\Om \quad \text{for all }\; \varphi_{kh}\in \Xkh.
\end{equation}

\subsection{Main results}

Now we state our main results.
The first result establishes the global best approximation property of the fully discrete Galerkin solution in the $L^\infty(I;W^{1,\infty}(\Om))$ norm.

\begin{theorem}[Global best approximation]\label{thm:global_best_approx}
Let $u$ and $u_{kh}$ satisfy \eqref{eq: heat equation} and \eqref{eq:fully discrete heat with RHS}
respectively. Then, there exists a constant $C$ independent of $k$ and  $h$ such that
\[
\norm{\na(u-u_{kh})}_{L^\infty(I\times \Om)} \le C \ell_k \ell_h \inf_{\chi \in \Xkh} \norm{\na (u-\chi)}_{L^\infty(I\times\Om)},
\]
where $\ell_k = \lk$ and $\ell_h=\lh^{\frac{2N-1}{N}}$.
\end{theorem}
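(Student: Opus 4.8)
The plan is to prove a pointwise bound that is uniform over the evaluation point $(\tilde t, x_0) \in \IOm$ and over the coordinate direction, and then take the supremum to recover the $L^\infty(\IOm)$ norm. Fix such a point and a direction $x_i$, and start from the splitting $u - u_{kh} = (u-\chi) + (\chi - u_{kh})$ with $\chi \in \Xkh$ arbitrary. The term $\pa_{x_i}(u-\chi)(\tilde t, x_0)$ is bounded directly by $\norm{\na(u-\chi)}_{L^\infty(\IOm)}$, so the entire difficulty lies in the fully discrete part $\pa_{x_i}(\chi - u_{kh})(\tilde t, x_0)$. Since $\chi - u_{kh} \in \Xkh$ is a finite element function, I would introduce a regularized delta-derivative functional $\partial$, supported in a single element near $x_0$, with the reproducing property $(\partial, v_{kh}(\tilde t))_\Om = \pa_{x_i} v_{kh}(\tilde t, x_0)$ for all $v_{kh}\in\Xkh$, together with the discrete backward-in-time Green's function $g_{kh} \in \Xkh$ defined by $B(\varphi_{kh}, g_{kh}) = (\partial, \varphi_{kh}(\tilde t))_\Om$ for all $\varphi_{kh}\in\Xkh$. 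This functional is bounded on $\Xkh$ and $B$ is an isomorphism there, so $g_{kh}$ is well defined. Galerkin orthogonality for \eqref{eq:fully discrete heat with RHS} then yields
\[
\pa_{x_i}(\chi - u_{kh})(\tilde t, x_0) = B(\chi - u_{kh}, g_{kh}) = B(\chi - u, g_{kh}) = -B(u-\chi, g_{kh}).
\]

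The next step is to expand $B(u-\chi, g_{kh})$ by means of the dual form \eqref{eq:B_dual}, which places all time derivatives and jumps on the smooth discrete object $g_{kh}$ rather than on $u-\chi$, and to isolate the leading term $(\na(u-\chi), \na g_{kh})_{\IOm}$. This term is bounded by $\norm{\na(u-\chi)}_{L^\infty(\IOm)}\,\norm{\na g_{kh}}_{L^1(\IOm)}$. The remaining contributions pair function values of $e = u-\chi$ against $(g_{kh})_t$, the time jumps $[g_{kh}]_m$, and the terminal value $g_{kh,M}^-$; since $e$ vanishes on $\pa\Om$ we have $\norm{e}_{L^\infty(\Om)} \le C\norm{\na e}_{L^\infty(\Om)}$, so these terms too are controlled by $\norm{\na(u-\chi)}_{L^\infty(\IOm)}$ times $L^1$-type norms of $g_{kh}$, bounded by the same machinery as below. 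Thus the theorem reduces to the single a priori bound $\norm{\na g_{kh}}_{L^1(\IOm)} \le C \ell_k \ell_h$ (together with the analogous lower-order bounds).

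This weighted estimate for the discrete Green's function is the heart of the argument and is where the tools of the paper enter. Using the weight $\sigma(x) = \sqrt{|x-x_0|^2+K^2h^2}$ and Cauchy--Schwarz in space,
\[
\norm{\na g_{kh}(t)}_{L^1(\Om)} \le \norm{\sigma^{-\frac{N}{2}}}_{L^2(\Om)} \norm{\sigma^{\frac{N}{2}}\na g_{kh}(t)}_{L^2(\Om)} = \norm{\sigma^{-\frac{N}{2}}}_{L^2(\Om)}\,\vertiii{\Delta_h g_{kh}(t)},
\]
where the last identity uses $\na g_{kh} = \na \Delta_h^{-1}\Delta_h g_{kh}$ and the $W^{-1,1}$-like triple norm of Theorem~\ref{thm: very weighted_Resolvent}, and where $\norm{\sigma^{-\frac{N}{2}}}_{L^2(\Om)}^2 = \int_\Om (|x-x_0|^2+K^2h^2)^{-\frac{N}{2}}\,dx \le C\lh$ supplies one factor $\lh^{\frac12}$. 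After integrating in time it remains to bound $\int_I \vertiii{\Delta_h g_{kh}(t)}\,dt$. I would obtain this from the weighted resolvent estimate \eqref{eq: resolven estimate intro}: by a contour representation it yields the discrete analytic semigroup smoothing bound $\vertiii{\Delta_h g_{kh}(t)} \le C M_h\,|t-\tilde t|^{-1}\,\vertiii{\partial}$, cut off at the discrete time scale $k$, whose integral over $I$ produces the factor $\ell_k = \lk$, while $M_h$ and $\vertiii{\partial}$ (the $W^{-1,1}$-like norm of the regularized delta-derivative) supply the remaining powers of $\lh$; equivalently one invokes the discrete maximal parabolic regularity of section~\ref{sec: discrete max} in the endpoint $L^1(I)$ case, which is exactly what generates the $\lk$ factor. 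Collecting all logarithmic contributions gives $\ell_k\ell_h$ with $\ell_h = \lh^{\frac{2N-1}{N}}$.

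The main obstacle is precisely this last step: transferring the weighted resolvent estimate into a sharp $L^1$-in-time bound for the discrete Green's function with the singular data $\delta_{\tilde t}\otimes\partial$, and tracking the accumulation of logarithmic factors so that the exact, dimension-dependent power $\frac{2N-1}{N}$ emerges. One must also ensure that every constant is uniform in $(\tilde t, x_0)$ and that the weight $\sigma$ interacts correctly with $\Delta_h$ and $\na$ on $\Xkh$, which is exactly the content of the weighted elliptic and resolvent results reviewed and proved in sections~\ref{sec:elliptic}--\ref{sec: weighted resolvent}.
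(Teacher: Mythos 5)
Your overall strategy --- duality with a regularized discrete Green's function for the data $D\tilde\delta\otimes\tilde\theta$, expansion through the dual form \eqref{eq:B_dual}, weighted Cauchy--Schwarz with $\|\sigma^{-N/2}\|_{L^2(\Om)}\le C\lh^{1/2}$, and discrete maximal parabolic regularity in the norm $\|\sigma^{N/2}\na\Delta_h^{-1}(\cdot)\|_{L^2(\Om)}$ backed by Theorem~\ref{thm: very weighted_Resolvent} --- is exactly the paper's, and your treatment of the leading term $(\na(u-\chi),\na g_{kh})_{\IOm}$ reproduces the paper's estimate of $J_2$, including the correct bookkeeping $\lh^{\frac12+\frac{N-1}{N}+\frac12}=\lh^{\frac{2N-1}{N}}$.

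There is, however, a genuine gap in your dismissal of the remaining terms. You propose to bound $\sum_m(e,\pa_t g_{kh})_{\ImOm}$ and the jump terms $\sum_m(e_m^-,[g_{kh}]_m)_\Om$ by $\|e\|_{L^\infty}\le C\|\na e\|_{L^\infty}$ times ``$L^1$-type norms of $g_{kh}$, bounded by the same machinery.'' This fails by a factor of $h^{-1}$: the spatial data of $g_{kh}$ is $P_hD\tilde\delta$, whose $L^1(\Om)$ norm is of order $h^{-1}$ by \eqref{delta1}, and likewise $\|\sigma^{N/2}P_hD\tilde\delta\|_{L^2(\Om)}\le Ch^{-1}$; hence discrete maximal parabolic regularity in the $L^1(\Om)$ norm or in the weighted $L^2$ norm yields only $\sum_m\|\pa_t g_{kh}\|_{L^1(I_m\times\Om)}+\sum_m\|[g_{kh}]_m\|_{L^1(\Om)}\le C\lk\,h^{-1}$, not $C\ell_k\ell_h$. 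The only norm in which $P_hD\tilde\delta$ is (up to $\lh^{1/2}$) bounded is the $W^{-1,1}$-like norm $\|\sigma^{N/2}\na\Delta_h^{-1}(\cdot)\|_{L^2(\Om)}$ of Lemma~\ref{lemma: Delta_h-1 for Di delta}, and to exploit it you must pair $\pa_t g_{kh}$ and $[g_{kh}]_m$ against the \emph{gradient} of a finite element function, not against $e$ itself. The paper achieves this by splitting $u=R_hu+(I-R_h)u$: for the first piece, $(R_hu,\pa_t g_{kh})_\Om=(R_hu,\Delta_h\Delta_h^{-1}\pa_t g_{kh})_\Om=-(\na R_hu,\na\Delta_h^{-1}\pa_t g_{kh})_\Om$, and $W^{1,\infty}$ stability of the Ritz projection keeps $\|\na R_hu\|_{L^\infty}$ under control while the Green's function is measured in the good norm; for the second piece, Lemma~\ref{lemma: approximation of Ritz} supplies the extra factor $h\lh$ that exactly cancels the $h^{-1}$ in $\|P_hD\tilde\delta\|_{L^1(\Om)}$. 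Without this Ritz-projection splitting (or an equivalent device) your argument for the $\pa_t g_{kh}$ and jump terms does not close.
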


The proof of this theorem is given in Section~\ref{sec: proofs global results}.

For the error at a given point $x_0\in \Omega$ we obtain a sharper results. For elliptic problems similar results were obtained in~\cite{AHSchatz_LBWahlbin_1977a, AHSchatz_LBWahlbin_1995a}. We denote by $B_d=B_d(x_0)$ the  ball of radius $d$ centered at $x_0$.
\begin{theorem}[Interior best approximation]\label{thm:local_best_approx}
Let $u$ and $u_{kh}$ satisfy \eqref{eq: heat equation} and \eqref{eq:fully discrete heat with RHS}, respectively and let  $d>4h$. Assume $x_0\in \Om$ and $\tilde{t}\in I_m$ for some $m=1,2,\dots, M$ and $B_d\subset\subset\Om$. Then there exists a constant $C$ independent of $h$, $k$, and $d$ such that
$$
\begin{aligned}
|\na(u-u_{kh})(\tilde{t},x_0)|&\le C \ell_k \ell_h\inf_{\chi \in \Xkh}\Biggl\{\|\na(u-\chi)\|_{L^\infty((0,t_m)\times B_d(x_0))}+d^{-1}\|u-\chi\|_{L^\infty((0,t_m)\times B_d(x_0))}\\
 &+d^{-\frac{N}{2}}\bigg(\|\na(u-\chi)\|_{L^\infty((0,t_m);L^2(\Om))}+d^{-1}\|u-\chi\|_{L^\infty((0,t_m);L^2(\Om))}\bigg)\Biggr\},
\end{aligned}
$$
with $\ell_k$ and $\ell_h$ defined as in Theorem~\ref{thm:global_best_approx}.
\end{theorem}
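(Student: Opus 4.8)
The plan is to represent the pointwise directional derivative of the error by duality against a discrete Green's function and then to localize by a cutoff, so that the singular behaviour near $x_0$ is captured by a local best approximation on $B_d$ while the contribution from $\Om \setminus B_d$ is damped by the decay of the Green's function. Fix a unit vector, write $\partial$ for the corresponding directional derivative, fix an arbitrary $\chi \in \Xkh$, and set $e_{kh} = \chi - u_{kh} \in \Xkh$. I would introduce a regularized Dirac function $\delta^h$ supported in the mesh cell containing $x_0$ and exact on polynomials of degree $r$ there, so that $\partial v_{kh}(\tilde t, x_0) = -(\partial \delta^h, v_{kh}(\tilde t))_\Om$ for every $v_{kh} \in \Xkh$, and then define the discrete (backward) Green's function $g_{kh} \in \Xkh$ by $B(\varphi_{kh}, g_{kh}) = -(\partial \delta^h, \varphi_{kh}(\tilde t))_\Om$ for all $\varphi_{kh} \in \Xkh$. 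Splitting $\partial(u - u_{kh})(\tilde t, x_0) = \partial(u - \chi)(\tilde t, x_0) + \partial e_{kh}(\tilde t, x_0)$ and using the definition of $g_{kh}$ together with Galerkin orthogonality \eqref{eq:fully discrete heat with RHS} gives $\partial e_{kh}(\tilde t, x_0) = B(e_{kh}, g_{kh}) = B(\chi - u, g_{kh})$. The first term satisfies $|\partial(u-\chi)(\tilde t, x_0)| \le \|\nabla(u-\chi)\|_{L^\infty((0,t_m)\times B_d)}$, which is the leading local term, so everything reduces to estimating $B(\chi - u, g_{kh})$.

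Next I would expand $B(\chi - u, g_{kh})$ through its dual form \eqref{eq:B_dual}, which moves all time derivatives and temporal jumps onto the smooth discrete function $g_{kh}$. Using the $L^2$-projection $P_h$ and discrete integration by parts (writing $\partial_t g_{kh} = \Delta_h \Delta_h^{-1}\partial_t g_{kh}$ and exploiting $(I - P_h)(\chi - u) \perp V_h$), each contribution is rewritten as a pairing of $\nabla P_h(\chi - u)$, equivalently $\nabla(\chi - u)$, against the two quantities $\nabla g_{kh}$ and $\nabla \Delta_h^{-1}\partial_t g_{kh}$. These are precisely what is controlled by the weighted machinery: since $\vertiii{\Delta_h g_{kh}} = \|\sigma^{N/2}\nabla g_{kh}\|_{L^2(\Om)}$ and $\vertiii{\partial_t g_{kh}} = \|\sigma^{N/2}\nabla \Delta_h^{-1}\partial_t g_{kh}\|_{L^2(\Om)}$, the discrete maximal parabolic regularity of Section~\ref{sec: discrete max} applied in the triple norm, combined with the weighted resolvent estimate of Theorem~\ref{thm: very weighted_Resolvent}, yields $\int_0^{t_m}\big(\|\sigma^{N/2}\nabla g_{kh}\|_{L^2(\Om)} + \|\sigma^{N/2}\nabla\Delta_h^{-1}\partial_t g_{kh}\|_{L^2(\Om)}\big)\,dt \le C \ell_k \ell_h$, where $\ell_k$ absorbs the $\lk$ factor from maximal parabolic regularity and $\ell_h = \lh^{\frac{2N-1}{N}}$ collects the logarithmic losses from the weight and the resolvent constant $M_h$. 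Pairing in $L^\infty(I\times\Om)$ against this weighted $L^1$-in-time control already reproduces the global estimate of Theorem~\ref{thm:global_best_approx}, which I may invoke directly.

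For the interior sharpening I would introduce a smooth cutoff $\omega$ with $\omega \equiv 1$ on $B_{d/2}(x_0)$, $\operatorname{supp}\omega \subset B_d(x_0)$ and $\|\nabla \omega\|_{L^\infty} \lesssim d^{-1}$, and split each pairing according to $\chi - u = \omega(\chi - u) + (1-\omega)(\chi - u)$. On the near part the $L^\infty$--weighted-$L^1$ bound uses $\|\sigma^{-N/2}\|_{L^2(B_d)} \lesssim \sqrt{\lh}$ (since $\int_{B_d}\sigma^{-N}\,dx \sim \ln(d/h)$), producing the local terms $\|\nabla(u-\chi)\|_{L^\infty((0,t_m)\times B_d)}$ with the stated logarithmic factors. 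On the far part $\sigma \gtrsim d$, so $\sigma^{-N/2} \le C d^{-N/2}$ turns the weighted $L^1$ bounds into the global $L^\infty((0,t_m);L^2(\Om))$ terms carrying the factor $d^{-N/2}$. Finally, the commutators in which a derivative falls on the cutoff, estimated by superapproximation together with $\|\nabla\omega\|_{L^\infty} \lesssim d^{-1}$, generate exactly the value terms $d^{-1}\|u-\chi\|$ on $B_d$ and $d^{-N/2-1}\|u-\chi\|_{L^\infty((0,t_m);L^2(\Om))}$ in the far region. Taking the infimum over $\chi \in \Xkh$ then yields the theorem.

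The main obstacle will be the localization step, because the operators $P_h$, $\Delta_h$ and $\Delta_h^{-1}$ entering the reduction are nonlocal and do not commute with the cutoff $\omega$. One must establish the quantitative spatial decay of the discrete Green's function $g_{kh}$ away from $x_0$ in the weighted norms and show that every commutator $[\omega, P_h]$, $[\omega, \Delta_h^{-1}]$ produced by the splitting is of lower order in $h/d$, so that the corresponding error terms fit into the stated $d^{-1}$ and $d^{-N/2}$ structure rather than contaminating the leading local best-approximation term; keeping the sharp powers of $d$ and the correct powers of the logarithms through these superapproximation estimates is the delicate part.
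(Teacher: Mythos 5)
Your overall route -- duality against a regularized discrete Green's function, a cutoff splitting, weighted discrete maximal parabolic regularity driven by the resolvent estimate of Theorem~\ref{thm: very weighted_Resolvent}, and the observation that $\sigma\gtrsim d$ on the far region converts weighted bounds into $d^{-N/2}$ times global $L^2$ norms -- is exactly the paper's. The far-field half of your argument is essentially the paper's estimate of $B((1-\om)u,\tilde g_{kh})$, including the commutator terms $u\nabla\omega$ that produce the $d^{-1}\|u-\chi\|$ contributions.

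The genuine gap is in your near-field term, precisely at the point you yourself flag as ``the main obstacle'' and then leave unresolved. After the cutoff splitting, the near part is $B(\omega(u-\chi),\tilde g_{kh})$, and to extract $\|\nabla(u-\chi)\|_{L^\infty((0,t_m)\times B_d)}$ from the $\partial_t\tilde g_{kh}$ and jump terms you must integrate by parts through $\Delta_h\Delta_h^{-1}$, which requires replacing $\omega(u-\chi)$ by a finite element function. Your proposed device, $P_h$ together with $(I-P_h)(\chi-u)\perp V_h$, does not give the identity you claim: $(\nabla P_h v,\nabla w_h)\neq(\nabla v,\nabla w_h)$ in general, so ``equivalently $\nabla(\chi-u)$'' is not justified; the exact identity holds only for the Ritz projection $R_h$, and then the remainder $(I-R_h)(\omega(u-\chi))$ is nonlocal and must be controlled. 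You correctly identify that the commutators $[\omega,P_h]$, $[\omega,\Delta_h^{-1}]$ are the delicate point, but you do not supply the estimate that closes them. The paper avoids this entirely by a different device: it introduces the auxiliary discrete solution $\tilde u_{kh}$ of the continuous problem with data $\tilde u=\omega u$, uses $B(\omega u,\tilde g_{kh})=B(\tilde u_{kh},\tilde g_{kh})=-D\tilde u_{kh}(\tilde t,x_0)$, and then invokes the already-proven global Theorem~\ref{thm:global_best_approx} for $\tilde u_{kh}$, which yields $C\ell_k\ell_h\|\nabla(\omega u)\|_{L^\infty(I\times\Om)}$ and hence the local terms with no commutator analysis at all. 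Separately, for the far-field remainders $((I-R_h)((1-\om)u),\cdot)$ the paper needs the local pointwise error estimates of Schatz--Wahlbin on $B_{d/2}$ (since $(I-R_h)\psi$ does not vanish where $\psi$ does); this ingredient is absent from your plan. Without either the global-theorem reduction for the near part or a worked-out superapproximation/commutator argument, and without the local Ritz estimates for the far part, the proposal as written does not close.
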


The proof of this theorem is given in Section~\ref{sec: proofs local results}.

\section{Elliptic estimates in weighted norms}\label{sec:elliptic}

In this section we collect some estimates for the finite element discretization of elliptic problems in weighted norms on convex polygonal/polyhedral domains mainly taken from~\cite{LeykekhmanD_VexlerB_2016c} . These results will be used in the following sections within the proofs of Theorem~\ref{thm: very weighted_Resolvent}, Theorem~\ref{thm:global_best_approx}, and Theorem~\ref{thm:local_best_approx}.

In this section we consider a fixed (but arbitrary) point $x_0 \in \Omega$. Associated to this point we introduce a smoothed delta function \cite[~Appendix]{AHSchatz_LBWahlbin_1995a}, which we will denote by $\tilde{\delta}$. This function is supported in one cell, which is denoted by $\tau_{0}$ with $x_0 \in \bar \tau_0$, and satisfies
\begin{equation}\label{eq: definition delta}
(\chi, \tilde{\delta})_{\tau_{0}}=\chi({x_0}), \quad \text{for all }\; \chi\in \Ppol{r}(\tau_{0}).
\end{equation}
In addition we also have, see, e.g.,~\cite[Lemma~2.2]{ThomeeV_WahlbinLB_2000a},
\begin{equation}\label{delta1}
 \|\tilde{\delta}\|_{W^{s,p}(\Om)} \le C h^{-s-N(1-\frac{1}{p})}, \quad 1\le p \le \infty, \quad s=0,1,2.
\end{equation}
Thus in particular $\|\tilde{\delta}\|_{L^1(\Om)} \le C$, $\norm{\tilde{\delta}}_{L^2(\Om)} \le Ch^{-\frac{N}{2}}$, and  $\|\tilde{\delta}\|_{L^\infty(\Om)} \le Ch^{-N}$.
Next we introduce a weight function
\begin{equation}\label{eq: sigma weight}
\sigma(x) = \sqrt{|x-x_0|^2+K^2h^2},
\end{equation}
where $K>0$ is a sufficiently large constant.  This weight function was first introduced in \cite{NattererF_1975, NitscheJA_1971a} to analyze pointwise finite element error estimates.
One can easily check that $\sigma$ satisfies the following properties:
\begin{subequations}
\begin{align}
\norm{\sigma^{-\frac{N}{2}}}_{L^2(\Om)}&\le C\lh^{\frac{1}{2}}, \label{eq: property 1 of sigma}\\
|\na \sigma|&\le C, \label{eq: property 2 of sigma}\\
|\na^2 \sigma|&\le C\sigma^{-1}, \label{eq: property 3 of sigma}\\
\max_\tau{\sigma}&\le C\min_\tau{\sigma} \quad \text{for all }\; \tau\in \Th \label{eq: property 4 of sigma}.
\end{align}
\end{subequations}
For the finite element space $V_h$ we will utilize the $L^2$ projection $P_h \colon L^2(\Omega) \to V_h$ defined by
\begin{equation}\label{eq:l2_proj}
(P_hv,\chi)_{\Om} = (v,\chi)_{\Om} \quad \text{for all }\; \chi\in V_h,
\end{equation}
the Ritz projection $R_h \colon H^1_0(\Omega) \to V_h$ defined by
\begin{equation}\label{eq:Ritz_proj}
(\nabla R_hv,\nabla \chi)_{\Om} = (\nabla v,\nabla \chi)_{\Om} \quad \text{for all }\; \chi\in V_h,
\end{equation}
and the usual nodal interpolation operator $i_h \colon C_0(\Omega) \to V_h$  with usual approximation properties (cf., e.\,g.,~\cite[Theorem~3.1.5]{PGCiarlet_1978a})
\begin{equation}\label{eq:I_h approximation}
\|u-i_hu\|_{L^q(\Om)}\le Ch^{2+N(\frac{1}{q}-\frac{1}{p})}\|u\|_{W^{2,p}(\Om)}, \quad \text{for}\quad q\geq p > \frac{N}{2},
\end{equation}
as well as the Scott-Zhang interpolation operator $i^{SZ}_h \colon W^{1,1}_0(\Omega) \to V_h$
with the approximation properties (cf., e.\,g.,~\cite{ScottLR_ZhangS_1990}) for $N=3$:
\begin{equation}\label{eq:I_h approximation SZ}
h\|\nabla(u-i^{SZ}_hu)\|_{L^2(\Om)} + \|u-i^{SZ}_hu\|_{L^2(\Om)}\le Ch^{\frac{3}{2}}\|u\|_{W^{2,\frac{3}{2}}(\Om)}\quad \text{for all } u \in W^{2,\frac{3}{2}}(\Omega) \cap W^{1,1}_0(\Omega).
\end{equation}
Moreover we introduce the discrete Laplace operator $\Delta_h \colon V_h \to V_h$ defined by
\begin{equation}\label{eq:discreteLaplace}
(-\Delta_h v_h,\chi)_{\Om} = (\nabla v_h,\nabla \chi)_{\Om}, \quad \text{for all }\; \chi\in V_h.
\end{equation}

The next lemma states an approximation result for the Ritz projection in the $L^\infty(\Omega)$ norm.
\begin{lemma}\label{lemma: approximation of Ritz}
There exists a constant $C>0$ independent of $h$, such that
$$
\|v-R_hv\|_{L^\infty(\Om)}\le Ch\lh\|\na v\|_{L^\infty(\Om)}.
$$
\end{lemma}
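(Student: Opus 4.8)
The plan is to reduce the bound to two standard facts about the conforming spaces $V_h$ on the quasi-uniform meshes $\Th$: the maximum norm stability of the Ritz projection on convex polygonal/polyhedral domains, and a first order interpolation estimate. Since $\na v\in L^\infty(\Om)$ forces $v\in C(\bar\Om)$, its nodal interpolant $i_h v\in V_h$ is well defined and can be used as a comparison element.

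First I would use that $R_h$ is a projection onto $V_h$, so $R_h i_h v=i_h v$, and write
\[
v-R_h v=(v-i_h v)-R_h(v-i_h v),
\]
which, after the triangle inequality, reduces the task to estimating $\|v-i_h v\|_{L^\infty(\Om)}$ and $\|R_h(v-i_h v)\|_{L^\infty(\Om)}$ separately. The interpolation term is controlled by the first order nodal estimate
\[
\|v-i_h v\|_{L^\infty(\Om)}\le Ch\|\na v\|_{L^\infty(\Om)},
\]
the lowest order analogue of \eqref{eq:I_h approximation}, valid for $v\in W^{1,\infty}(\Om)$ (cf.\ \cite[Theorem~3.1.5]{PGCiarlet_1978a}). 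For the remaining term I would invoke the $L^\infty(\Om)$ stability of the Ritz projection on convex polygonal/polyhedral domains,
\[
\|R_h w\|_{L^\infty(\Om)}\le C\lh\,\|w\|_{L^\infty(\Om)},
\]
which carries a single logarithmic factor in the lowest order case $r=1$ and is log free for $r\ge2$ (this is equivalent to the $L^\infty$ best approximation property of $R_h$ and follows from the maximum norm / weighted Green's function estimates of \cite{RRannacher_RScott_1982, JGuzman_DLeykekhman_JRossmann_AHSchatz_2009a}). Taking $w=v-i_h v$ and combining the two bounds gives
\[
\|v-R_h v\|_{L^\infty(\Om)}\le(1+C\lh)\|v-i_h v\|_{L^\infty(\Om)}\le Ch\lh\|\na v\|_{L^\infty(\Om)},
\]
as claimed.

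The only nonelementary ingredient is the maximum norm stability of $R_h$, and this is where the entire difficulty sits: it rests on sharp pointwise estimates for the discrete Green's function on domains with convex corners and edges, precisely the elliptic results recalled in the introduction. Everything else is a two line manipulation exploiting that $R_h$ reproduces $V_h$ together with a first order interpolation estimate. I would only need to double check that the single power of $\lh$ is uniform in the polynomial degree $r\ge1$, which holds since the logarithm is required solely in the lowest order case.
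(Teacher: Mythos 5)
Your proposal is correct and matches the paper's intended argument: the paper does not write out a proof but states that the lemma follows from the $L^\infty(\Om)$ stability of the Ritz projection (citing \cite{LeykekhmanD_VexlerB_2016c} for convex polyhedra and earlier works for polygons and smooth domains), which is exactly the reduction you perform via $v-R_hv=(I-R_h)(v-i_hv)$ and the first order nodal interpolation estimate. The only minor caution is that on convex polyhedral domains the cited stability result carries the factor $\lh$ for all polynomial degrees, so one should not rely on the log free claim for $r\ge 2$ there; this does not affect the stated bound.
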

 For smooth domains such a result was established in~\cite{NitscheJ_1975a, AHSchatz_LBWahlbin_1977a, AHSchatz_LBWahlbin_1982a} (logfree for higher order elements), for polygonal domains in~\cite{AHSchatz_1980a} and~\cite[Theorem 3.2]{ErikssonK_1994a} (for mildly graded meshes), and for convex polyhedral domains it follows  from stability of the Ritz projection in the $L^\infty(\Omega)$ norm in~\cite[Theorem 12]{LeykekhmanD_VexlerB_2016c}.

The following lemma is a superapproximation result in weighted norms.
\begin{lemma}[Lemma 3 in~\cite{LeykekhmanD_VexlerB_2016c}]\label{lemma:super_ih_ph}
Let $v_h \in V_h$. Then the following estimates hold for any $\alpha,\beta \in \R$ and $K$ (in the definition~\eqref{eq: sigma weight} of the weight $\sigma$) large enough:
\begin{subequations}
\begin{equation}\label{sigma_est_ih3_ih4}
\norm{\sigma^\alpha(\Id-i_h)(\sigma^\beta v_h)}_{L^2(\Om)} + h \norm{\sigma^\alpha\nabla(\Id-i_h)(\sigma^\beta v_h)}_{L^2(\Om)} \le c h \norm{\sigma^{\alpha+\beta -1} v_h}_{L^2(\Om)},
\end{equation}
\begin{equation}\label{sigma_est_ph3_ph4}
\norm{\sigma^\alpha(\Id-P_h)(\sigma^\beta v_h)}_{L^2(\Om)}  + h \norm{\sigma^\alpha\nabla(\Id-P_h)(\sigma^\beta v_h)}_{L^2(\Om)} \le c h \norm{\sigma^{\alpha+\beta -1} v_h}_{L^2(\Om)}.
\end{equation}
\end{subequations}
\end{lemma}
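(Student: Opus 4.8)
The plan is to prove both estimates by localizing to each cell $\tau \in \Th$, exploiting that $i_h$ reproduces polynomials of degree $r$ (so the troublesome polynomial factor $v_h$ can be subtracted off), and then to reduce the $P_h$ estimates to the $i_h$ estimates. Throughout I would use that, by \eqref{eq: property 4 of sigma}, for $K$ large the weight $\sigma$ is comparable to a constant $\bar\sigma_\tau := \sigma(x_\tau)$ on each $\tau$, so that powers of $\sigma$ may be pulled out of local $L^2$ norms up to a fixed factor. Recall also that $\sigma$ is smooth everywhere, since $\sigma(x_0)=Kh>0$.

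First I would establish the interpolation estimate \eqref{sigma_est_ih3_ih4}. Fix $\tau$ and observe that $\bar\sigma_\tau^\beta v_h|_\tau \in \Ppol{r}(\tau)$, hence $(\Id - i_h)(\bar\sigma_\tau^\beta v_h) = 0$ on $\tau$. Therefore $(\Id - i_h)(\sigma^\beta v_h) = (\Id - i_h)\big((\sigma^\beta - \bar\sigma_\tau^\beta) v_h\big)$ on $\tau$, and standard local interpolation error bounds give
\[
\norm{(\Id - i_h)(\sigma^\beta v_h)}_{L^2(\tau)} + h \norm{\na(\Id - i_h)(\sigma^\beta v_h)}_{L^2(\tau)} \le C h^2 \, \big| (\sigma^\beta - \bar\sigma_\tau^\beta) v_h \big|_{H^2(\tau)}.
\]
Expanding the right-hand side by the Leibniz rule and using \eqref{eq: property 2 of sigma}, \eqref{eq: property 3 of sigma} yields $|\na\sigma^\beta| \le C\sigma^{\beta-1}$, $|\na^2 \sigma^\beta|\le C\sigma^{\beta-2}$, and $|\sigma^\beta - \bar\sigma_\tau^\beta| \le C h \bar\sigma_\tau^{\beta-1}$ on $\tau$. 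Combining these with the inverse estimates $\norm{\na v_h}_{L^2(\tau)} \le C h^{-1}\norm{v_h}_{L^2(\tau)}$ and $\norm{\na^2 v_h}_{L^2(\tau)} \le C h^{-2}\norm{v_h}_{L^2(\tau)}$, and invoking $\sigma \ge Kh$ to absorb the lower-order factor $\sigma^{\beta-2}$ into $\sigma^{\beta-1}h^{-1}$ (this is where $K$ large enters), one gets $| (\sigma^\beta - \bar\sigma_\tau^\beta) v_h |_{H^2(\tau)} \le C \bar\sigma_\tau^{\beta-1} h^{-1}\norm{v_h}_{L^2(\tau)}$. Multiplying by the cell-constant weight $\bar\sigma_\tau^\alpha \approx \sigma^\alpha$ produces, on each cell,
\[
\norm{\sigma^\alpha(\Id - i_h)(\sigma^\beta v_h)}_{L^2(\tau)} + h\norm{\sigma^\alpha\na(\Id - i_h)(\sigma^\beta v_h)}_{L^2(\tau)} \le C h \norm{\sigma^{\alpha+\beta-1} v_h}_{L^2(\tau)},
\]
and squaring and summing over $\tau \in \Th$ gives \eqref{sigma_est_ih3_ih4}.

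For the $L^2$-projection estimate \eqref{sigma_est_ph3_ph4} I would reduce to the interpolation case. Since $P_h$ is the identity on $V_h$ and $i_h(\sigma^\beta v_h) \in V_h$, we have $(\Id - P_h)\,i_h(\sigma^\beta v_h) = 0$, hence $(\Id - P_h)(\sigma^\beta v_h) = (\Id - P_h)\eta$ with $\eta := (\Id - i_h)(\sigma^\beta v_h)$. Writing $(\Id - P_h)\eta = \eta - P_h\eta$, the contribution of $\eta$ is already controlled by \eqref{sigma_est_ih3_ih4}. For $P_h\eta \in V_h$, the $L^2$ part follows from the weighted $L^2$-stability $\norm{\sigma^\alpha P_h g}_{L^2(\Om)} \le C\norm{\sigma^\alpha g}_{L^2(\Om)}$ applied with $g=\eta$, while the gradient part follows from the weighted inverse estimate $\norm{\sigma^\alpha \na P_h\eta}_{L^2(\Om)} \le C h^{-1}\norm{\sigma^\alpha P_h\eta}_{L^2(\Om)}$ (legitimate since $P_h\eta \in V_h$ and $\sigma$ is cell-wise comparable), again followed by weighted stability. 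In each case one arrives at $C h \norm{\sigma^{\alpha+\beta-1} v_h}_{L^2(\Om)}$, proving \eqref{sigma_est_ph3_ph4}.

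The \emph{main obstacle} is precisely the weighted $L^2$-stability of $P_h$: unlike $i_h$, the projection $P_h$ is a global operator, so the cellwise subtraction trick is unavailable and one cannot argue purely locally. This stability does hold for the present slowly varying weight that is bounded below by $Kh$ on quasi-uniform meshes, but it is itself the delicate ingredient, established through the (super-)exponential decay of the influence function of the $L^2$ projection; I would invoke it as a separately proven weighted stability result for $P_h$ rather than reprove it inside this lemma.
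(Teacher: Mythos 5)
The paper does not prove this lemma --- it is imported verbatim as Lemma~3 of \cite{LeykekhmanD_VexlerB_2016c} --- so there is no internal proof to compare against; judged on its own, your argument is correct and is essentially the standard superapproximation proof used in that reference. The cell-wise subtraction of $\bar\sigma_\tau^\beta v_h$, the Leibniz expansion with $|\nabla\sigma^\beta|\le C\sigma^{\beta-1}$, $|\nabla^2\sigma^\beta|\le C\sigma^{\beta-2}$, $|\sigma^\beta-\bar\sigma_\tau^\beta|\le Ch\bar\sigma_\tau^{\beta-1}$, the local inverse estimates, and the lower bound $\sigma\ge Kh$ together give \eqref{sigma_est_ih3_ih4}, and the reduction of \eqref{sigma_est_ph3_ph4} to \eqref{sigma_est_ih3_ih4} via $(\Id-P_h)(\sigma^\beta v_h)=(\Id-P_h)(\Id-i_h)(\sigma^\beta v_h)$ is exactly right. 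You correctly isolate the one genuinely nonlocal ingredient: the weighted $L^2$-stability $\norm{\sigma^\alpha P_h g}_{L^2(\Om)}\le C\norm{\sigma^\alpha g}_{L^2(\Om)}$, which rests on the exponential decay of the $L^2$ projection on quasi-uniform meshes together with the slow variation \eqref{eq: property 4 of sigma} of $\sigma$ for $K$ large; this is precisely where the hypothesis ``$K$ large enough'' is consumed, and invoking it as a separately established result (it is proved in \cite{LeykekhmanD_VexlerB_2016c}) is legitimate.
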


The next lemma describes a connection between the regularized delta function $\tilde \delta$ and the weight $\sigma$.
\begin{lemma}\label{lemma:sigma_delta}
There hold
\begin{equation}\label{eq:sigma_delta}
\norm{\sigma^{\frac{N}{2}} \tilde \delta}_{L^2(\Om)} + \norm{\sigma^{\frac{N+2}{2}} \na\tilde \delta}_{L^2(\Om)} + h\norm{\sigma^{\frac{N}{2}} \na\tilde \delta}_{L^2(\Om)} \le C
\end{equation}
and
\begin{equation}\label{eq:sigma_delta_Ph}
 \norm{\sigma^{\frac{N}{2}} P_h\tilde \delta}_{L^2(\Om)} + \norm{\sigma^{\frac{N+2}{2}} P_h \na\tilde \delta}_{L^2(\Om)}+ h\norm{\sigma^{\frac{N}{2}} P_h \na\tilde \delta}_{L^2(\Om)}\le C.
\end{equation}
\end{lemma}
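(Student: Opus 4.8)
The plan is to estimate each of the three weighted quantities in~\eqref{eq:sigma_delta} separately, exploiting the support and scaling properties of $\tilde\delta$, and then to transfer the bounds to $P_h\tilde\delta$ in~\eqref{eq:sigma_delta_Ph} using the superapproximation result of Lemma~\ref{lemma:super_ih_ph}. The key structural fact is that $\tilde\delta$ is supported in the single cell $\tau_0$ containing $x_0$, so on $\operatorname{supp}\tilde\delta$ the weight $\sigma$ is comparable to $Kh$: indeed $|x-x_0|\le h_{\tau_0}\le h$ there, so $\sigma(x)=\sqrt{|x-x_0|^2+K^2h^2}\le C h$, and also $\sigma(x)\ge Kh$, whence $\sigma \sim h$ on $\tau_0$.

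First I would treat the three terms of~\eqref{eq:sigma_delta}. For the first term, since $\sigma^{\frac N2}\le Ch^{\frac N2}$ on $\tau_0$, I factor it out and use the $L^2$ bound on $\tilde\delta$ from~\eqref{delta1}:
\[
\norm{\sigma^{\frac N2}\tilde\delta}_{L^2(\Om)}
\le Ch^{\frac N2}\norm{\tilde\delta}_{L^2(\Om)}
\le Ch^{\frac N2}\cdot Ch^{-\frac N2}=C.
\]
For the third term I proceed identically, pulling out $\sigma^{\frac N2}\le Ch^{\frac N2}$ and invoking the $W^{1,2}$ bound $\norm{\na\tilde\delta}_{L^2(\Om)}\le Ch^{-1-\frac N2}$ from~\eqref{delta1} with $s=1,\ p=2$:
\[
h\norm{\sigma^{\frac N2}\na\tilde\delta}_{L^2(\Om)}
\le Ch\cdot h^{\frac N2}\cdot h^{-1-\frac N2}=C.
\]
The second term is analogous, now with the heavier weight $\sigma^{\frac{N+2}{2}}\le Ch^{\frac{N+2}{2}}=Ch^{\frac N2+1}$, which exactly supplies the extra factor of $h$ needed to absorb the derivative bound:
\[
\norm{\sigma^{\frac{N+2}{2}}\na\tilde\delta}_{L^2(\Om)}
\le Ch^{\frac N2+1}\norm{\na\tilde\delta}_{L^2(\Om)}
\le Ch^{\frac N2+1}\cdot h^{-1-\frac N2}=C.
\]

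Next I would establish~\eqref{eq:sigma_delta_Ph} by reducing it to~\eqref{eq:sigma_delta}. Writing $P_h\tilde\delta=\tilde\delta-(\Id-P_h)\tilde\delta$, I bound each weighted norm of $P_h\tilde\delta$ by the corresponding norm of $\tilde\delta$ (already controlled above) plus a term involving $(\Id-P_h)\tilde\delta$. For the first term of~\eqref{eq:sigma_delta_Ph} the plan is to apply~\eqref{sigma_est_ph3_ph4} with a suitable choice of exponents $\alpha,\beta$ (choosing $\beta$ so that $\sigma^\beta\tilde\delta$ plays the role of the interior function and $\alpha+\beta=\frac N2$), yielding a factor $h$ against a lower-weighted norm of $\tilde\delta$ that is again estimated as in the first paragraph; the extra $h$ makes this term harmless. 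The terms involving $P_h\na\tilde\delta$ require a little more care because $P_h$ does not commute with $\na$, so I would first note that $\na\tilde\delta$ is still supported in $\tau_0$ and apply~\eqref{sigma_est_ph3_ph4} to $\na\tilde\delta$ directly, together with the $L^2$-stability of $P_h$ on the leftover pieces. The main obstacle is precisely this bookkeeping of the weight exponents and the noncommutativity of $P_h$ with the gradient: one must verify that every application of the superapproximation estimate~\eqref{sigma_est_ph3_ph4} reduces the weighted norm to one already bounded in~\eqref{eq:sigma_delta}, and that the single-cell support of $\tilde\delta$ (hence $\sigma\sim h$) is used consistently so that all powers of $h$ cancel to give the clean constant $C$. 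Once the exponents are matched, each estimate collapses to the scaling computations above.
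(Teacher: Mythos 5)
Your treatment of \eqref{eq:sigma_delta} is correct and complete: since $\tilde\delta$ and $\na\tilde\delta$ are supported in the single cell $\tau_0$, where $Kh\le\sigma\le Ch$, all three terms reduce to the scaling bounds \eqref{delta1}. This is precisely the elementary argument the paper has in mind (the paper itself only cites \cite{ErikssonK_JohnsonC_1995a} and \cite[Lemma 4]{LeykekhmanD_VexlerB_2016c} for the first two terms and calls the remaining ones ``straightforward''), so this half is fine.

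The gap is in your derivation of \eqref{eq:sigma_delta_Ph}. Lemma~\ref{lemma:super_ih_ph} is a superapproximation result stated for $v_h\in V_h$ only; its proof rests on the fact that $\sigma^\beta v_h$ is, on each cell, a smooth weight times a polynomial of degree at most $r$, so the local interpolation/projection error only sees derivatives of $\sigma^\beta$ together with inverse estimates on $v_h$. The functions $\tilde\delta$ and $\na\tilde\delta$ are \emph{not} finite element functions --- they are smooth bumps supported in $\tau_0$, not elements of $V_h$ --- so you cannot put them in the slot of $v_h$ (or of $\sigma^\beta v_h$) in \eqref{sigma_est_ph3_ph4}; note that for $\beta=0$ that estimate is vacuous on $V_h$ (since $(\Id-P_h)v_h=0$) and carries no information whatsoever about $(\Id-P_h)\tilde\delta$. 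Moreover, $P_h\tilde\delta$ is not supported in $\tau_0$, so the comparability $\sigma\sim h$ that drives your first-half computation is no longer available for the projected functions. The standard tool here is instead the weighted $L^2$-stability of the $L^2$ projection, $\norm{\sigma^{\gamma}P_h v}_{L^2(\Om)}\le C\norm{\sigma^{\gamma}v}_{L^2(\Om)}$ for the relevant exponents $\gamma\in\{\tfrac{N}{2},\tfrac{N+2}{2}\}$, valid on quasi-uniform meshes for $K$ large enough thanks to property \eqref{eq: property 4 of sigma} and the exponential decay of $P_h$ away from the support of its argument; with that stability in hand, every term of \eqref{eq:sigma_delta_Ph} follows at once from the corresponding term of \eqref{eq:sigma_delta}. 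Your own phrasing (``one must verify that every application \dots'') concedes that this bookkeeping was not carried out; as written, the second estimate is not proved.
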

The proof of the first two terms in~\eqref{eq:sigma_delta} and~\eqref{eq:sigma_delta_Ph} respectively can be found in~\cite{ErikssonK_JohnsonC_1995a} for $N=2$ and in~\cite[Lemma 4]{LeykekhmanD_VexlerB_2016c} for $N=3$. Using similar arguments it is straightforward to show the result for the other terms.

The following two lemmas provide the flexibility in manipulating weighted norms.

\begin{lemma}\label{lemma:sigma_nabla}
For each $\alpha \in \R$, there is a constant $C>0$ such that for any $v \in H^1_0(\Omega)\cap H^2(\Omega)$ there holds
\[
\ltwonorm{\sigma^\alpha \nabla v} \le C \left( \ltwonorm{\sigma^{\alpha+1}\Delta v} + \ltwonorm{\sigma^{\alpha-1}v}\right).
\]
\end{lemma}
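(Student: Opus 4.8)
The plan is to prove this via a weighted integration by parts, testing against $v$ itself. I would start from the identity
\[
\ltwonorm{\sigma^\alpha \nabla v}^2 = \int_\Omega \sigma^{2\alpha}\,\nabla v\cdot\nabla v\,dx,
\]
and integrate by parts. Since $v\in H^1_0(\Omega)\cap H^2(\Omega)$, the trace of $v$ vanishes on $\partial\Omega$, so the boundary term drops out; and because $\sigma(x)=\sqrt{|x-x_0|^2+K^2h^2}\ge Kh>0$ is bounded and smooth on $\bar\Omega$, the product $\sigma^{2\alpha}\nabla v$ lies in $H^1(\Omega)^N$ and the integration by parts is legitimate. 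This yields
\[
\ltwonorm{\sigma^\alpha \nabla v}^2 = -\int_\Omega v\,\nabla\cdot\big(\sigma^{2\alpha}\nabla v\big)\,dx = -\int_\Omega \sigma^{2\alpha}\,v\,\Delta v\,dx - 2\alpha\int_\Omega \sigma^{2\alpha-1}\,v\,\nabla\sigma\cdot\nabla v\,dx.
\]

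Next I would estimate the two terms on the right separately. For the first term I split the weight as $\sigma^{2\alpha}=\sigma^{\alpha-1}\cdot\sigma^{\alpha+1}$ and apply Cauchy--Schwarz, obtaining the bound $\ltwonorm{\sigma^{\alpha-1}v}\,\ltwonorm{\sigma^{\alpha+1}\Delta v}$, which is already of the desired shape. For the second term the key input is property~\eqref{eq: property 2 of sigma}, namely $|\nabla\sigma|\le C$; writing the integrand as $(\sigma^{\alpha-1}|v|)(\sigma^{\alpha}|\nabla v|)$ and again using Cauchy--Schwarz gives the bound $2|\alpha|C\,\ltwonorm{\sigma^{\alpha-1}v}\,\ltwonorm{\sigma^{\alpha}\nabla v}$.

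The only step that is not purely cosmetic is the absorption: the gradient norm $\ltwonorm{\sigma^{\alpha}\nabla v}$ reappears on the right through the second term, so I would invoke Young's inequality in the form $ab\le \tfrac14\,\epsilon^{-1}a^2+\epsilon\, b^2$ on both products to move a fraction (say one half) of $\ltwonorm{\sigma^{\alpha}\nabla v}^2$ back to the left-hand side. After this rearrangement one arrives at
\[
\tfrac12\,\ltwonorm{\sigma^\alpha \nabla v}^2 \le C\big(\ltwonorm{\sigma^{\alpha+1}\Delta v}^2 + \ltwonorm{\sigma^{\alpha-1}v}^2\big),
\]
and taking square roots (using $\sqrt{a^2+b^2}\le a+b$ for the right-hand side) gives exactly the claimed inequality, with the constant $C$ depending only on $\alpha$ and on the geometric constant in $|\nabla\sigma|\le C$, hence independent of $h$.

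I do not anticipate a genuine obstacle here; the estimate is a weighted analogue of the elementary identity $\|\nabla v\|_{L^2}^2=-(v,\Delta v)$. The points that require a little care, rather than difficulty, are verifying that the integration by parts has no boundary contribution (guaranteed by $v\in H^1_0(\Omega)$) and that the weight is admissible in the integration by parts (guaranteed by $\sigma\ge Kh>0$, so that $\sigma^{2\alpha}$ is smooth for every real $\alpha$), together with the absorption of the cross term, which works for all $\alpha\in\R$ precisely because the constant in $|\nabla\sigma|\le C$ is independent of $h$.
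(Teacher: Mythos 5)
Your proof is correct and follows essentially the same route as the paper: the identity $\ltwonorm{\sigma^\alpha\nabla v}^2=-(\sigma^{\alpha-1}v,\sigma^{\alpha+1}\Delta v)-2\alpha(v\sigma^{\alpha-1}\nabla\sigma,\sigma^\alpha\nabla v)$ obtained by weighted integration by parts, followed by Cauchy--Schwarz with $\abs{\nabla\sigma}\le C$ and absorption of the gradient term. The only difference is cosmetic (you write the divergence form directly and invoke Young's inequality explicitly where the paper simply says ``absorbing''), and your extra remarks on the admissibility of the integration by parts are sound.
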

\begin{proof}
There holds
\[
\begin{aligned}
\ltwonorm{\sigma^\alpha \nabla v}^2 &= (\sigma^{2\alpha} \nabla v, \nabla v)
= (\nabla(\sigma^{2\alpha} v),\nabla v) - 2 \alpha (v \sigma^{2\alpha-1}\nabla \sigma, \nabla v)\\
&= -(\sigma^{\alpha-1} v, \sigma^{\alpha + 1}\Delta v) - 2 \alpha (v \sigma^{\alpha-1}\nabla \sigma, \sigma^{\alpha}\nabla v).
\end{aligned}
\]
Using $\abs{\nabla \sigma} \le C$ we obtain
\[
\ltwonorm{\sigma^\alpha \nabla v}^2 \le \ltwonorm{\sigma^{\alpha-1} v} \ltwonorm{\sigma^{\alpha + 1}\Delta v} + C \ltwonorm{\sigma^{\alpha-1} v} \ltwonorm{\sigma^\alpha \nabla v}.
\]
Absorbing $\ltwonorm{\sigma^\alpha \nabla v}$ we obtain the desired estimate.
\end{proof}

\begin{lemma}\label{lemma:sigma_nabla_h}
For each $\alpha \in \R$, there is a constant $C>0$ such that for any $v_h \in V_h$ there holds
\[
\ltwonorm{\sigma^\alpha \nabla v_h} \le C \left( \ltwonorm{\sigma^{\alpha+1}\Delta_h v_h} + \ltwonorm{\sigma^{\alpha-1}v_h}\right).
\]
\end{lemma}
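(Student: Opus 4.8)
The plan is to imitate the proof of Lemma~\ref{lemma:sigma_nabla}, replacing the integration by parts against $\Delta$ with the defining relation~\eqref{eq:discreteLaplace} of the discrete Laplacian $\Delta_h$. The difficulty is that the natural test function $\sigma^{2\alpha}v_h$ does not belong to $V_h$, so~\eqref{eq:discreteLaplace} cannot be used directly; the remedy is to insert the $L^2$-projection $P_h$ and to control the resulting commutator by the superapproximation estimate~\eqref{sigma_est_ph3_ph4}. Concretely, I would start from
\[
\ltwonorm{\sigma^\alpha\nabla v_h}^2 = (\sigma^{2\alpha}\nabla v_h, \nabla v_h)_\Om = (\nabla(\sigma^{2\alpha}v_h), \nabla v_h)_\Om - 2\alpha(\sigma^{2\alpha-1}(\nabla\sigma)v_h, \nabla v_h)_\Om,
\]
exactly as in the continuous case. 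The second term is bounded by $C\ltwonorm{\sigma^{\alpha-1}v_h}\ltwonorm{\sigma^\alpha\nabla v_h}$ using $|\nabla\sigma|\le C$ from~\eqref{eq: property 2 of sigma}, and will later be absorbed into the left-hand side.

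Then I would split the first term as
\[
(\nabla(\sigma^{2\alpha}v_h), \nabla v_h)_\Om = (\nabla P_h(\sigma^{2\alpha}v_h), \nabla v_h)_\Om + (\nabla(\Id - P_h)(\sigma^{2\alpha}v_h), \nabla v_h)_\Om.
\]
For the first piece, since $P_h(\sigma^{2\alpha}v_h) \in V_h$, the definition~\eqref{eq:discreteLaplace} together with the self-adjointness of $P_h$ and the fact that $-\Delta_h v_h \in V_h$ gives the exact identity
\[
(\nabla P_h(\sigma^{2\alpha}v_h), \nabla v_h)_\Om = (-\Delta_h v_h, P_h(\sigma^{2\alpha}v_h))_\Om = (-\Delta_h v_h, \sigma^{2\alpha}v_h)_\Om,
\]
which by Cauchy--Schwarz is bounded by $\ltwonorm{\sigma^{\alpha+1}\Delta_h v_h}\ltwonorm{\sigma^{\alpha-1}v_h}$, one of the two target terms. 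For the commutator piece I would write it as $(\sigma^{-\alpha}\nabla(\Id-P_h)(\sigma^{2\alpha}v_h), \sigma^\alpha\nabla v_h)_\Om$ and apply~\eqref{sigma_est_ph3_ph4} with $\alpha\mapsto -\alpha$ and $\beta\mapsto 2\alpha$; the factor $h$ in that estimate cancels against the gradient, yielding $\ltwonorm{\sigma^{-\alpha}\nabla(\Id-P_h)(\sigma^{2\alpha}v_h)}\le C\ltwonorm{\sigma^{\alpha-1}v_h}$ and hence a bound of the form $C\ltwonorm{\sigma^{\alpha-1}v_h}\ltwonorm{\sigma^\alpha\nabla v_h}$.

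Collecting the three contributions gives
\[
\ltwonorm{\sigma^\alpha\nabla v_h}^2 \le \ltwonorm{\sigma^{\alpha+1}\Delta_h v_h}\ltwonorm{\sigma^{\alpha-1}v_h} + C\ltwonorm{\sigma^{\alpha-1}v_h}\ltwonorm{\sigma^\alpha\nabla v_h},
\]
and the claim follows after absorbing $\ltwonorm{\sigma^\alpha\nabla v_h}$ on the left and applying Young's inequality to the remaining product. The only genuinely new ingredient beyond the continuous argument is the superapproximation step, which is precisely where the quasi-uniformity of the mesh and the largeness of $K$ in~\eqref{eq: sigma weight} enter; this is the step I expect to require the most care, namely checking that the weight exponents line up so that the gained power of $h$ exactly compensates the extra negative power of $\sigma$. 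The interpolation version~\eqref{sigma_est_ih3_ih4} could be used in place of $P_h$, at the cost of one additional term $(-\Delta_h v_h, (\Id-i_h)(\sigma^{2\alpha}v_h))_\Om$, which is handled by the same superapproximation estimate together with the pointwise bound $h\le \sigma/K$.
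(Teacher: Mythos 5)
Your proposal is correct and follows essentially the same route as the paper's proof: insert $P_h$, convert the projected term into $(-\Delta_h v_h,\sigma^{2\alpha}v_h)_\Om$ via~\eqref{eq:discreteLaplace}, control the commutator $(\sigma^{-\alpha}\nabla(\Id-P_h)(\sigma^{2\alpha}v_h),\sigma^\alpha\nabla v_h)_\Om$ by the superapproximation estimate~\eqref{sigma_est_ph3_ph4}, and absorb. The weight exponents line up exactly as you anticipated, so no further care is needed.
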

\begin{proof}
Similar to the proof of the previous lemma we have
\[
\begin{aligned}
\ltwonorm{\sigma^\alpha \nabla v_h}^2 &= (\sigma^{2\alpha} \nabla v_h, \nabla v_h)
= (\nabla(\sigma^{2\alpha} v_h),\nabla v_h) - 2 \alpha (v_h \sigma^{2\alpha-1}\nabla \sigma, \nabla v_h)\\
&= (\nabla P_h(\sigma^{2\alpha} v_h),\nabla v_h) + (\nabla(Id -P_h)(\sigma^{2\alpha} v_h),\nabla v_h) - 2 \alpha (v_h \sigma^{2\alpha-1}\nabla \sigma, \nabla v_h)\\
&= -(\sigma^{\alpha-1} v_h, \sigma^{\alpha + 1}\Delta_h v_h) + (\sigma^{-\alpha} \nabla(Id -P_h)(\sigma^{2\alpha} v_h),\sigma^{\alpha} \nabla v_h) - 2 \alpha (v_h \sigma^{\alpha-1}\nabla \sigma, \sigma^{\alpha}\nabla v_h).
\end{aligned}
\]
Applying Lemma~\ref{lemma:super_ih_ph} for the second term and using $\abs{\nabla \sigma} \le C$ we obtain
\[
\ltwonorm{\sigma^\alpha \nabla v_h}^2 \le \ltwonorm{\sigma^{\alpha-1} v_h} \ltwonorm{\sigma^{\alpha + 1}\Delta_h v_h} + C \ltwonorm{\sigma^{\alpha-1} v_h} \ltwonorm{\sigma^\alpha \nabla v_h}.
\]
Absorbing $\ltwonorm{\sigma^\alpha \nabla v_h}$ we obtain the desired estimate.
\end{proof}

In the following proofs we will make a heavy use of pointwise estimates for the Green's function.
\begin{lemma}                          \label{lemma: Green}
Let $G(x,y)$ denotes the elliptic Green's function of the Laplace operator on the domain $\Omega$. Then for $N=2,3$ the following estimates hold,
\begin{subequations}         \label{eq: Greens}
\begin{align}
|\na_x G(x,y)| &\le C |x-y|^{1-N},\quad \text{for all }\; x,y\in\Om,\quad x\neq y,\\
|\na_y G(x,y)| &\le C|x-y|^{1-N}, \quad \text{for all }\; x,y\in\Om,\quad x\neq y.\\
|\na_y \na_x G(x,y)| & \le C|x-y|^{-N}, \quad \text{for all }\; x,y\in\Om,\quad x\neq y.\label{eq:est3_green}
\end{align}
\end{subequations}
\end{lemma}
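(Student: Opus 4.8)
The plan is to reduce all three bounds to a single first-order estimate and then bootstrap. Because the Green's function is symmetric, $G(x,y)=G(y,x)$, the bounds for $\na_x G$ and $\na_y G$ are equivalent, so it suffices to establish $\abs{\na_x G(x,y)}\le C\abs{x-y}^{1-N}$; the mixed bound~\eqref{eq:est3_green} then follows by applying essentially the same argument once more. The engine throughout is an interior/boundary gradient estimate for harmonic functions on balls, fed by the classical pointwise bound on $G$ itself,
\[
\abs{G(x,y)}\le C\abs{x-y}^{2-N}\ (N=3),\qquad \abs{G(x,y)}\le C\bigl(1+\abs{\ln\abs{x-y}}\bigr)\ (N=2),
\]
which is available on convex polygonal/polyhedral domains.

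First I would prove the gradient bound. Fix $x,y\in\Om$ with $x\neq y$, put $d=\abs{x-y}$ and $r=d/4$. Since $y\notin\overline{B_r(x)}$, the function $v:=G(\cdot,y)$ is harmonic in $B_r(x)\cap\Om$, and every point of $B_r(x)$ is at distance at least $3d/4$ from $y$, so $\abs{v}\le Cd^{2-N}$ there (with a logarithm when $N=2$). If $B_r(x)\subset\Om$, the interior gradient estimate gives $\abs{\na v(x)}\le \frac{C}{r}\norm{v}_{L^\infty(B_r(x))}\le Cd^{1-N}$. If instead $\dist(x,\pa\Om)<r$, then $v$ is harmonic in $B_r(x)\cap\Om$ and vanishes on $\pa\Om\cap B_r(x)$, and the corresponding boundary gradient estimate $\abs{\na v(x)}\le \frac{C}{r}\norm{v}_{L^\infty(B_r(x)\cap\Om)}$ again yields $Cd^{1-N}$. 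The spurious logarithm in the crude $L^\infty$ input for $N=2$ is removed by the standard refinement (subtracting a constant so that $\na v$ is controlled by the $O(1)$ oscillation of $v$ over $B_r$, or equivalently by splitting off the explicit fundamental solution, whose gradient equals $C\abs{x-y}^{-1}$).

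To obtain~\eqref{eq:est3_green}, fix $y$ and set $W(x):=\na_y G(x,y)$. Differentiating $\De_x G(\cdot,y)=0$ (valid for $x\neq y$) and $G(x,y)=0$ for $x\in\pa\Om$ with respect to the interior parameter $y$ shows that each component of $W$ is harmonic in $x$ off the diagonal and vanishes on $\pa\Om$; moreover the gradient bound already proved gives $\abs{W(x)}\le C\abs{x-y}^{1-N}$. Running the scaling argument of the previous paragraph with $W$ in place of $v$ then produces
\[
\abs{\na_x W(x)}\le \frac{C}{d}\,\norm{W}_{L^\infty(B_r(x)\cap\Om)}\le \frac{C}{d}\,d^{1-N}=C\,d^{-N},
\]
which is exactly~\eqref{eq:est3_green}.

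The main obstacle is the boundary gradient estimate used in the second case, since near the corners and edges of a polygon or polyhedron the usual smooth-boundary regularity is not available. This is precisely where convexity is indispensable: on convex polygonal/polyhedral domains the leading corner and edge exponents of the Dirichlet Laplacian are at least one, so a harmonic function vanishing on the boundary stays Lipschitz up to $\pa\Om$, with a constant uniform in the location of the ball relative to the singular set. Granting this regularity, which is furnished by the elliptic theory on convex polyhedra cited above, the scaling and bootstrap arguments close and all three estimates follow.
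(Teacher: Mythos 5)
Your argument is correct in outline, but it is a genuinely different route from the paper's: the paper does not prove this lemma at all, it simply cites \cite[Prop.~1]{FrommSJ_1993a} for the first and third bounds and obtains the middle one from the symmetry $\abs{\na_y G(x,y)}=\abs{\na_x G(y,x)}$ (a step you also use). Your scheme --- interior/boundary gradient estimate on a ball of radius $\abs{x-y}/4$ fed by the pointwise bound on $G$, then a bootstrap applying the same estimate to $W=\na_y G(\cdot,y)$, which is harmonic off the diagonal, vanishes on $\pa\Om$, and already satisfies $\abs{W}\le C\abs{x-y}^{1-N}$ --- is a standard and sound way to organize the proof, and the bootstrap step for \eqref{eq:est3_green} is clean. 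What your approach buys is transparency; what it costs is that the entire difficulty is relocated into the scaled boundary estimate $\norm{\na v}_{L^\infty(B_{r/2}(x)\cap\Om)}\le Cr^{-1}\norm{v}_{L^\infty(B_{r}(x)\cap\Om)}$ with a constant uniform over the position of the ball relative to edges and vertices. That estimate is true on convex polytopes (edge exponents $\pi/\omega>1$ and vertex exponents $\lambda=-\frac12+\sqrt{\frac14+\mu}>1$ since the spherical cross-section of a convex cone lies strictly inside a hemisphere, so $\mu>2$), but it is essentially the same corner-regularity machinery that underlies Fromm's proof, so you have not made the lemma elementary --- you have identified, correctly, where the real work sits. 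The one place your sketch is too thin is the removal of the logarithm for $N=2$ in the boundary case: there you cannot subtract a constant without destroying the zero boundary data that the boundary gradient estimate needs, so the oscillation trick is unavailable; instead one should invoke the refined upper bound $G(z,y)\le C\ln\bigl(1+\delta(z)\delta(y)/\abs{z-y}^2\bigr)$ with $\delta=\dist(\cdot,\pa\Om)$, which closes the argument because in the boundary case $\delta(y)\le\delta(x)+\abs{x-y}\le \frac{5}{4}\abs{x-y}$ forces the logarithm to be $O(1)$ on the ball.
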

The proof of the first estimate  can be found in \cite[Prop~1]{FrommSJ_1993a} and the second one follows from the symmetry of the Green's function and the first estimate, i.e. $|\na_y G(x,y)|=|\na_x G(y,x)|\le  C |x-y|^{1-N}$. The third estimate is also proven in~\cite[Prop~1]{FrommSJ_1993a}.

The next lemma can be thought of as weighted Gagliardo-Nirenberg interpolation inequality.
\begin{lemma}[Lemma~5 in~\cite{LeykekhmanD_VexlerB_2016c}]\label{lemma: from weighted L2 to weighted in H1}
Let $N=3$. There exists a constant $C$ independent of $K$ and $h$ such that for any $f\in H^1_0(\Om)$, any $\alpha,\beta \in \mathbb{R}$ with $\alpha \ge -\frac{1}{2}$ and any $1\le p\le \infty$, $\frac{1}{p}+\frac{1}{p'}=1$ there holds:
$$
\|\sigma^{\alpha}f\|^2_{L^2(\Om)}\le C\|\sigma^{\alpha-\beta}f\|_{L^p(\Om)} \|\sigma^{\alpha + 1 +\beta}\na f\|_{L^{p'}(\Om)},
$$
provided $\|\sigma^{\alpha-\beta}f\|_{L^p(\Om)}$ and  $\|\sigma^{\alpha + 1 +\beta}\na f\|_{L^{p'}(\Om)}$ are bounded.
\end{lemma}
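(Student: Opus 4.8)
The plan is to exploit the special structure of the weight $\sigma$ in dimension $N=3$ through a Rellich--Pohozaev-type identity based on the radial vector field $F(x)=x-x_0$, for which $\na\cdot F = N = 3$. Starting from the trivial identity $\ltwonorm{\sigma^\alpha f}^2 = \tfrac1N\int_\Om \sigma^{2\alpha} f^2\,(\na\cdot F)\,dx$, I would integrate by parts. Since $f\in H^1_0(\Om)$ the boundary term vanishes (rigorously after approximating $f$ by functions in $C_c^\infty(\Om)$ and passing to the limit, which is legitimate because $\sigma$ and $F$ are bounded on the bounded domain $\Om$), leaving
\[
N\ltwonorm{\sigma^\alpha f}^2 = -\int_\Om \na(\sigma^{2\alpha} f^2)\cdot F\,dx = -2\alpha\, A - 2\,B,
\]
where $A=\int_\Om \sigma^{2\alpha-1}(\na\sigma\cdot F)\,f^2\,dx$ comes from differentiating the weight and $B=\int_\Om \sigma^{2\alpha} f\,(\na f\cdot F)\,dx$ carries the gradient of $f$.

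From the definition~\eqref{eq: sigma weight} one has $\na\sigma = (x-x_0)/\sigma = F/\sigma$, so $\na\sigma\cdot F = |x-x_0|^2/\sigma \ge 0$ and hence $A=\int_\Om \sigma^{2\alpha-2}|x-x_0|^2 f^2\,dx \ge 0$; moreover $|x-x_0|^2\le \sigma^2$ gives $A\le \ltwonorm{\sigma^\alpha f}^2$. The delicate point, and the main obstacle, is the sign bookkeeping of the $-2\alpha A$ term. For $\alpha\ge 0$ this term is nonpositive and is simply discarded, yielding $N\ltwonorm{\sigma^\alpha f}^2\le 2|B|$ at once. For $-\tfrac12\le\alpha<0$ it is nonnegative, and I would move it to the left-hand side using the bound $-2\alpha A\le 2|\alpha|\ltwonorm{\sigma^\alpha f}^2$, which produces the coefficient $N-2|\alpha| = N+2\alpha$. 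This is exactly where the hypotheses $N=3$ and $\alpha\ge-\tfrac12$ are used: together they guarantee $N+2\alpha\ge 2>0$, so the troublesome term is absorbed with a constant that is bounded away from zero and, crucially, independent of $K$ and $h$. (The same argument in fact works for any $\alpha>-N/2$.)

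It then remains to estimate $B$. Since $|x-x_0|\le\sigma$, I bound $|B|\le \int_\Om \sigma^{2\alpha+1}|f|\,|\na f|\,dx$ and split the weight by the exact exponent identity $\sigma^{2\alpha+1}=\sigma^{\alpha-\beta}\cdot\sigma^{\alpha+1+\beta}$. Hölder's inequality with the conjugate pair $(p,p')$ then gives
\[
|B|\le \norm{\sigma^{\alpha-\beta}f}_{L^p(\Om)}\,\norm{\sigma^{\alpha+1+\beta}\na f}_{L^{p'}(\Om)}.
\]
Combining this with the absorption step yields $\ltwonorm{\sigma^\alpha f}^2\le \tfrac{2}{N+2\alpha}\norm{\sigma^{\alpha-\beta}f}_{L^p(\Om)}\norm{\sigma^{\alpha+1+\beta}\na f}_{L^{p'}(\Om)}$, which is the claim with $C=2/(N+2\alpha)\le 1$. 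Because every estimate invoked (namely $|x-x_0|\le\sigma$, the sign $\na\sigma\cdot F\ge0$, and the lower bound on $N+2\alpha$) is insensitive to the regularization parameters, the resulting constant is independent of $K$ and $h$, as required.
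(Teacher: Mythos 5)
Your argument is correct: the divergence identity $\int_\Om \sigma^{2\alpha}f^2\,dx = -\tfrac1N\int_\Om \nabla(\sigma^{2\alpha}f^2)\cdot(x-x_0)\,dx$ (valid since $\sigma^{2\alpha}f^2\in W^{1,1}_0(\Om)$ because $\sigma$ is smooth and bounded away from zero), the sign and size control $0\le \nabla\sigma\cdot(x-x_0)=|x-x_0|^2/\sigma\le\sigma$, the absorption using $N+2\alpha\ge 2$, and the final H\"older split $\sigma^{2\alpha+1}=\sigma^{\alpha-\beta}\sigma^{\alpha+1+\beta}$ all check out and yield a constant independent of $K$ and $h$. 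The paper itself only cites this result (Lemma~5 of \cite{LeykekhmanD_VexlerB_2016c}) without reproducing a proof, but your Rellich--Pohozaev computation with the radial field $x-x_0=\tfrac12\nabla(\sigma^2)$ is precisely the standard argument used there, so this is essentially the same approach.
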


\begin{lemma}\label{lemma: inverse_delta derivative}
Let $D=\pa_{x_i}$, $i=1,\dots,N$ denote any partial derivative. Then for $N=2,3$ there holds
\begin{equation}\label{eq:inverse_delta derivative}
\norm{ \si^{\frac{N-2}{2}}\Delta^{-1}D\tilde \delta}_{L^2(\Om)} +\norm{ \si^{\frac{N}{2}}\na\Delta^{-1}D\tilde \delta}_{L^2(\Om)} \le C\lh^{\frac{1}{2}}
\end{equation}
and for $N=3$ there holds
\begin{equation}\label{eq:g_l3_w132}
\norm{\Delta^{-1}D\tilde \delta}_{L^3(\Omega)}  + \norm{\nabla \Delta^{-1}D\tilde \delta}_{L^{\frac{3}{2}}(\Omega)} \le C h^{-1}.
\end{equation}
\end{lemma}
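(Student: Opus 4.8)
The plan is to write $w=\Delta^{-1}D\tilde\delta$ and reduce every bound to a single pointwise estimate for $w$ together with the weighted identity of Lemma~\ref{lemma:sigma_nabla}. Integrating by parts in $y$ and using that the Dirichlet Green's function $G(x,\cdot)$ vanishes on $\partial\Omega$, I would represent $w(x)=\pm\int_\Omega \partial_{y_i}G(x,y)\,\tilde\delta(y)\,dy$, so that the first estimate of Lemma~\ref{lemma: Green} gives $|w(x)|\le C\int_\Omega|x-y|^{1-N}|\tilde\delta(y)|\,dy$. Splitting this integral at $|x-x_0|=2h$ --- using $|x-y|\sim|x-x_0|\sim\sigma(x)$ together with $\norm{\tilde\delta}_{L^1(\Omega)}\le C$ in the far region, and $\sigma\sim h$ together with $\norm{\tilde\delta}_{L^\infty(\Omega)}\le Ch^{-N}$ from \eqref{delta1} and $\int_{|x-y|\le Ch}|x-y|^{1-N}\,dy\le Ch$ in the near region --- I obtain the uniform pointwise bound $|w(x)|\le C\,\sigma(x)^{1-N}$, valid for $N=2,3$.

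For the first estimate I insert this bound directly into the first term: $\ltwonorm{\sigma^{\frac{N-2}{2}}w}^2\le C\int_\Omega\sigma^{-N}\,dx\le C\lh$ by \eqref{eq: property 1 of sigma}. For the gradient term I would avoid differentiating the representation (its kernel $|\nabla_x\nabla_y G|\sim|x-y|^{-N}$ fails to be locally integrable at $x_0$) and instead apply Lemma~\ref{lemma:sigma_nabla} with $\alpha=\frac N2$ to $v=w$, using $\Delta w=D\tilde\delta$, which bounds $\ltwonorm{\sigma^{\frac N2}\nabla w}$ by $C\bigl(\ltwonorm{\sigma^{\frac{N+2}{2}}\nabla\tilde\delta}+\ltwonorm{\sigma^{\frac{N-2}{2}}w}\bigr)$; the first term is controlled by Lemma~\ref{lemma:sigma_delta} and the second by the term just estimated. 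Both pieces are $O(\lh^{1/2})$, which gives the first claim.

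For the $N=3$ estimate the $L^3$ bound is immediate from the pointwise estimate, $\norm{w}_{L^3(\Omega)}^3\le C\int_\Omega\sigma^{-6}\,dx\le Ch^{-3}$, where the exponent $-6<-N$ renders the integral near-field dominated and, crucially, log-free. The genuinely delicate point --- and the \emph{main obstacle} --- is the log-free gradient bound $\norm{\nabla w}_{L^{3/2}(\Omega)}\le Ch^{-1}$: passing through the borderline weighted norm $\ltwonorm{\sigma^{N/2}\nabla w}$ (or through the pointwise, non-integrable second-gradient kernel) costs a spurious factor $\lh^{1/2}$. I would resolve this by Hölder with $\frac{1}{6}+\frac{1}{2}=\frac{2}{3}$, namely $\norm{\nabla w}_{L^{3/2}(\Omega)}\le\norm{\sigma^{-1}}_{L^6(\Omega)}\ltwonorm{\sigma\nabla w}$, and then estimate $\ltwonorm{\sigma\nabla w}$ by Lemma~\ref{lemma:sigma_nabla} with $\alpha=1$ (bounding $\ltwonorm{\sigma^2\nabla\tilde\delta}$ via \eqref{delta1} on the support of $\tilde\delta$, where $\sigma\sim h$, and $\ltwonorm{w}$ via the pointwise bound). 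Because $\alpha=1$ lies strictly between the two critical exponents $\frac12$ and $\frac32$, every weighted integral is dominated by its near-field $h$-power rather than by a logarithmic tail, so $\norm{\sigma^{-1}}_{L^6(\Omega)}\sim h^{-1/2}$ and $\ltwonorm{\sigma\nabla w}\le Ch^{-1/2}$ multiply to exactly $Ch^{-1}$.
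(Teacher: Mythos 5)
Your proposal is correct, and for most of the lemma it follows the paper's own route: the same Green's function representation after integrating by parts in $y$, the same pointwise bound (the paper's $|g|\le Ch^{1-N}$ on $B_{3h}(x_0)$ and $|g|\le C(|x-x_0|-h)^{1-N}$ outside are exactly your $|w|\le C\sigma^{1-N}$), the same use of \eqref{eq: property 1 of sigma} for the first term, the same application of Lemma~\ref{lemma:sigma_nabla} with $\alpha=\frac N2$ together with Lemma~\ref{lemma:sigma_delta} for the weighted gradient, and the same computation for $\norm{w}_{L^3}$. Where you genuinely depart from the paper is the log-free bound $\norm{\nabla w}_{L^{3/2}(\Omega)}\le Ch^{-1}$. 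The paper estimates $\nabla g$ pointwise: on $B_{3h}(x_0)$ it uses $\|\tilde\delta\|_{W^{1,\infty}}\le Ch^{-1-N}$ with the first Green's function bound to get $|\nabla g|\le Ch^{-N}$, and outside it integrates by parts once more and invokes the mixed second-derivative estimate \eqref{eq:est3_green}, $|\nabla_y\nabla_xG|\le C|x-y|^{-N}$, to get $|\nabla g(x)|\le C(|x-x_0|-h)^{-3}$, then computes the $L^{3/2}$ norm directly. You instead pass through the intermediate weighted norm $\ltwonorm{\sigma\nabla w}$ via H\"older ($\frac16+\frac12=\frac23$) and Lemma~\ref{lemma:sigma_nabla} with $\alpha=1$; the arithmetic checks out ($\norm{\sigma^{-1}}_{L^6}\le Ch^{-1/2}$ and $\ltwonorm{\sigma^2\nabla\tilde\delta}+\ltwonorm{w}\le Ch^{-1/2}$, both near-field dominated and hence log-free, since $\alpha=1$ sits strictly between the critical weights $\frac12$ and $\frac32$), and your diagnosis of why the borderline weight $\sigma^{3/2}$ would cost a spurious $\lh^{1/2}$ is accurate. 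Your version buys independence from the second-derivative Green's function estimate \eqref{eq:est3_green} and from the $W^{1,\infty}$ bound on $\tilde\delta$, at the mild price of invoking $w\in H^2(\Omega)\cap H^1_0(\Omega)$ (which holds here since $\Omega$ is convex and $D\tilde\delta\in L^2(\Omega)$) so that Lemma~\ref{lemma:sigma_nabla} applies; the paper's version is more self-containedly ``pointwise'' but needs the sharper kernel bound.
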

\begin{proof}
Consider the following elliptic problem
\begin{equation}\label{eq: elliptic derivative greens equation}
\begin{aligned}
-\Delta g(x) &= D\tilde \delta(x), & x &\in \Om,\;  \\
    g(x) &= 0,    & x &\in \pa\Omega.
\end{aligned}
\end{equation}
Thus, in order to obtain the estimate~\eqref{eq:inverse_delta derivative} we need to establish
$$
\norm{\si^{\frac{N-2}{2}}g}_{L^2(\Om)}+\norm{\si^{\frac{N}{2}}\na g}_{L^2(\Om)}\le C\lh^{\frac{1}{2}}.
$$
To estimate the first term, we will be using the following Green's function representation
\begin{equation}\label{eq: representation Greens derivative}
g(x)=\int_{\tau_0}G(x,y) \partial_{y_i}\tilde \delta(y)\ dy=-\int_{\tau_0} \partial_{y_i} G(x,y)\tilde \delta(y)\ dy.
\end{equation}
Define $B_h=B_{3h}(x_0)\cap\Om$ and $B^c_h=\Om\backslash B_h$ and consider two cases: $x\in B_h$ and  $x\in B^c_h$. In the case $x\in B_h$, we obtain using polar coordinates centered at $x$ and using ~\eqref{delta1}, ~\eqref{eq: representation Greens derivative}, and Lemma \ref{lemma: Green},
$$
\abs{g(x)}\le \|\tilde{\delta}\|_{L^\infty(\tau_0)}\int_{\tau_0}|\na_y G(x,y)|dy\le Ch^{-N}\int_{\tau_0}|x-y|^{1-N}\ dy\le Ch^{-N}\int_0^{ch} d\rho\le Ch^{1-N}.
$$
Hence by the H\"{o}lder inequality and using that $\si\le Ch$ on $B_h$, we have
$$
\norm{\si^{\frac{N-2}{2}} g}_{L^2(B_{h})}\le Ch^{\frac{N}{2}}h^{\frac{N-2}{2}}\norm{g}_{L^\infty(B_{h})}\le C.
$$
In the case $x\in B^c_h$, we have for any $y \in \tau_0$ by the triangle inequality
\[
\abs{x-y} \ge \abs{x-x_0} - \abs{y-x_0} \ge  \abs{x-x_0} -h
\]
and therefore again by~\eqref{eq: representation Greens derivative} and Lemma \ref{lemma: Green}
\[
\abs{g(x)}\le \|\tilde{\delta}\|_{L^1(\tau_0)}\frac{C}{(|x-x_0|-h)^{N-1}}\le \frac{C}{(|x-x_0|-h)^{N-1}}.
\]
Hence, using polar coordinates with $\rho=|x-x_0|$, we obtain
$$
\|\si^{\frac{N-2}{2}} g\|^2_{L^2(B^c_h)}\le C\int_{B^c_h}\frac{(|x-x_0|+Kh)^{N-2}}{(|x-x_0|-h)^{2N-2}}\ dx\le C\int_{3h}^{\diam(\Om)}\frac{(\rho+Kh)^{N-2}}{(\rho-h)^{2N-2}}\rho^{N-1}\ d\rho\le C\lh.
$$
Thus, we established
\begin{equation}\label{eq: esimate for si N g}
\norm{\si^{\frac{N-2}{2}} g}_{L^2(\Om)}\le C\lh^{\frac{1}{2}}.
\end{equation}
To estimate the second term in~\eqref{eq:inverse_delta derivative} we apply Lemma~\ref{lemma:sigma_nabla} and obtain
\[
\ltwonorm{\sigma^{\frac{N}{2}}\nabla g} \le C \left( \norm{\sigma^{\frac{N+2}{2}} D \tilde \delta} +  \norm{\sigma^{\frac{N-2}{2}} g}\right) \le C + C \norm{\sigma^{\frac{N-2}{2}} g} \le  C\lh^{\frac{1}{2}},
\]
where we have used Lemma~\ref{lemma:sigma_delta} and \eqref{eq: esimate for si N g}.

The first term in~\eqref{eq:g_l3_w132} is estimated as follows.
 There holds
\[
\norm{g}_{L^3(\Omega)}^3 = \norm{g}_{L^3(B_h)}^3 + \norm{g}_{L^3(B_h^c)}^3.
\]
For the term on $B_h$ we obtain as above
\[
\norm{g}_{L^3(B_h)}^3 \le C h^3 \norm{g}_{L^\infty(B_h)}^3 \le C h^{-3}.
\]
For the second term we have
\[
\norm{g}_{L^3(B_h^c)}^3 \le C\int_{B^c_h}\frac{1}{(|x-x_0|-h)^{6}}\ dx\le C\int_{3h}^{\diam(\Om)}\frac{1}{(\rho-h)^{6}}\rho^{2}\ d\rho\le Ch^{-3}.
\]

In order to estimate $\norm{\na g}_{L^\frac{3}{2}(\Omega)}$ we use the pointwise representation
\begin{equation}\label{eq:Green_repr_grad}
\na g(x) = \int_{\tau_0} \nabla_x G(x,y) \partial_{y_i} \tilde \delta (y) \, dy,
\end{equation}
apply Lemma~\ref{lemma: Green}, and obtain for $x \in B_h$
\[
\abs{\nabla g(x)}\le \|\tilde{\delta}\|_{W^{1,\infty}(\tau_0)}\int_{\tau_0}|\na_x G(x,y)|dy\le Ch^{-N-1}\int_{\tau_0}|x-y|^{1-N}\ dy\le Ch^{-N-1}\int_0^{ch} d\rho\le Ch^{-N}.
\]
Hence, for $N=3$, we have
\[
\norm{\na g}_{L^\frac{3}{2}(B_h)} \le C h^{-3} \left(h^3\right)^{\frac{2}{3}} = C h^{-1}.
\]
For $x \in B_h^c$ we integrate by parts in~\eqref{eq:Green_repr_grad},
\[
\na g(x) =  - \int_{\tau_0} \partial_{y_i} \nabla_x G(x,y) \tilde \delta (y) \, dy,
\]
and obtain using estimate~\eqref{eq:est3_green} from Lemma~\ref{lemma: Green}
\[
\abs{\na g(x)} \le \|\tilde{\delta}\|_{L^1(\tau_0)}\frac{C}{(|x-x_0|-h)^3}\le \frac{C}{(|x-x_0|-h)^3}.
\]
Thus,
\[
\norm{\na g}_{L^\frac{3}{2}(B_h^c)}^\frac{3}{2} \le C\int_{B^c_h}\frac{1}{(|x-x_0|-h)^{\frac{9}{2}}}\ dx
\le C\int_{3h}^{\diam(\Om)}\frac{\rho^2}{(\rho-h)^{\frac{9}{2}}}\, d\rho\le C h^{-\frac{3}{2}}.
\]
This completes the proof.
\end{proof}

We will also require a discrete version of the Lemma \ref{lemma: inverse_delta derivative}.
\begin{lemma}\label{lemma: Delta_h-1 for Di delta}
For $N=2,3$, we have
$$
\|\si^{\frac{N}{2}}\na\Delta_h^{-1}P_hD\tilde \delta\|_{L^2(\Om)}\le C\lh^{\frac{1}{2}}.
$$
\end{lemma}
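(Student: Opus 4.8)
The plan is to reduce the estimate to a weighted error bound for the Ritz projection. First I would apply the discrete weight--manipulation Lemma~\ref{lemma:sigma_nabla_h} with $\alpha=\frac N2$ to $v_h:=\Delta_h^{-1}P_h D\tilde\delta\in V_h$. Since $\Delta_h v_h=P_h D\tilde\delta$, this gives
\[
\ltwonorm{\si^{\frac N2}\na v_h}\le C\ltwonorm{\si^{\frac{N+2}{2}}P_h D\tilde\delta}+C\ltwonorm{\si^{\frac{N-2}{2}}v_h}.
\]
Because $D\tilde\delta=\pa_{x_i}\tilde\delta$ is a component of $\na\tilde\delta$, the first term is bounded by $\ltwonorm{\si^{\frac{N+2}{2}}P_h\na\tilde\delta}\le C$ by estimate~\eqref{eq:sigma_delta_Ph} of Lemma~\ref{lemma:sigma_delta}. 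Thus everything reduces to the weighted $L^2$ bound $\ltwonorm{\si^{\frac{N-2}{2}}v_h}\le C\lh^{\frac12}$.

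To treat this term I would identify $v_h$ with a Ritz projection. If $g$ denotes the solution of~\eqref{eq: elliptic derivative greens equation} used in Lemma~\ref{lemma: inverse_delta derivative}, then the weak definitions~\eqref{eq:discreteLaplace},~\eqref{eq:l2_proj},~\eqref{eq:Ritz_proj} yield, for all $\chi\in V_h$,
\[
(\na v_h,\na\chi)_\Om=-(D\tilde\delta,\chi)_\Om=-(\na g,\na\chi)_\Om,
\]
so that $v_h=-R_h g$. Hence by the triangle inequality
\[
\ltwonorm{\si^{\frac{N-2}{2}}v_h}\le \ltwonorm{\si^{\frac{N-2}{2}}g}+\ltwonorm{\si^{\frac{N-2}{2}}(g-R_h g)},
\]
and the first term is already $\le C\lh^{\frac12}$ by Lemma~\ref{lemma: inverse_delta derivative}. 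It remains to bound the weighted $L^2$ error of the Ritz projection $\ltwonorm{\si^{\frac{N-2}{2}}(g-R_h g)}$.

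For this error I would run a weighted energy estimate coupled with a weighted duality argument. Writing $e=g-R_h g$ and $e_h=R_h g-i_h g\in V_h$, I move the weight $\si^N$ across the gradient in $\ltwonorm{\si^{\frac N2}\na e}^2$, invoke Galerkin orthogonality against $i_h(\si^N e_h)\in V_h$, and control the resulting interpolation commutators by the super-approximation Lemma~\ref{lemma:super_ih_ph} together with $|\na\si|\le C$. Using $e_h=(g-i_h g)-e$, this bounds $\ltwonorm{\si^{\frac N2}\na e}$ by interpolation errors of $g$ plus $C\ltwonorm{\si^{\frac{N-2}{2}}e}$. The weighted $L^2$ error $\ltwonorm{\si^{\frac{N-2}{2}}e}$ I would then bound by an Aubin--Nitsche duality: solving $-\Delta w=\si^{N-2}e$ and testing against $e$ via Galerkin orthogonality produces the weighted interpolation error of $w$, which, after trading the gained power of $h$ against $\si^{-1}\le(Kh)^{-1}$, is controlled by $CK^{-1}\ltwonorm{\si^{\frac N2}\na e}$; in three dimensions the requisite weighted elliptic estimates are furnished by Lemma~\ref{lemma: from weighted L2 to weighted in H1}.

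The crux, and the main obstacle, is closing this coupling between the weighted $H^1$ and $L^2$ errors and making the weighted duality rigorous. The decisive point is the factor $K^{-1}$: since it multiplies $\ltwonorm{\si^{\frac N2}\na e}$, choosing $K$ large enough in the definition~\eqref{eq: sigma weight} of $\si$ lets me absorb that term into the left-hand side. What remains on the right are only interpolation errors of $g$---estimated, as in the proof of Lemma~\ref{lemma: inverse_delta derivative}, by the local approximation properties of $i_h$ and the Green's-function bounds of Lemma~\ref{lemma: Green}---and the continuous quantities $\ltwonorm{\si^{\frac{N-2}{2}}g}$ and $\ltwonorm{\si^{\frac N2}\na g}$, both $\le C\lh^{\frac12}$ by Lemma~\ref{lemma: inverse_delta derivative}. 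Carrying the single power of $\lh^{\frac12}$ through then gives $\ltwonorm{\si^{\frac{N-2}{2}}(g-R_h g)}\le C\lh^{\frac12}$, and combined with the first two paragraphs this yields the claim.
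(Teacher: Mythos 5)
Your opening reduction is sound and consistent with the paper's setup: applying Lemma~\ref{lemma:sigma_nabla_h} with $\alpha=\tfrac N2$, controlling $\ltwonorm{\si^{\frac{N+2}{2}}P_hD\tilde\delta}$ by \eqref{eq:sigma_delta_Ph}, and identifying $\Delta_h^{-1}P_hD\tilde\delta$ with $-R_hg$ is exactly how the paper handles $N=2$ (where $\si^{\frac{N-2}{2}}=1$ and a plain $L^2$ error estimate plus $H^2$ regularity finishes the job). The problem is the case $N=3$, which is the whole difficulty of this lemma, and there your argument is not closed. Note first that the naive route fails quantitatively: $\norm{g-R_hg}_{L^2(\Om)}\le Ch^2\norm{g}_{H^2(\Om)}\le Ch^2\norm{D\tilde\delta}_{L^2(\Om)}\sim h^{-\frac12}$, so the weight must do real work, and everything hinges on the weighted duality you only sketch. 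That sketch has two concrete gaps. (i) The absorption via $h\le \si/K$ requires a weighted $H^2$ estimate of the form $\ltwonorm{\si^{-\frac12}\nabla^2w}\le C\ltwonorm{\si^{-\frac12}\Delta w}+\dots$ for the dual solution $w$; nothing in the paper provides this, and Lemma~\ref{lemma: from weighted L2 to weighted in H1}, which you cite for it, is a Gagliardo--Nirenberg interpolation inequality for first derivatives and cannot substitute for weighted second-order elliptic regularity on a polyhedron. (ii) The ``interpolation errors of $g$'' that remain after your energy step, e.g.\ $\ltwonorm{\si^{\frac N2}\nabla(g-i_hg)}$, require pointwise control of $\nabla^2 g$ away from $x_0$, i.e.\ third-derivative bounds $|\nabla_x^2\nabla_yG(x,y)|$; Lemma~\ref{lemma: Green} stops at $|\nabla_x\nabla_yG|$, and such higher bounds are not available up to edges of a convex polyhedron.

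The paper avoids both obstacles by a different route for $N=3$: it imports the ready-made weighted Ritz-projection error estimate of \cite[Lemma 10]{LeykekhmanD_VexlerB_2016c}, $\ltwonorm{\si^{\frac32}\nabla(g-g_h)}\le Ch\bigl(\ltwonorm{\si^{\frac32}\Delta_hg_h}+\ltwonorm{\si^{\frac12}\nabla g_h}\bigr)$, bounds the first term by $Ch^{-1}$ via Lemma~\ref{lemma:sigma_delta}, and then controls $\ltwonorm{\si^{\frac12}\nabla g_h}$ through Lemma~\ref{lemma:sigma_nabla_h}, the interpolation inequality Lemma~\ref{lemma: from weighted L2 to weighted in H1} with $p=3$, the $L^3$ and $W^{1,\frac32}$ bounds \eqref{eq:g_l3_w132}, and the $W^{1,\frac32}$-stability of the Ritz projection from \cite{JGuzman_DLeykekhman_JRossmann_AHSchatz_2009a}. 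That detour through $L^3$/$L^{\frac32}$ norms is precisely what replaces the weighted $H^2$ machinery your plan would need to build from scratch. As written, your proposal establishes the easy $N=2$ case but leaves the essential $N=3$ estimate resting on unproven ingredients.
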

\begin{proof}
Let $g$ be solution of \eqref{eq: elliptic derivative greens equation} and let $g_h\in V_h$ satisfy
\begin{equation}\label{eq: L discrete elliptic greens}
    -\Delta_h g_h = P_hD\tilde{\delta}.
\end{equation}
Notice that $g_h=R_hg$. Thus in order to establish the lemma, we need to show
$$
\|\si^{\frac{N}{2}}\na g_h\|_{L^2(\Om)}\le C\lh^{\frac{1}{2}}.
$$
For $N=2$ we apply Lemma~\ref{lemma:sigma_nabla_h} and obtain
\[
\|\si\na g_h\|_{L^2(\Om)} \le C \left( \ltwonorm{\si^2 P_h D \tilde \delta} + \norm{g_h}_{L^2(\Om)} \right) \le C + C \norm{g_h}_{L^2(\Om)},
\]
where we have used Lemma~\ref{lemma:sigma_delta}. Thus, for $N=2$ it remains to prove
\[
\|g_h\|_{L^2(\Om)}\le C\lh^{\frac{1}{2}}.
\]
To prove this estimate, we use  Lemma \ref{lemma: inverse_delta derivative}, global error estimates in the $L^2(\Omega)$, the $H^2$ regularity, and the property~\eqref{delta1} of $\tilde \delta$. Thus, we obtain
$$
\begin{aligned}
\|g_h\|_{L^2(\Om)}\le\|g\|_{L^2(\Om)}+\|g-g_h\|_{L^2(\Om)}&\le C\lh^{\frac{1}{2}}+Ch^2\|g\|_{H^2(\Om)}\\
&\le C\lh^{\frac{1}{2}}+Ch^2\|D\tilde \delta\|_{L^2(\Om)}\le C\lh^{\frac{1}{2}}.
\end{aligned}
$$
The case $N=3$ is more challenging.
By the triangle inequality we get
\begin{equation}\label{eq: after lemma}
\|\si^{\frac{3}{2}} \nabla g_h\|_{L^2(\Om)} \le \|\si^{\frac{3}{2}} \nabla g\|_{L^2(\Om)} + \|\si^{\frac{3}{2}} \nabla(g-g_h)\|_{L^2(\Om)}.
\end{equation}
For the first term we have by Lemma \ref{lemma: inverse_delta derivative}
$$
\|\si^{\frac{3}{2}}\nabla g\|_{L^2(\Om)}\le C\lh^{\frac{1}{2}}.
$$
For the second term we apply~\cite[Lemma 10]{LeykekhmanD_VexlerB_2016c}, which gives
\[
\|\si^{\frac{3}{2}} \nabla(g-g_h)\|_{L^2(\Om)} \le C h \left(\norm{\si^{\frac{3}{2}} \Delta_h g_h}_{L^2(\Om)} + \norm{\si^{\frac{1}{2}}\nabla g_h}_{L^2(\Om)}\right).
\]
For the term $\norm{\si^{\frac{3}{2}} \Delta_h g_h}_{L^2(\Om)}$ we get by Lemma~\ref{lemma:sigma_delta}
\[
\norm{\si^{\frac{3}{2}} \Delta_h g_h}_{L^2(\Om)} = \norm{\si^{\frac{3}{2}} P_h D \tilde \delta}_{L^2(\Om)} \le C h^{-1}.
\]
Inserting this estimate into~\eqref{eq: after lemma} we obtain
\begin{equation}\label{eq:s32n_hs12n}
\|\si^{\frac{3}{2}} \nabla g_h\|_{L^2(\Om)} \le  C\lh^{\frac{1}{2}} + C h \norm{\si^{\frac{1}{2}}\nabla g_h}_{L^2(\Om)}.
\end{equation}
Thus, it remains to estimate $\norm{\si^{\frac{1}{2}}\nabla g_h}_{L^2(\Om)}$.
To this end we apply Lemma~\ref{lemma:sigma_nabla_h} and obtain
\[
\norm{\si^{\frac{1}{2}}\nabla g_h}_{L^2(\Om)} \le C \left(\ltwonorm{\si^{\frac{3}{2}}P_hD \tilde \delta} + \ltwonorm{\si^{-\frac{1}{2}} g_h} \right).
\]
Using Lemma~\ref{lemma:sigma_delta} we obtain
\begin{equation}\label{eq: from weighted H1 to L2 2}
\|\si^{\frac{1}{2}}\na g_h\|_{L^2(\Om)}\le  Ch^{-1} + C\|\si^{-\frac{1}{2}} g_h\|_{L^2(\Om)}.
\end{equation}
To estimate $\|\si^{-\frac{1}{2}} g_h\|_{L^2(\Om)}$ we use Lemma \ref{lemma: from weighted L2 to weighted in H1}, with $\alpha=\beta=-\frac{1}{2}$ and $p=3$, to obtain
\begin{equation}\label{eq: from L2 using lemma}
\|\si^{-\half} g_h\|_{L^2(\Om)}\le C\|g_h\|^{\frac{1}{2}}_{L^3(\Om)}\|\na g_h\|^{\frac{1}{2}}_{L^{\frac{3}{2}}(\Om)} \le C\|g_h\|^{\frac{1}{2}}_{L^3(\Om)}\|\na g\|^{\frac{1}{2}}_{L^{\frac{3}{2}}(\Om)},
\end{equation}
where in the last step we used stability of the Ritz projection in the $W^{1,\frac{3}{2}}(\Omega)$ seminorm, see~\cite{JGuzman_DLeykekhman_JRossmann_AHSchatz_2009a}.
Using the inverse and the triangle inequalities,
$$
\begin{aligned}
\|g_h\|_{L^3(\Om)}&\le \|g\|_{L^3(\Om)}+\|g-g_h\|_{L^3(\Om)}\le \|g\|_{L^3(\Om)}+\|i_hg-g_h\|_{L^3(\Om)}+\|g-i_hg\|_{L^3(\Om)}\\
&\le \|g\|_{L^3(\Om)}+Ch^{-\frac{1}{2}}\|i_hg-g_h\|_{L^2(\Om)}+\|g-i_hg\|_{L^3(\Om)}\\
&\le \|g\|_{L^3(\Om)}+Ch^{-\frac{1}{2}}\|g-g_h\|_{L^2(\Om)}+Ch^{-\frac{1}{2}}\|g-i_hg\|_{L^2(\Om)}+\|g-i_hg\|_{L^3(\Om)}.
\end{aligned}
$$
Using the approximation theory \eqref{eq:I_h approximation}, the standard $L^2$ estimate, and the properties of $\tilde{\delta}$ function, we have
\begin{equation}\label{eq: approximation terms g-Ig}
h^{-\frac{1}{2}}\|g-g_h\|_{L^2(\Om)} + h^{-\frac{1}{2}}\|g-i_hg\|_{L^2(\Om)}+\|g-i_hg\|_{L^3(\Om)}\le Ch^{\frac{3}{2}}\|g\|_{H^2(\Om)}\le Ch^{\frac{3}{2}}\|D\tilde{\delta}\|_{L^2(\Om)}\le Ch^{-1}.
\end{equation}
By Lemma~\ref{lemma: inverse_delta derivative} we have
\[
\norm{g}_{L^3(\Omega)}  + \norm{\nabla g}_{L^{\frac{3}{2}}(\Omega)} \le C h^{-1}.
\]
Inserting this in~\eqref{eq: from L2 using lemma} and~\eqref{eq: from weighted H1 to L2 2} we obtain
\[
\norm{\sigma^{\frac{1}{2}} \nabla g_h} \le C h^{-1}.
\]
Using~\eqref{eq:s32n_hs12n} we establish the lemma for $N=3$.
\end{proof}


\section{Weighted resolvent estimates}\label{sec: weighted resolvent}
In this section we will prove the weighted resolvent estimates in two and three dimensions. 
Since in this section (only) we will be dealing with complex valued function spaces, we need to  modify the definition of the $L^2$-inner product as
$$
(u,v)_\Om = \int_\Omega u(x) \bar v(x) \, dx,
$$
where $\bar v$ is the complex conjugate of $v$. Moreover we introduce the spaces $\mathbb{V} = H^1_0(\Omega) + i H^1_0(\Omega)$ and $\mathbb{V}_h = V_h + i V_h$.

In the continuous case for Lipschitz domains the following result was shown in \cite{ShenZW_1995a}: For any $\gamma\in (0,\frac{\pi}{2})$ there exists a constant $C = C_\gamma$ such that
\begin{equation}\label{eq: continuous resolvent}
\|(z+\Delta)^{-1}v\|_{L^p(\Om)}\le  \frac{C}{\abs{z}}\|v\|_{L^p(\Om)},\quad  z \in \C \setminus \Sigma_{\gamma}, \quad 1\le p\le \infty, \quad v \in L^p(\Omega),
\end{equation}
where $\Sigma_{\gamma}$ is defined by
\begin{equation}\label{eq: definition of sigma}
\Sigma_{\gamma}= \Set{z \in \mathbb{C} | \abs{\arg{z}} \le \gamma}.
\end{equation}
In the finite element setting, it is also known that
\begin{equation}\label{eq: resolven estimate}
\|(z+\Delta_h)^{-1}\chi\|_{L^p(\Om)}\le  \frac{C}{|z|}\|\chi\|_{L^p(\Om)},\quad\text{for all }\, z\in \mathbb{C}\setminus \Sigma_{\gamma},\;\ \chi\in \mathbb{V}_h
\end{equation}
for $1 \le p \le \infty$. For smooth domains such result is established in \cite{BakaevNY_ThomeeV_WahlbinLB_2003a} and for convex polyhedral domains in \cite{LeykekhmanD_VexlerB_2016c, LiB_SunW_2017a}.
In~\cite[Theorem 7]{LeykekhmanD_VexlerB_2016d} we also established the following weighted resolvent estimate:
\begin{equation}\label{eq:weighted_resolvent1}
\|\sigma^{\frac{N}{2}}(z+\Delta_h)^{-1}\chi\|_{L^2(\Om)}\le  \frac{C|\ln{h}|}{|z|}\|\sigma^{\frac{N}{2}}\chi\|_{L^2(\Om)},\quad\text{for all }\, z\in \mathbb{C}\setminus \Sigma_{\gamma},\; \chi\in \mathbb{V}_h.
\end{equation}

Our goal in this section is to establish another resolvent estimate in the weighted norm, which will be required later.

\begin{theorem}\label{thm: very weighted_Resolvent}
Let $N=2,3$. For any $\gamma\in(0,\frac{\pi}{2})$, there exists a constant $C$ independent of $h$ and $z$ such that
\[
\norm{\sigma^{\frac{N}{2}}\na\Delta^{-1}_h(z+\Delta_h)^{-1}\chi}_{L^2(\Om)} \le \frac{C \lh^{\frac{N-1}{N}}}{\abs{z}} \norm{\sigma^{\frac{N}{2}} \na\Delta^{-1}_h\chi}_{L^2(\Om)},\quad\text{for all }\, z\in \mathbb{C}\setminus \Sigma_{\gamma},\; \chi\in \mathbb{V}_h,
\]
 where $\Sigma_{\gamma}$ is defined in \eqref{eq: definition of sigma}.
\end{theorem}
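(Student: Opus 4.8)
The plan is to first strip off the inverse Laplacians. Since $\Delta_h^{-1}$ and $(z+\Delta_h)^{-1}$ are both functions of the self-adjoint operator $\Delta_h$ on $\mathbb{V}_h$, they commute, so that $\na\Delta_h^{-1}(z+\Delta_h)^{-1}\chi=\na(z+\Delta_h)^{-1}w_h$ with $w_h:=\Delta_h^{-1}\chi$, and the right-hand side of the theorem is precisely $\|\sigma^{\frac N2}\na w_h\|_{L^2(\Om)}$. As $\chi$ ranges over $\mathbb{V}_h$ so does $w_h$, so it suffices to prove
\begin{equation}
\|\sigma^{\frac N2}\na(z+\Delta_h)^{-1}w_h\|_{L^2(\Om)}\le\frac{C\,\lh^{\frac{N-1}{N}}}{|z|}\,\|\sigma^{\frac N2}\na w_h\|_{L^2(\Om)},\qquad w_h\in\mathbb{V}_h.
\tag{$\star$}
\end{equation}
As an orienting ingredient I would first record the unweighted, logarithm-free version of $(\star)$. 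Writing $A:=-\Delta_h$ (self-adjoint and positive semidefinite on $\mathbb{V}_h$) and $\phi_h:=(z+\Delta_h)^{-1}w_h$, the identity $\|\na v_h\|_{L^2(\Om)}^2=(A v_h,v_h)_\Om$ from \eqref{eq:discreteLaplace}, the commutation $A^{\frac12}(z-A)^{-1}=(z-A)^{-1}A^{\frac12}$, and the spectral bound $\|(z+\Delta_h)^{-1}\|_{L^2(\Om)\to L^2(\Om)}\le C_\gamma/|z|$ (the case $p=2$ of \eqref{eq: resolven estimate}) give at once
\[
\|\na\phi_h\|_{L^2(\Om)}=\|(z-A)^{-1}A^{\frac12}w_h\|_{L^2(\Om)}\le\frac{C}{|z|}\,\|A^{\frac12}w_h\|_{L^2(\Om)}=\frac{C}{|z|}\,\|\na w_h\|_{L^2(\Om)}.
\]
Thus the $\sigma$-free endpoint is free of charge; the whole content of $(\star)$ is to reinsert the weight $\sigma^{\frac N2}$ at the cost of only the sublinear factor $\lh^{\frac{N-1}{N}}$.

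For the weighted estimate I would mimic, on the gradient, the argument used to prove the weaker weighted bound \eqref{eq:weighted_resolvent1}. Starting from $(z+\Delta_h)\phi_h=w_h$ and testing against a weighted companion of the form $P_h(\sigma^{N}\phi_h)$ (the projection $P_h$ being unavoidable since $\sigma^N\phi_h\notin V_h$), one obtains a weighted energy identity for $\|\sigma^{\frac N2}\na\phi_h\|_{L^2(\Om)}^2$ in which the commutator of multiplication by $\sigma^N$ with $\Delta_h$ is absorbed by the superapproximation estimate \eqref{sigma_est_ph3_ph4} of Lemma~\ref{lemma:super_ih_ph}, and the terms involving $\na\sigma$, $\na^2\sigma$ are controlled through the weight properties \eqref{eq: property 2 of sigma}--\eqref{eq: property 3 of sigma}. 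Whenever it is convenient to trade a weighted gradient for a weighted Laplacian I would use Lemma~\ref{lemma:sigma_nabla_h} together with the resolvent equation $\Delta_h\phi_h=w_h-z\phi_h$, after which the already established weighted resolvent estimate \eqref{eq:weighted_resolvent1} disposes of the surviving function-value factor $\|\sigma^{\frac N2}\phi_h\|_{L^2(\Om)}$. This route yields a weighted version of $(\star)$, but at this stage only with the crude logarithmic power $\lh$.

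The sharpening from $\lh$ to $\lh^{\frac{N-1}{N}}$ is what I expect to be the heart of the proof and its main obstacle. The natural mechanism is an interpolation in the weight: one splits $\|\sigma^{\frac N2}\na\phi_h\|_{L^2(\Om)}$, by H\"older, between the logarithm-free unweighted endpoint above and a more heavily weighted factor, and optimizes the split so that the only genuinely logarithmic ingredient, $\|\sigma^{-\frac N2}\|_{L^2(\Om)}^2\le C\lh$ from \eqref{eq: property 1 of sigma}, enters to the fractional power $\tfrac{N-1}{N}$ rather than to the first power. For $N=3$ this interpolation can be made rigorous through the weighted Gagliardo--Nirenberg inequality of Lemma~\ref{lemma: from weighted L2 to weighted in H1}, the exponents $\alpha,\beta,p$ being tuned to the weight $\sigma^{\frac N2}$; for $N=2$, where that lemma is unavailable, I would instead argue by an elementary splitting of $\Om$ into the region near $x_0$ (where $\sigma\simeq h$ and an inverse inequality applies) and its complement. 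The delicate points are the bookkeeping that forces every cross-term to be either absorbed into $\|\sigma^{\frac N2}\na\phi_h\|_{L^2(\Om)}$ or bounded by $\tfrac1{|z|}\|\sigma^{\frac N2}\na w_h\|_{L^2(\Om)}$, and the need to run the interpolation in the direction that reproduces exactly $\|\sigma^{\frac N2}\na w_h\|_{L^2(\Om)}$ on the right-hand side rather than an unbalanced product of weighted norms of $w_h$.
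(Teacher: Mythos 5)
Your reduction to $w_h=\Delta_h^{-1}\chi$ and the unweighted spectral--calculus endpoint are correct, and the toolkit you list (Lemma~\ref{lemma:complex}, superapproximation, the weight properties, Lemma~\ref{lemma: from weighted L2 to weighted in H1}) is the right one. The gap is in the step that is supposed to produce the weighted estimate: testing $(z+\Delta_h)\phi_h=w_h$ with $P_h(\sigma^N\phi_h)$ gives, via Lemma~\ref{lemma:complex}, an inequality of the form $\abs{z}\,\|\si^{\frac N2}\phi_h\|^2+\|\si^{\frac N2}\na\phi_h\|^2\le C\abs{(w_h,\si^N\phi_h)}+\dots$, in which your target $\|\si^{\frac N2}\na\phi_h\|$ sits in the slot \emph{not} multiplied by $\abs{z}$; after Young's inequality this yields at best $\|\si^{\frac N2}\na\phi_h\|\le C\abs{z}^{-\frac12}\|\si^{\frac N2}w_h\|$, not the required $\abs{z}^{-1}$ decay. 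The alternative route you mention through Lemma~\ref{lemma:sigma_nabla_h} and the resolvent identity $\Delta_h\phi_h=w_h-z\phi_h$ is worse still: it leaves a term $\|\si^{\frac N2+1}w_h\|$ with no decay in $z$ at all, so the ``crude $\lh$ version of $(\star)$'' you claim as an intermediate result is not actually reached. The paper's resolution is to keep the equation for $u_h=(z+\Delta_h)^{-1}\chi$ and test with $\Delta_h^{-1}P_h(\si^N u_h)$: this produces the identity $-z\|\si^{\frac N2}\na\Delta_h^{-1}u_h\|^2+\|\si^{\frac N2}u_h\|^2=F$, so that the target is the quantity multiplied by $z$, and the leading part of $F$ can be integrated by parts through $\Delta_h^{-1}$ to land exactly on $\|\si^{\frac N2}\na\Delta_h^{-1}\chi\|\cdot\|\si^{\frac N2}\na\Delta_h^{-1}u_h\|$; absorbing then gives the full factor $\abs{z}^{-1}$.

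The second missing piece is the actual source of the exponent $\frac{N-1}{N}$, which is not the weight interpolation against $\|\si^{-\frac N2}\|_{L^2(\Om)}^2\le C\lh$ that you sketch. In the paper the energy identity above leaves a remainder $C\abs{z}\,\|\si^{\frac{N-2}{2}}\Delta_h^{-1}u_h\|^2$ coming from the commutator terms, and a separate argument (Lemma~\ref{lemma: needed for last step}, with the second test functions $-\Delta_h^{-2}u_h$ for $N=2$ and $-\Delta_h^{-1}P_h(\si\Delta_h^{-1}u_h)$ for $N=3$) is needed to bound it by $\lh^{\frac{N-1}{N}}\abs{z}^{-1}\|\si^{\frac N2}\na\Delta_h^{-1}\chi\|$. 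For $N=3$ that argument in turn requires the unweighted discrete resolvent bound $\|\na(z+\Delta_h)^{-1}f\|_{L^3(\Om)}\le C\|f\|_{L^{\frac32}(\Om)}$ of Lemma~\ref{lemma: resolvent_l32_l3grad}, proved via $W^{2,\frac32}$ elliptic regularity and the Scott--Zhang interpolant, combined with Lemma~\ref{lemma: from weighted L2 to weighted in H1} and the embedding $W^{1,1}(\Om)\hookrightarrow L^{\frac32}(\Om)$. None of these ingredients appear in your plan, and without a non-logarithmic (unweighted $L^p$) endpoint of this kind the optimization you describe has nothing to interpolate against; so the heart of the proof, by your own assessment, is indeed the part that is missing.
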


Before we provide a proof of the above theorem we collect some preliminary results.

\subsection{Preliminary resolvent results}
The following lemma will be often used if dealing resolvent estimates.
\begin{lemma}\label{lemma:complex}
Let for each $z \in \mathbb{C}\setminus \Sigma_{\gamma}$ the numbers $\alpha_z, \beta_z \in \R_+$ be given and let $F_z = - z \alpha_z^2 + \beta_z^2$. Then there exists a constant $ C_\gamma$ such that
\[
\abs{z} \alpha_z^2 + \beta_z^2 \le C_\gamma \abs{F_z} \quad \text{for all }\, z\in \mathbb{C}\setminus \Sigma_{\gamma}.
\]
\end{lemma}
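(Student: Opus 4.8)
The plan is to strip the problem down to an elementary two–variable real inequality. Writing $z = \abs{z}e^{i\theta}$ and introducing the nonnegative quantities $a = \abs{z}\alpha_z^2$ and $b = \beta_z^2$, the target estimate reads $a + b \le C_\gamma\abs{F_z}$, while $F_z = b - a e^{i\theta}$. The only structural information I will use is that $z \notin \Sigma_\gamma$ forces $\abs{\theta} > \gamma$, hence $\cos\theta \le \cos\gamma$, and since $\gamma \in (0,\tfrac{\pi}{2})$ this means $\cos\theta$ is bounded above by a constant strictly less than $1$.

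First I would compute the modulus directly:
\[
\abs{F_z}^2 = (b - a\cos\theta)^2 + (a\sin\theta)^2 = a^2 + b^2 - 2ab\cos\theta.
\]
Then, using $a,b \ge 0$ together with $\cos\theta \le \cos\gamma$, I would bound $-2ab\cos\theta \ge -2ab\cos\gamma$, and since $2ab \le a^2 + b^2$ and $\cos\gamma > 0$, conclude
\[
\abs{F_z}^2 \ge a^2 + b^2 - 2ab\cos\gamma \ge (1-\cos\gamma)(a^2+b^2).
\]
Finally, combining this with the trivial bound $(a+b)^2 \le 2(a^2+b^2)$ yields $(a+b)^2 \le \tfrac{2}{1-\cos\gamma}\abs{F_z}^2$, so the lemma holds with $C_\gamma = \sqrt{2/(1-\cos\gamma)}$.

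There is no genuine analytic obstacle here; the statement is a pointwise inequality between complex numbers. The one point to watch is \emph{uniformity}: since $\alpha_z$ and $\beta_z$ are arbitrary nonnegative reals depending on $z$, every bound must be homogeneous of the correct degree and the constant must depend only on $\gamma$, through $\cos\gamma$. The degenerate cases $a=0$ or $b=0$ are covered automatically by the same chain of inequalities, and the sector hypothesis enters solely through the gap $1 - \cos\gamma > 0$, which is exactly what keeps the constant finite.
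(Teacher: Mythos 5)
Your proof is correct. You compute $\abs{F_z}^2 = a^2+b^2-2ab\cos\theta$ by the law of cosines and use the sector condition only through $\cos\theta\le\cos\gamma<1$, whereas the paper writes $-z\alpha_z^2=\abs{z}\alpha_z^2 e^{i\phi_z}$ with $\abs{\phi_z}\le\pi-\gamma$, multiplies the identity by $e^{-i\phi_z/2}$, and takes real parts to get the bound with constant $(\sin(\gamma/2))^{-1}$ in one line. The two routes are equally elementary and in fact yield the identical constant, since $1-\cos\gamma=2\sin^2(\gamma/2)$ gives $\sqrt{2/(1-\cos\gamma)}=(\sin(\gamma/2))^{-1}$; the paper's rotation trick avoids squaring and the auxiliary inequality $2ab\le a^2+b^2$, while yours makes the geometric mechanism (the angle between the two summands being bounded away from $\pi$) completely explicit. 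Your remarks on homogeneity and the degenerate cases $a=0$ or $b=0$ are correct but not needed, as the chain of inequalities handles them automatically, exactly as you say.
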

\begin{proof}
We consider the polar representation $-z \alpha^2_z = \abs{z} \alpha_z^2 e^{i \phi_z}$ with $\abs{\phi_z} \le \pi - \gamma$, since $\gamma \le \abs{\arg{z}}\le \pi$. This results in
\[
\abs{z} \alpha_z^2 e^{i \phi_z} + \beta_z^2 = F_z.
\]
Multiplying it by $e^{-i\phi_z/2}$ and taking real parts, we have
\[
\abs{z} \alpha_z^2 + \beta_z^2 \le (\cos{(\phi_z/2)})^{-1}|F_z| \le (\sin{(\gamma/2)})^{-1}|F_z| = C_\gamma|F_z|.
\]
\end{proof}

The following result is a best approximation type estimate in $H^1$ norm.
\begin{lemma}\label{lemma: resovent best approximation in H1}
Let $w \in \mathbb{V}$ and let $w_h \in \mathbb{V}_h$ with $e=w-w_h$ be defined by the orthogonality relation
\begin{equation}\label{eq: Galerkin orthogonality}
z(e,\chi)- (\na e,\na \chi) = 0, \quad \text{for all }\; \chi\in \mathbb{V}_h.
\end{equation}
Then there exists a constant $C>0$ such that for any $\chi\in\mathbb{V}_h$
\[
\norm{\na (w-w_h)}_{L^2(\Omega)}\le C\inf_{\chi\in\mathbb{V}_h}\left(h^{-1}\norm{w-\chi}_{L^2(\Omega)}+\norm{\na (w-\chi)}_{L^2(\Omega)}\right).
\]
\end{lemma}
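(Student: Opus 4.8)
The plan is to run a C\'ea-type argument adapted to the indefinite bilinear form $z(u,v) - (\na u,\na v)$ underlying the orthogonality relation \eqref{eq: Galerkin orthogonality}, using the $L^2$-projection $P_h$ from \eqref{eq:l2_proj} as the comparison function precisely so that the zeroth-order term carrying the large parameter $z$ drops out. First I would write $\eta = w - P_hw$ and $\theta = w_h - P_hw \in \mathbb{V}_h$, so that $e = w - w_h = \eta - \theta$. Testing \eqref{eq: Galerkin orthogonality} with $\chi = \theta$ and expanding $e = \eta - \theta$ yields
\[
-z\norm{\theta}^2 + \norm{\na\theta}^2 = (\na\eta,\na\theta) - z(\eta,\theta).
\]
The crucial observation is that $(\eta,\theta) = 0$, because $\eta = w - P_hw$ is $L^2$-orthogonal to $\mathbb{V}_h$ and $\theta\in\mathbb{V}_h$; hence the right-hand side collapses to $(\na\eta,\na\theta)$ and is entirely free of $z$.

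Next I apply Lemma~\ref{lemma:complex} with $\alpha_z = \norm{\theta}$ and $\beta_z = \norm{\na\theta}$, so that $F_z = -z\norm{\theta}^2 + \norm{\na\theta}^2 = (\na\eta,\na\theta)$. This gives
\[
\abs{z}\norm{\theta}^2 + \norm{\na\theta}^2 \le C_\gamma\abs{(\na\eta,\na\theta)} \le C_\gamma\norm{\na\eta}\,\norm{\na\theta},
\]
whence, after discarding the nonnegative first term and cancelling one factor of $\norm{\na\theta}$, one obtains $\norm{\na\theta} \le C\norm{\na\eta}$. The triangle inequality then produces
\[
\norm{\na e} \le \norm{\na\eta} + \norm{\na\theta} \le C\norm{\na(w - P_hw)}.
\]

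It remains to replace the $L^2$-projection error by the best approximation. For arbitrary $\chi\in\mathbb{V}_h$ I would write $\na(w - P_hw) = \na(w-\chi) + \na P_h(\chi - w)$ (using $P_h\chi = \chi$), and control the second summand by the inverse inequality together with the $L^2$-stability of $P_h$, giving $\norm{\na P_h(\chi-w)} \le Ch^{-1}\norm{\chi-w}$. Taking the infimum over $\chi\in\mathbb{V}_h$ then closes the estimate.

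The main obstacle, and essentially the only genuine idea, is to control the indefinite $z$-dependent form uniformly in $z$. A naive comparison, for instance with the Ritz projection $R_h$, would instead make $(\na\eta,\na\theta)$ vanish and leave the surviving term $z(\eta,\theta)$, whose natural bound $\abs{z}^{\frac{1}{2}}\norm{\eta}$ degenerates as $\abs{z}\to\infty$ and spoils the $h^{-1}$ scaling. Choosing $P_hw$ as the comparison function makes this term vanish exactly, and the sectorial restriction $z\in\C\setminus\Sigma_\gamma$ enters only through the coercivity constant $C_\gamma$ of Lemma~\ref{lemma:complex}; everything else is a routine application of approximation theory.
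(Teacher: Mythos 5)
Your argument is correct, and it takes a genuinely different route from the paper's. The paper keeps an arbitrary comparison function $\chi$ inside the energy identity, so the right-hand side retains the term $z(e,w-\chi)$; after applying Lemma~\ref{lemma:complex} and Young's inequality this leaves the bound $\abs{z}\norm{e}^2+\norm{\na e}^2\le C_\gamma^2(\abs{z}\norm{w-\chi}^2+\norm{\na(w-\chi)}^2)$, and the factor $\abs{z}^{1/2}$ is then traded for $h^{-1}$ by a case distinction ($\abs{z}\le h^{-2}$ directly; $\abs{z}>h^{-2}$ via the triangle and inverse inequalities applied to $\na(\chi-w_h)$). You instead fix the comparison function to be $P_hw$, so that $L^2$-orthogonality kills the $z$-dependent term identically, Lemma~\ref{lemma:complex} immediately yields $\norm{\na\theta}\le C_\gamma\norm{\na\eta}$ with no case split, and the inverse inequality together with the $L^2$-stability of $P_h$ is deferred to a single postprocessing step converting $\norm{\na(w-P_hw)}$ into the stated best-approximation form. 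Both proofs rest on the same sectorial coercivity lemma and on quasi-uniformity (through the inverse inequality); yours is shorter and isolates the role of $z$ more cleanly, while the paper's version gives the intermediate estimate \eqref{eq: basic energy estimate best approximation} controlling $\abs{z}\norm{e}^2$ as well, which is of independent use. One small point worth making explicit in your write-up: the orthogonality $(\eta,\theta)=0$ holds for the complex sesquilinear $L^2$ product because $P_h$ acts componentwise on real and imaginary parts, so each of the four resulting real inner products vanishes.
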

\begin{proof}
Although the proof is straightforward, we will provide it for a completeness. Using \eqref{eq: Galerkin orthogonality}, for any $\chi\in \mathbb{V}_h$  we have
$$
-z\|e\|^2+\|\na e\|^2=-z(e,e)+(\na e,\na e)=-z(e,w-\chi)+(\na e,\na (w-\chi)):=F.
$$
Using the Cauchy-Schwarz inequality, we obtain
$$
|F|\le |z|\|e\|\|w-\chi\|+\|\na e\|\|\na(w-\chi)\|
$$
Hence, by Lemma~\ref{lemma:complex} and the Young's inequality, we have
\begin{align*}
|z|\|e\|^2+\|\na e\|^2&\le C_\gamma\left( |z|\|e\|\|w-\chi\|+\|\na e\|\|\na(w-\chi)\|\right)\\
&\le \frac{|z|}{2}\|e\|^2+\frac{C_\gamma^2}{2}|z|\|w-\chi\|^2+\frac{1}{2}\|\na e\|^2+\frac{C_\gamma^2}{2}\|\na(w-\chi)\|^2.
\end{align*}
Canceling, we obtain for all $z\in \mathbb{C}\setminus \Sigma_{\gamma}$
\begin{equation}\label{eq: basic energy estimate best approximation}
|z|\|e\|^2+\|\na e\|^2\le C^2_\gamma\left(|z|\|w-\chi\|^2+\|\na(w-\chi)\|^2\right).
\end{equation}
Now we consider two cases: $|z|\le  h^{-2}$ and $|z|>  h^{-2}$.

\textbf{Case 1: $|z|\le  h^{-2}$}.

Using that $|z|\le  h^{-2}$ from \eqref{eq: basic energy estimate best approximation} we immediately obtain
$$
\|\na e\|\le C_\gamma\left(h^{-1}\|w-\chi\|+\|\na(w-\chi)\|\right).
$$

\textbf{Case 2: $|z|>  h^{-2}$}.

In this case  from \eqref{eq: basic energy estimate best approximation}, we conclude
$$
\|e\|^2\le C_\gamma^2\left(\|w-\chi\|^2+\frac{1}{|z|}\|\na (w-\chi)\|^2\right)\le C_\gamma^2\left(\|w-\chi\|^2+h^2\|\na (w-\chi)\|^2\right).
$$
To estimate $\|\na e\|$ we use the triangle and the inverse estimate to obtain
$$
\begin{aligned}
\|\na e\|&\le \|\na (w-\chi)\|+\|\na (\chi-w_h)\|\\
&\le \|\na (w-\chi)\|+C_{\text{inv}}h^{-1}\|\chi-w_h\|\\
&\le \|\na (w-\chi)\|+C_{\text{inv}}h^{-1}(\|\chi-w\|+\|e\|)\\
&\le C_{\text{inv}}(1+C_\gamma)h^{-1}\|w-\chi\|+(C_{\text{inv}}C_\gamma+1)\|\na(w-\chi)\|.
\end{aligned}
$$
Combining both cases, we complete the proof.
\end{proof}

We will also need the following lemma.
\begin{lemma}\label{lemma: resolvent_l32_l3grad}
Let $w_h \in \mathbb{V}_h$ be the solution of
\[
z(w_h,\varphi)_{\Om}-(\na w_h,\na \varphi)_{\Om}=(f,\varphi)_{\Om}, \quad \text{for all }\; \varphi\in \mathbb{V}_h
\]
for some $f \in L^{\thalf}(\Omega) +  i L^{\thalf}(\Omega)$. There exists a constant $c>0$ such that
\[
\norm{\na w_h}_{L^3(\Omega)} \le C\norm{f}_{L^\thalf(\Omega)}.
\]
\end{lemma}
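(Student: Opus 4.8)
The plan is to convert the discrete resolvent equation into a discrete elliptic problem driven by $zw_h-f$, and then to control the $W^{1,3}$ seminorm by combining the $W^{1,p}$ stability of the Ritz projection with the $L^{3/2}$ resolvent estimate \eqref{eq: resolven estimate}. The point is that the quantity $\na w_h$ is precisely $\na R_h$ applied to a continuous elliptic solution whose data is $zw_h - f$, so everything reduces to $L^p$-elliptic facts plus one application of the known resolvent bound.

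First I would rewrite the equation. Using $(\na w_h,\na\varphi)=(-\Delta_h w_h,\varphi)$ in the defining relation shows that $w_h=(z+\Delta_h)^{-1}P_hf$, equivalently $-\Delta_h w_h = zw_h-P_hf=P_h(zw_h-f)$, since $P_hw_h=w_h$. From the definitions of $R_h$, $P_h$ and $\Delta_h$ one checks the identity $(-\Delta_h)^{-1}P_hg=R_h(-\Delta)^{-1}g$, and applying it with $g=zw_h-f$ gives the representation $w_h=R_h\Phi$, where $\Phi=(-\Delta)^{-1}(zw_h-f)\in H^1_0(\Om)\cap W^{2,\frac32}(\Om)$ solves $-\Delta\Phi=zw_h-f$.

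Second, I would estimate $\norm{\na w_h}_{L^3(\Om)}=\norm{\na R_h\Phi}_{L^3(\Om)}$. Invoking the stability of the Ritz projection in the $W^{1,3}$ seminorm (which holds for all $1<p<\infty$ on convex polyhedral domains, cf.~\cite{JGuzman_DLeykekhman_JRossmann_AHSchatz_2009a}, and is the counterpart of the $W^{1,\frac32}$ stability already used in the proof of Lemma~\ref{lemma: Delta_h-1 for Di delta}), we obtain $\norm{\na w_h}_{L^3(\Om)}\le C\norm{\na\Phi}_{L^3(\Om)}$. Since $N=3$, the Sobolev embedding $W^{2,\frac32}(\Om)\hookrightarrow W^{1,3}(\Om)$ together with the elliptic $W^{2,\frac32}$-regularity on the convex domain yields
\[
\norm{\na\Phi}_{L^3(\Om)}\le C\norm{\Phi}_{W^{2,\frac32}(\Om)}\le C\norm{zw_h-f}_{L^{\frac32}(\Om)}\le C\left(\abs{z}\,\norm{w_h}_{L^{\frac32}(\Om)}+\norm{f}_{L^{\frac32}(\Om)}\right).
\]
Finally, the term $\abs{z}\,\norm{w_h}_{L^{\frac32}(\Om)}$ is handled by the resolvent estimate \eqref{eq: resolven estimate} with $p=\frac32$: because $w_h=(z+\Delta_h)^{-1}P_hf$, we get $\abs{z}\,\norm{w_h}_{L^{\frac32}(\Om)}\le C\norm{P_hf}_{L^{\frac32}(\Om)}\le C\norm{f}_{L^{\frac32}(\Om)}$, where the last step is the $L^{\frac32}$ stability of the $L^2$ projection on quasi-uniform meshes. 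Combining the three estimates gives $\norm{\na w_h}_{L^3(\Om)}\le C\norm{f}_{L^{\frac32}(\Om)}$.

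The main obstacle is securing the two non-Hilbert ingredients on a convex polyhedral domain: the $W^{1,3}$ stability of the Ritz projection and the elliptic $W^{2,\frac32}$-regularity; the resolvent estimate \eqref{eq: resolven estimate} and the $L^{\frac32}$ stability of $P_h$ are comparatively routine. A minor point to verify is that the exponent $\frac32<2$ lies in the range for which $W^{2,p}$-regularity holds on convex polyhedra, which it does, so the elliptic estimate is indeed available with a constant independent of $h$ and $z$.
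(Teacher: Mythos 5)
Your proof is correct, but it takes a genuinely different route from the paper. The paper introduces the continuous resolvent solution $w=(z+\Delta)^{-1}f$, gets $\|w\|_{W^{2,\frac32}(\Om)}\le C\|f\|_{L^{\frac32}(\Om)}$ from Shen's \emph{continuous} $L^{\frac32}$ resolvent estimate together with Fromm's regularity, and then bounds $\|\na w_h\|_{L^3(\Om)}$ by the three-term splitting $w_h = w + (i_h^{SZ}w - w) + (w_h - i_h^{SZ}w)$, handling the last piece with the inverse inequality $\|\na v_h\|_{L^3(\Om)}\le Ch^{-1/2}\|\na v_h\|_{L^2(\Om)}$, the $H^1$ best-approximation Lemma~\ref{lemma: resovent best approximation in H1}, and the $O(h^{3/2})$ Scott--Zhang estimate~\eqref{eq:I_h approximation SZ}. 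You instead observe $w_h=R_h\Phi$ with $-\Delta\Phi=zw_h-f$ (via the standard identity $(-\Delta_h)^{-1}P_h = R_h(-\Delta)^{-1}$, which you verify correctly), and close the loop with $W^{1,3}$ stability of $R_h$, the same Fromm/Sobolev step, and the \emph{discrete} $L^{\frac32}$ resolvent estimate~\eqref{eq: resolven estimate} applied to $|z|\|w_h\|_{L^{\frac32}(\Om)}$. Your argument is shorter and dispenses with the Scott--Zhang interpolant, the inverse inequality, and Lemma~\ref{lemma: resovent best approximation in H1} altogether, at the price of invoking two heavier external facts: the discrete $L^p$ resolvent bound for $p=\frac32$ (which the paper does state in~\eqref{eq: resolven estimate} and uses elsewhere, so no circularity arises) and the $W^{1,3}$ stability of the Ritz projection on convex polyhedra --- the latter is not literally the statement of~\cite{JGuzman_DLeykekhman_JRossmann_AHSchatz_2009a} (which is $W^{1,\infty}$) but follows by the standard interpolation with the energy estimate, exactly as the paper itself already assumes for the $W^{1,\frac32}$ case in the proof of Lemma~\ref{lemma: Delta_h-1 for Di delta}; you should say a word about that interpolation step. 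The paper's route, by contrast, needs only the continuous resolvent estimate plus elementary finite element tools. Both proofs rest on the same $W^{2,\frac32}$ elliptic regularity on convex domains, and both implicitly require $z\in\mathbb{C}\setminus\Sigma_\gamma$ for the resolvent bounds to apply.
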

\begin{proof}
Let $w=(z+\Delta)^{-1}f$. From the resolvent estimates~\cite{ShenZW_1995a} we have
$$
\|(z+\Delta)^{-1}f\|_{L^\thalf(\Om)}\le \frac{C}{|z|}\|f\|_{L^\thalf(\Om)}\quad \text{and}\quad
\|\Delta(z+\Delta)^{-1}f\|_{L^\thalf(\Om)}\le C\|f\|_{L^\thalf(\Om)}.
$$
Therefore $\Delta w \in L^\thalf(\Omega)$ and using the elliptic regularity, see~\cite[Corollary 1]{FrommSJ_1993a}, we can conclude that $w \in W^{2,\thalf}(\Omega)$ with
\begin{equation}\label{eq:w_w232}
\|w\|_{W^{2,\frac{3}{2}}(\Om)}\le C\|f\|_{L^{\frac{3}{2}}(\Om)}.
\end{equation}
Since $W^{2,\frac{3}{2}}(\Om)$ is not embedded into $C(\Om)$, we use the Scott-Zhang interpolant $i^{SZ}_h$. Thus, by the triangle inequality we have
$$
\norm{\na w_h}_{L^3(\Om)} \le  \norm{\na w}_{L^3(\Omega)}+\norm{\na (w-i^{SZ}_hw)}_{L^3(\Omega)}+\norm{\na (w_h-i^{SZ}_hw)}_{L^3(\Omega)}:=J_1+J_2+J_3.
$$
Using the Sobolev embedding $W^{2,\frac{3}{2}}(\Om)\hookrightarrow W^{1,3}(\Om)$ and~\eqref{eq:w_w232} we have
$$
J_1\le \|w\|_{W^{2,\frac{3}{2}}(\Om)}\le C\|f\|_{L^{\frac{3}{2}}(\Om)}.
$$
Similarly, using stability of $i^{SZ}_h$ we have
$$
J_2\le \|w\|_{W^{2,\frac{3}{2}}(\Om)}\le C\|f\|_{L^{\frac{3}{2}}(\Om)}.
$$
To estimate $J_3$, we first use the inverse inequality, Lemma \ref{lemma: resovent best approximation in H1}, and \eqref{eq:I_h approximation SZ}, we have
$$
\begin{aligned}
J_3&\le Ch^{-\frac{1}{2}}\norm{\na (w_h-i^{SZ}_hw)}_{L^2(\Om)}\le Ch^{-\frac{1}{2}}\left(\norm{\na (w_h-w)}_{L^2(\Om)}+\norm{\na (w-i^{SZ}_hw)}_{L^2(\Om)}\right)\\
&\le Ch^{-\frac{1}{2}}\left(h^{-1}\norm{w-i^{SZ}_hw}_{L^2(\Om)}+\norm{\na (w-i^{SZ}_hw)}_{L^2(\Om)}\right)\le C\|w\|_{W^{2,\frac{3}{2}}(\Om)}\le C\|f\|_{L^{\frac{3}{2}}(\Om)}.
\end{aligned}
$$
Combining estimates for $J_1$, $J_2$, and $J_3$ we obtain the lemma.
\end{proof}

The following lemma is needed for the proof of our main resolvent estimate Theorem \ref{thm: very weighted_Resolvent}.
\begin{lemma}\label{lemma: needed for last step}
Let $N=2,3$. For a given $\chi \in \mathbb{V}_h$, let $u_h = (z+\Delta_h)^{-1} \chi$, or equivalently
\begin{equation}\label{eq: weak form zu_h}
z(u_h,\varphi)_{\Om}+(\Delta_h u_h,\varphi)_{\Om}=(\chi,\varphi)_{\Om}, \quad \text{for all }\; \varphi\in \mathbb{V}_h.
\end{equation}
Then for any $\gamma\in(0,\frac{\pi}{2})$, there exists a constant $C$ independent of $h$ and $z$ such that
\begin{equation}\label{eq: desired estimate for uh for N}
\|\si^{\frac{N-2}{2}}\Delta_h^{-1}u_h\|_{L^2(\Om)}\le \frac{C\lh^{\frac{N-1}{N}}}{|z|}\norm{\si^{\frac{N}{2}} \na\Delta_h^{-1} \chi}_{L^2(\Om)} \quad \text{for all }\, z\in \mathbb{C}\setminus \Sigma_{\gamma}.
\end{equation}
\end{lemma}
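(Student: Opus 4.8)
The plan is to strip off the two outer factors of $\Delta_h^{-1}$ and reduce \eqref{eq: desired estimate for uh for N} to a weighted energy estimate for a single resolvent problem. Since $\Delta_h$ commutes with $(z+\Delta_h)^{-1}$, setting $\psi_h=\Delta_h^{-1}\chi\in\mathbb{V}_h$ and $w_h=\Delta_h^{-1}u_h=(z+\Delta_h)^{-1}\psi_h$, the quantity to be bounded is exactly $\|\si^{\frac{N-2}{2}}w_h\|_{L^2(\Om)}$, and from \eqref{eq: weak form zu_h} the function $w_h$ satisfies $z(w_h,\varphi)-(\na w_h,\na\varphi)=(\psi_h,\varphi)$ for all $\varphi\in\mathbb{V}_h$, while the right-hand side of \eqref{eq: desired estimate for uh for N} is $\frac{C\lh^{(N-1)/N}}{|z|}\|\si^{N/2}\na\psi_h\|$.

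First I would test this identity with $\varphi=P_h(\si^{N-2}w_h)$. For $N=2$ the weight $\si^{N-2}\equiv1$, so $\varphi=w_h\in V_h$ and the test is clean; for $N=3$, since $w_h\in V_h$ and $P_h$ is self-adjoint, the mass term reproduces $z\|\si^{\frac{N-2}{2}}w_h\|^2$ exactly, while the stiffness term, after writing $P_h=\Id-(\Id-P_h)$ and expanding $\na(\si^{N-2}w_h)$, yields $\|\si^{\frac{N-2}{2}}\na w_h\|^2$ plus cross terms bounded by $C\|\si^{\frac{N-2}{2}}\na w_h\|\,\|\si^{\frac{N-4}{2}}w_h\|$, using $|\na\si|\le C$ from \eqref{eq: property 2 of sigma} and the superapproximation bound \eqref{sigma_est_ph3_ph4}. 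Writing $F=z\|\si^{\frac{N-2}{2}}w_h\|^2-\|\si^{\frac{N-2}{2}}\na w_h\|^2$ and applying Lemma~\ref{lemma:complex} with $\alpha_z=\|\si^{\frac{N-2}{2}}w_h\|$, $\beta_z=\|\si^{\frac{N-2}{2}}\na w_h\|$ gives $|z|\,\|\si^{\frac{N-2}{2}}w_h\|^2+\|\si^{\frac{N-2}{2}}\na w_h\|^2\le C_\gamma|F|$, where $|F|$ is controlled by the data term $|(\psi_h,P_h(\si^{N-2}w_h))|$ together with the cross terms.

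For the data term I would introduce the auxiliary discrete function $\zeta_h=-\Delta_h^{-1}P_h(\si^{N-2}w_h)\in V_h$, so that $(\psi_h,P_h(\si^{N-2}w_h))=(\na\psi_h,\na\zeta_h)$, and split by the weighted Cauchy--Schwarz inequality $|(\na\psi_h,\na\zeta_h)|\le\|\si^{N/2}\na\psi_h\|\,\|\si^{-N/2}\na\zeta_h\|$, which already isolates the target norm. The whole statement then reduces to the elliptic claim $\|\si^{-N/2}\na\zeta_h\|\le C\lh^{(N-1)/N}\|\si^{\frac{N-2}{2}}w_h\|$. To prove it I would compare $\zeta_h$ with the continuous solution $\zeta$ of $-\Delta\zeta=\si^{N-2}w_h$ (note $\zeta_h=R_h\zeta$), apply Lemma~\ref{lemma:sigma_nabla} with $\alpha=-\tfrac N2$ to reduce $\|\si^{-N/2}\na\zeta\|$ to $\|\si^{\frac{N-2}{2}}w_h\|$ plus a singular lower-order term $\|\si^{-\frac{N+2}{2}}\zeta\|$, bound the finite element error $\|\si^{-N/2}\na(\zeta-R_h\zeta)\|$ by the weighted error estimates used in Lemma~\ref{lemma: Delta_h-1 for Di delta}, and generate the logarithm from $\|\si^{-N/2}\|_{L^2(\Om)}\le C\lh^{1/2}$ in \eqref{eq: property 1 of sigma}, combined for $N=3$ with the $L^3$/$L^{\frac32}$ bounds behind Lemma~\ref{lemma: resolvent_l32_l3grad} and the weighted Gagliardo--Nirenberg inequality Lemma~\ref{lemma: from weighted L2 to weighted in H1}; interpolating these two ingredients is what upgrades $\lh^{1/2}$ to the sharp power $\lh^{(N-1)/N}$ and forces the separate treatment of $N=2$ and $N=3$.

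Finally the cross terms, which appear only for $N=3$, would be absorbed: Lemma~\ref{lemma: from weighted L2 to weighted in H1} with $\alpha=-\tfrac12$ bounds $\|\si^{-1/2}w_h\|$ by $\|\si^{1/2}\na w_h\|$, so after Young's inequality they are dominated by a small multiple of the $\|\si^{\frac{N-2}{2}}\na w_h\|^2$ already present on the left. Dividing the resulting inequality $|z|\,\|\si^{\frac{N-2}{2}}w_h\|^2\le C\lh^{(N-1)/N}\|\si^{N/2}\na\psi_h\|\,\|\si^{\frac{N-2}{2}}w_h\|$ first by $\|\si^{\frac{N-2}{2}}w_h\|$ and then by $|z|$ yields \eqref{eq: desired estimate for uh for N}. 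I expect the main obstacle to be the elliptic claim $\|\si^{-N/2}\na\zeta_h\|\le C\lh^{(N-1)/N}\|\si^{\frac{N-2}{2}}w_h\|$: controlling the very singular weight $\si^{-N/2}$ against $\na\zeta_h$ with the \emph{exact} logarithmic exponent, in particular estimating the lower-order term $\|\si^{-\frac{N+2}{2}}\zeta\|$ near $x_0$ and carrying out the $N=3$ interpolation, with making the absorption of the cross terms quantitative (with a genuinely small constant) as a secondary technical point.
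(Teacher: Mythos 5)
Your overall skeleton (test the equation for $w_h=\Delta_h^{-1}u_h$ with $P_h(\si^{N-2}w_h)$, apply Lemma~\ref{lemma:complex}, then estimate a data term and cross terms) is the same as the paper's, but the two places where you deviate are exactly the places where the argument breaks. The more serious one is the cross terms for $N=3$. Bounding $\|\si^{-1/2}w_h\|\le C\|\si^{1/2}\na w_h\|$ via Lemma~\ref{lemma: from weighted L2 to weighted in H1} and then ``absorbing'' turns the cross term into $C\,C_\gamma\|\si^{1/2}\na w_h\|^2$ with a constant that is in no way small (it carries the Gagliardo--Nirenberg constant, the bound on $|\na\si|$, and $C_\gamma$, none of which you control), while the left-hand side supplied by Lemma~\ref{lemma:complex} contains only a single copy of $\|\si^{1/2}\na w_h\|^2$; Young's inequality cannot help because both factors collapse to the same quantity. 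This is not a ``secondary technical point'' -- it is the central difficulty of the $N=3$ case. The paper instead applies Lemma~\ref{lemma: from weighted L2 to weighted in H1} with $p=\thalf$, $p'=3$ to get $\|\si^{-1/2}w_h\|^2\le C\|w_h\|_{L^{3/2}(\Om)}\|\na w_h\|_{L^{3}(\Om)}$, absorbs $\|w_h\|_{L^{3/2}(\Om)}\le C\lh^{1/6}\|\si^{1/2}w_h\|$ into the term $|z|\,\|\si^{1/2}w_h\|^2$ using Young's inequality weighted by $|z|$, and controls $\|\na w_h\|_{L^{3}(\Om)}$ \emph{a priori} through the discrete $W^{1,3}$ resolvent bound of Lemma~\ref{lemma: resolvent_l32_l3grad} combined with $W^{1,1}(\Om)\hookrightarrow L^{3/2}(\Om)$, see \eqref{eq: desired second estimate for uh N=3}. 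That step is also the actual source of the logarithm for $N=3$: the data term is log-free there ($\lh^{1/6}$ squared times $\lh$ gives $\lh^{4/3}$, hence $\lh^{2/3}$), whereas you attribute the logarithm to the data term.

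The second issue is your detour through $\zeta_h=-\Delta_h^{-1}P_h(\si^{N-2}w_h)$. The extra integration by parts is unnecessary, and the ``elliptic claim'' $\|\si^{-N/2}\na\zeta_h\|\le C\lh^{(N-1)/N}\|\si^{(N-2)/2}w_h\|$ is not delivered by the route you sketch: Lemma~\ref{lemma:sigma_nabla} with $\alpha=-\frac{N}{2}$ leaves the lower-order term $\|\si^{-\frac{N+2}{2}}\zeta\|$, and since $\|\si^{-\frac{N+2}{2}}\|_{L^2(B_{h})}\sim h^{-1}$ in both dimensions, this term is of size $h^{-1}|\zeta(x_0)|$ for any $\zeta$ not vanishing at $x_0$ and cannot be bounded by $\lh^{(N-1)/N}\|\si^{(N-2)/2}w_h\|$. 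The paper avoids all of this by keeping the data term in the form it naturally appears: for $N=2$ it uses $|(\psi_h,w_h)|\le\|\psi_h\|\,\|w_h\|$ together with $\|\psi_h\|\le C\|\na\psi_h\|_{L^1(\Om)}\le C\lh^{1/2}\|\si\na\psi_h\|$ (Sobolev embedding $W^{1,1}(\Om)\hookrightarrow L^2(\Om)$ plus \eqref{eq: property 1 of sigma}), which is where the $\lh^{1/2}$ for $N=2$ comes from; for $N=3$ it bounds the data term by $\|\si^{1/2}\psi_h\|\,\|\si^{1/2}w_h\|$, absorbs the second factor into $|z|\,\|\si^{1/2}w_h\|^2$, and bounds the first by $C\|\si^{3/2}\na\psi_h\|$ via Lemma~\ref{lemma: from weighted L2 to weighted in H1} with $\alpha=\half$, $\beta=0$, $p=2$, with no logarithm. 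I recommend discarding the $\zeta_h$ construction and reworking the $N=3$ cross terms along the lines above.
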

\begin{proof}
Most arguments will be using $L^2$ inner-products and $L^2$ norms over the whole domain $\Om$. To simplify the notation in this proof we will denote $\|\cdot\|_{L^2(\Om)}$ by $\|\cdot\|$ and $(\cdot,\cdot)_\Om$ by $(\cdot,\cdot)$.

We will consider the cases $N=2$ and $N=3$ separately.
Thus, for $N=2$, we need to show
\begin{equation}\label{eq: desired estimate for uh N=2}
\|\Delta_h^{-1}u_h\|\le \frac{C\lh^{\frac{1}{2}}}{|z|}\norm{\sigma \na\Delta_h^{-1} \chi}.
\end{equation}
To accomplish that, we test~\eqref{eq: weak form zu_h} with $\varphi=-\Delta_h^{-2}u_h$.
We obtain
$$
-z(u_h,\Delta_h^{-2}u_h)-(\Delta_h u_h,\Delta_h^{-2}u_h)=-(\chi,\Delta_h^{-2}u_h).
$$
Using that
$
(u_h,\Delta_h^{-2}  u_h)=\|\Delta_h^{-1} u_h\|^2
$
and
$
(\Delta_h u_h,\Delta_h^{-2}u_h)=-\|\na\Delta_h^{-1}u_h\|^2
$
we obtain
\begin{equation}\label{eq: estimates for z u_h with f N=2}
-z\|\Delta_h^{-1} u_h\|^2+\|\na \Delta_h^{-1} u_h\|^2=-(\chi,\Delta_h^{-2}u_h)=-(\Delta_h^{-1}\chi,\Delta_h^{-1}u_h).
\end{equation}
Using Lemma~\ref{lemma:complex} we obtain
\[
\abs{z}\|\Delta_h^{-1}u_h\|^2+\|\na \Delta_h^{-1} u_h\|^2\le C_{\gamma}\abs{(\Delta_h^{-1}\chi,\Delta_h^{-1}u_h)}, \quad\text{for}\quad z\in \C \setminus \Sigma_{\gamma}.
\]
For the right-hand side we have by the Cauchy-Schwarz and the Young's inequalities,
$$
\abs{(\Delta_h^{-1}\chi,\Delta_h^{-1}u_h)} \le \|\Delta_h^{-1}\chi\| \|\Delta_h^{-1}u_h\|\le \frac{|z|}{2C_\gamma}\|\Delta_h^{-1}u_h\|^2+\frac{C}{|z|}\|\Delta_h^{-1}\chi\|^2.
$$
 With the   Sobolev  $W^{1,1}(\Om)\hookrightarrow L^2(\Om)$ in two space dimensions, the Poincare inequality, and using the property of $\sigma$ \eqref{eq: property 1 of sigma}, we obtain
$$
\|\Delta_h^{-1}\chi\|\le C\|\Delta_h^{-1}\chi\|_{W^{1,1}(\Om)}\le  C\|\na\Delta_h^{-1}\chi\|_{L^1(\Om)}\le C\lh^{\frac{1}{2}}\|\si\na\Delta_h^{-1}\chi\|.
$$
Thus, we have
$$
\begin{aligned}
\abs{z}\|\Delta_h^{-1}u_h\|^2+\|\na \Delta_h^{-1} u_h\|^2\le \frac{C\lh}{|z|} \|\si\na\Delta_h^{-1}\chi\|^2+\frac{|z|}{2}\|\Delta_h^{-1}u_h\|^2.
\end{aligned}
$$
Kicking back $\frac{\abs{z}}{2}\|\Delta_h^{-1}u_h\|^2$, we establish \eqref{eq: desired estimate for uh N=2} and hence the lemma for $N=2$.

For $N=3$, we need to show
\begin{equation}\label{eq: desired estimate for uh N=3}
\|\si^{\frac{1}{2}}\Delta_h^{-1}u_h\|\le \frac{C\lh^{\frac{2}{3}}}{|z|}\norm{\si^{\frac{3}{2}} \na\Delta_h^{-1} \chi}.
\end{equation}
To accomplish that, we test~\eqref{eq: weak form zu_h} with $\varphi=-\Delta_h^{-1}P_h(\si\Delta_h^{-1}u_h)$.
We obtain
$$
-z(u_h,\Delta_h^{-1} P_h(\si\Delta_h^{-1}u_h))-(\Delta_h u_h,\Delta_h^{-1} P_h(\si \Delta_h^{-1}u_h))=-(\chi,\Delta_h^{-1} P_h(\si\Delta_h^{-1}u_h)).
$$
Using that
$$
(u_h,\Delta_h^{-1} P_h(\si\Delta_h^{-1} u_h))=(\Delta_h^{-1}u_h, P_h(\si\Delta_h^{-1} u_h))=\|\si^{\frac{1}{2}}\Delta_h^{-1} u_h\|^2
$$
and
\[
\begin{aligned}
(\Delta_h u_h,\Delta_h^{-1} P_h(\si \Delta_h^{-1}u_h))&=(\Delta_h \Delta_h^{-1}u_h, P_h(\si \Delta_h^{-1}u_h))\\
&=-(\na \Delta_h^{-1}u_h,\na P_h(\si \Delta_h^{-1}u_h))\\
&=-(\na \Delta_h^{-1}u_h,\na (\si \Delta_h^{-1}u_h))-(\na \Delta_h^{-1}u_h,\na( P_h-\operatorname{Id})(\si \Delta_h^{-1}u_h))\\
&=-\|\si^{\frac{1}{2}}\na \Delta_h^{-1} u_h\|^2
-(\na \Delta_h^{-1}u_h,\na \si \Delta_h^{-1}u_h)\\
&\qquad \qquad \qquad \qquad -(\si^{\frac{1}{2}}\na \Delta_h^{-1}u_h,\si^{-\frac{1}{2}}\na( P_h-\operatorname{Id})(\si \Delta_h^{-1}u_h)),
\end{aligned}
\]
we obtain
\begin{equation}\label{eq: estimates for z u_h with f N=3}
-z\|\sigma^{\frac{1}{2}} \Delta_h^{-1} u_h\|^2+\|\sigma^{\frac{1}{2}} \na \Delta_h^{-1} u_h\|^2=F,
\end{equation}
where
\[
\begin{aligned}
F=F_1+F_2+F_3&:=-(\chi,\Delta_h^{-1} P_h(\si\Delta_h^{-1}u_h))-(\na \Delta_h^{-1}u_h,\na \si \Delta_h^{-1}u_h)\\&-(\si^{\frac{1}{2}}\na \Delta_h^{-1}u_h,\si^{-\frac{1}{2}}\na( P_h-\operatorname{Id})(\si \Delta_h^{-1}u_h)).
\end{aligned}
\]
Using Lemma~\ref{lemma:complex} we conclude
\begin{equation}\label{eq: estimate of the second step with f}
\abs{z}\|\si^{\frac{1}{2}} \Delta_h^{-1} u_h\|^2+\|\si^{\frac{1}{2}}\na \Delta_h^{-1} u_h\|^2\le C_\gamma|F|, \quad\text{for}\quad z\in \mathbb{C}\setminus \Sigma_{\gamma}.
\end{equation}
By the Cauchy-Schwarz and the Young's inequalities,
\begin{align*}
|F_1|\le \|\si^{\frac{1}{2}}\Delta_h^{-1}\chi\| \|\si^{\frac{1}{2}}\Delta_h^{-1}u_h\|&\le \frac{CC_\gamma}{|z|}\|\si^{\frac{1}{2}}\Delta_h^{-1}\chi\|^2+ \frac{|z|}{4C_\gamma}\|\si^{\frac{1}{2}}\Delta_h^{-1}u_h\|^2\\
&\le \frac{CC_\gamma}{|z|}\|\si^{\frac{3}{2}}\na\Delta_h^{-1}\chi\|^2+ \frac{|z|}{4C_\gamma}\|\si^{\frac{1}{2}}\Delta_h^{-1}u_h\|^2,
\end{align*}
where in the last step we again use Lemma \ref{lemma: from weighted L2 to weighted in H1} with $\alpha=\frac{1}{2}$, $\beta=0$, and $p=2$.
To estimate $F_2$ we use  the Cauchy-Schwarz and the Young's inequalities, as well as the fact that $\abs{\nabla \si}\le C$.
$$
|F_2|\le C\|\si^{\frac{1}{2}}\na\Delta_h^{-1}u_h\| \|\si^{-\frac{1}{2}}\Delta_h^{-1}u_h\|\le \frac{1}{4C_\gamma}\|\si^{\frac{1}{2}}\na\Delta_h^{-1}u_h\|^2+CC_\gamma\|\si^{-\frac{1}{2}}\Delta_h^{-1}u_h\|^2.
$$
Using Lemma \ref{lemma: from weighted L2 to weighted in H1} with $\alpha=\beta=-\frac{1}{2}$, $p=\frac{3}{2}$ and $p'=3$, we obtain
$$
\|\si^{-\frac{1}{2}}\Delta_h^{-1}u_h\|^2\le C\|\Delta_h^{-1}u_h\|_{L^{\frac{3}{2}}(\Om)}\|\na\Delta_h^{-1}u_h\|_{L^3(\Om)}.
$$
Using the properties of $\si$ and the H\"{o}lder inequality, we have
$$
\|\Delta_h^{-1}u_h\|_{L^{\frac{3}{2}}(\Om)}\le C\lh^{\frac{1}{6}}\|\si^{\frac{1}{2}}\Delta_h^{-1}u_h\|,
$$
and as a result
\begin{equation}\label{eq: estimate for f_2 after all steps}
 |F_2|\le \frac{1}{4C_\gamma}\|\si^{\frac{1}{2}}\na\Delta_h^{-1}u_h\|^2+ \frac{|z|}{4C_\gamma}\|\si^{\frac{1}{2}}\Delta_h^{-1}u_h\|^2+\frac{C}{|z|}\lh^{\frac{1}{3}}\|\na\Delta_h^{-1}u_h\|^2_{L^3(\Om)}.
\end{equation}
Finally, using the Cauchy-Schwarz inequality, Lemma \ref{lemma:super_ih_ph},  and the Young's inequality, we obtain
$$
|F_3|\le C\|\si^{\frac{1}{2}}\na \Delta_h^{-1}u_h\|\|\si^{-\frac{1}{2}}\Delta_h^{-1}u_h\|\le \frac{1}{4C_\gamma}\|\si^{\frac{1}{2}}\na \Delta_h^{-1}u_h\|^2+C_\gamma\|\si^{-\frac{1}{2}}\Delta_h^{-1}u_h\|^2.
$$
Similarly to the estimate of $F_2$ above we obtain,
\begin{equation}\label{eq: estimate for f_3 after all steps}
 |F_3|\le \frac{1}{4C_\gamma}\|\si^{\frac{1}{2}}\na\Delta_h^{-1}u_h\|^2+ \frac{|z|}{4C_\gamma}\|\si^{\frac{1}{2}}\Delta_h^{-1}u_h\|^2+\frac{C}{|z|}\lh^{\frac{1}{3}}\|\na\Delta_h^{-1}u_h\|^2_{L^3(\Om)}.
\end{equation}
Combining estimates for $F_1$, $F_2$, and $F_3$, inserting them into \eqref{eq: estimate of the second step with f} and kicking back, we obtain
\begin{equation}\label{eq: sigma3/2 Gh after second step N=3}
\abs{z}\|\si^{\frac{1}{2}} \Delta_h^{-1} u_h\|^2+\|\si^{\frac{1}{2}}\na\Delta^{-1} u_h\|^2\le \frac{C}{|z|}\|\si^{\frac{3}{2}}\na\Delta_h^{-1}\chi\|^2 +\frac{C}{|z|}\lh^{\frac{1}{3}}\|\na\Delta_h^{-1}u_h\|^2_{L^3(\Om)}.
\end{equation}
Thus, in order to establish the lemma for $N=3$, we need to show
\begin{equation}\label{eq: desired second estimate for uh N=3}
\|\na\Delta_h^{-1}u_h\|_{L^3(\Om)}\le C\lh^{\frac{1}{2}}\|\si^{\frac{3}{2}}\na\Delta_h^{-1}\chi\|.
\end{equation}
This estimates follows by Lemma \ref{lemma: resolvent_l32_l3grad}, Sobolev embedding theorem $W^{1,1}(\Omega) \hookrightarrow L^{\frac{3}{2}}(\Omega)$ combined with the Poincare inequality, and the properties of $\si$. Indeed,
$$
\|\na\Delta_h^{-1}u_h\|_{L^3(\Om)}\le C\|\Delta_h^{-1}\chi\|_{L^{\frac{3}{2}}(\Om)}\le C\|\na\Delta_h^{-1}\chi\|_{L^1(\Om)}\le C\lh^{\frac{1}{2}}\|\si^{\frac{3}{2}}\na\Delta_h^{-1}\chi\|.
$$
This concludes the proof of the lemma.
\end{proof}

\subsection{Proof of Theorem \ref{thm: very weighted_Resolvent}}

For an arbitrary $\chi \in \mathbb{V}_h$, the solution to resolvent equation $u_h$ satisfies
\begin{equation}\label{eq: weak form zu_h N=3}
z(u_h,\varphi)+(\Delta_h u_h,\varphi)=(\chi,\varphi), \quad \text{for all }\; \varphi\in \mathbb{V}_h.
\end{equation}
First we test \eqref{eq: weak form zu_h N=3}
with $\varphi=\Delta_h^{-1} P_h(\sigma^Nu_h)$ to obtain
$$
z(u_h,\Delta_h^{-1} P_h(\sigma^Nu_h))+(\Delta_h u_h,\Delta_h^{-1} P_h(\sigma^Nu_h))=(\chi,\Delta_h^{-1} P_h(\sigma^Nu_h)).
$$
Using that
$$
(\Delta_h u_h,\Delta_h^{-1} P_h(\sigma^Nu_h))=(u_h, P_h(\sigma^Nu_h))=(u_h,\sigma^Nu_h)=\|\si^{\frac{N}{2}} u_h\|^2
$$
and
\begin{align*}
(u_h,&\Delta_h^{-1} P_h(\sigma^Nu_h))=(\Delta_h^{-1}u_h, P_h(\sigma^Nu_h))=
(\Delta_h^{-1}u_h, \si^Nu_h)=(\si^N\Delta_h^{-1}u_h, \Delta_h\Delta_h^{-1}u_h)\\
&=-(\na (P_h\sigma^N\Delta_h^{-1}u_h), \na\Delta_h^{-1}u_h)\\
&=-(\na (\sigma^N\Delta_h^{-1}u_h), \na\Delta_h^{-1}u_h)-(\na (P_h-\operatorname{Id})(\si^N\Delta_h^{-1}u_h), \na\Delta_h^{-1}u_h)\\
&=-\|\sigma^{\frac{N}{2}}\na\Delta_h^{-1}u_h\|^2-N(\si^{N-1}\na \sigma\Delta_h^{-1}u_h, \na\Delta_h^{-1}u_h)-(\na (P_h-\operatorname{Id})(\sigma^N\Delta_h^{-1}u_h), \na\Delta_h^{-1}u_h),
\end{align*}
we obtain
\begin{equation}\label{eq: estimates for z u_h with F N=3}
-z\|\sigma^{\frac{N}{2}} \na\Delta_h^{-1} u_h\|^2+\|\sigma^{\frac{N}{2}}  u_h\|^2=F,
\end{equation}
where
$$
\begin{aligned}
F&=F_1+F_2+F_3\\
&:=(\chi,\Delta_h^{-1} P_h(\si^Nu_h))+Nz(\si^{N-1}\na \si\Delta_h^{-1}u_h, \na\Delta_h^{-1}u_h)
+z(\si^{-\frac{N}{2}}\na (P_h-\operatorname{Id})(\si^N\Delta_h^{-1}u_h), \si^{\frac{N}{2}}\na\Delta_h^{-1}u_h).
\end{aligned}
$$
By Lemma~\ref{lemma:complex} we conclude
$$
\abs{z}\|\si^{\frac{N}{2}} \na\Delta_h^{-1} u_h\|^2+\|\si^{\frac{N}{2}} u_h\|^2\le C_\gamma|F|, \quad\text{for}\quad z\in \mathbb{C} \setminus \Sigma_{\gamma}.
$$
To estimate $F_1$ we notice that
\[
\begin{aligned}
(\chi,\Delta_h^{-1} P_h(\si^Nu_h))&=(\Delta_h^{-1}\chi, P_h(\si^Nu_h))=(\si^N\Delta_h^{-1}\chi, u_h)\\
&=(P_h(\si^N\Delta_h^{-1}\chi),\Delta_h\Delta_h^{-1} u_h)\\
&=-(\na P_h(\sigma^N\Delta_h^{-1}\chi),\na\Delta_h^{-1} u_h)\\
&=-(\na (\sigma^N\Delta_h^{-1}\chi),\na\Delta_h^{-1} u_h)-(\na (P_h-\operatorname{Id})(\sigma^N\Delta_h^{-1}\chi),\na\Delta_h^{-1} u_h)\\
&=-(\sigma^N\na\Delta_h^{-1}\chi,\na\Delta_h^{-1} u_h)-N(\si^{N-1}\na\si\Delta_h^{-1}\chi,\na\Delta_h^{-1} u_h)\\
&\qquad\qquad\qquad\qquad\qquad -(\si^{-\frac{N}{2}}\na (P_h-\operatorname{Id})(\si^N\Delta_h^{-1}\chi),\si^{\frac{N}{2}}\na\Delta_h^{-1} u_h).
\end{aligned}
\]
Using $\abs{\nabla \sigma} \le C$, the Cauchy-Schwarz inequality, and the Young's inequality, we obtain,
\[
\begin{aligned}
|F_1| &\le  \|\si^{\frac{N}{2}} \na\Delta_h^{-1}u_h\|\|\si^{\frac{N}{2}}\na\Delta_h^{-1}\chi\|+C\|\si^{\frac{N-2}{2}}\Delta_h^{-1}\chi\|\|\si^{\frac{N}{2}}\na\Delta_h^{-1} u_h\|\\
&\qquad\qquad\qquad\qquad\qquad+\|\si^{-\frac{N}{2}}\na (P_h-\operatorname{Id})(\sigma^N\Delta_h^{-1}\chi)\|\|\si^{\frac{N}{2}}\na\Delta_h^{-1} u_h\|\\
&\le \frac{CC_{\gamma}}{\abs{z}}\left(\|\si^{\frac{N}{2}}\na\Delta_h^{-1}\chi\|^2+\|\si^{\frac{N-2}{2}}\Delta_h^{-1}\chi\|^2\right)
+\frac{\abs{z}}{4C_\gamma}\|\si^{\frac{N}{2}} \na\Delta_h^{-1} u_h\|^2,
\end{aligned}
\]
where in the last step we used Lemma \ref{lemma:super_ih_ph} to obtain
$$
\|\si^{-\frac{N}{2}}\na (P_h-\operatorname{Id})(\sigma^N\Delta_h^{-1}\chi)\|\le C\|\si^{\frac{N-2}{2}}\Delta_h^{-1}\chi\|.
$$
For $N=2$, using the   Sobolev  embedding $W^{1,1}(\Omega)\hookrightarrow L^2(\Omega)$ and the Poincare inequality, we obtain
$$
\|v_h\|\le C\|v_h\|_{W^{1,1}(\Om)}\le C\|\na v_h\|_{L^1(\Om)}, \quad \text{for all }\; v_h\in V_h.
$$
Using in addition the property of $\sigma$ \eqref{eq: property 1 of sigma}, we obtain
$$
\|\Delta_h^{-1}\chi\|\le C\|\na \Delta_h^{-1}\chi\|_{L^1(\Om)}\le C\abs{\ln{h}}^{\frac{1}{2}}\|\si\na \Delta_h^{-1}\chi\|.
$$
For $N=3$, we use Lemma \ref{lemma: from weighted L2 to weighted in H1} with $\alpha=\frac{1}{2}$, $\beta=0$, and $p=2$, to obtain
$$
\|\si^{\frac{1}{2}}\Delta_h^{-1}\chi\|\le C\|\si^{\frac{3}{2}}\na \Delta_h^{-1}\chi\|.
$$
Thus,
$$
|F_1|\le  \frac{C C_\gamma\lh^{3-N}}{\abs{z}}\|\si^{\frac{N}{2}}\na\Delta_h^{-1}\chi\|^2+\frac{\abs{z}}{4C_\gamma}\|\si^{\frac{N}{2}} \na\Delta_h^{-1} u_h\|^2.
$$
To estimate $F_2$ we use  the Cauchy-Schwarz and the Young's inequalities,
$$
|F_2|\le C|z|\|\si^{\frac{N-2}{2}}\Delta_h^{-1}u_h\| \|\si^{\frac{N}{2}}\na\Delta_h^{-1}u_h\|\le \frac{\abs{z}}{4C_\gamma}\|\si^{\frac{N}{2}} \na\Delta_h^{-1} u_h\|^2+C C_{\gamma}\abs{z}\|\si^{\frac{N-2}{2}}\Delta_h^{-1}u_h\|^2.
$$
To estimate $F_3$ we use Lemma \ref{lemma:super_ih_ph}, the Cauchy-Schwarz and the Young's inequalities,
$$
|F_3|\le C |z|\|\si^{-\frac{N}{2}}\na (P_h-\operatorname{Id})(\sigma^N\Delta_h^{-1}u_h)\|\|\si^{\frac{N}{2}}\na\Delta_h^{-1}u_h\|\le C_{\gamma}\abs{z}\|\si^{\frac{N-2}{2}}\Delta_h^{-1}u_h\|^2 + \frac{\abs{z}}{4C_\gamma}\|\si^{\frac{N}{2}} \na\Delta_h^{-1} u_h\|^2.
$$
Combining estimates for $F_1$, $F_2$, $F_3$ and kicking back, we obtain
\begin{equation}\label{eq: sigma3/2 Gh after first step N=3}
\abs{z}\|\si^{\frac{N}{2}} \na\Delta_h^{-1} u_h\|^2+\|\si^{\frac{N}{2}}  u_h\|^2\le \frac{C \lh^{3-N}}{\abs{z}}\norm{\si^{\frac{N}{2}} \na\Delta_h^{-1}\chi}^2+C\abs{z}\|\si^{\frac{N-2}{2}}\Delta_h^{-1}u_h\|^2.
\end{equation}
Now applying Lemma \ref{lemma: needed for last step} to the last term concludes the proof of the theorem.

\section{Discrete maximal parabolic estimates}\label{sec: discrete max}

In this section we state stability results for inhomogeneous problems that are central in establishing our main results.
Since we apply the following results for different norms on $V_h$, namely, for $L^p(\Om)$, weighted $L^2(\Om)$, and weighted $H^{-1}(\Omega)$ norms, we state them for a general Banach norm $\vertiii{\cdot}$.

Let  $\vertiii{\cdot}$ be a norm on $V_h$ (naturally extended to a norm on $\mathbb{V}_h$) such that for some $\gamma\in(0,\frac{\pi}{2})$ the following resolvent estimate holds,
\begin{equation}\label{eq: resolvent in Banach space}
\vertiii{(z+\Delta_h)^{-1}\chi} \le \frac{M_h}{\abs{z}} \vertiii{\chi}, \quad \text{for all }\; z \in \mathbb{C} \setminus \Sigma_\gamma, \quad \chi \in \mathbb{V}_h,
\end{equation}
where $\Sigma_{\gamma}$ is defined in \eqref{eq: definition of sigma} and
 the constant $M_h$ is independent of $z$.

This assumption is fulfilled for $\vertiii{\cdot}=\norm{\cdot}_{L^p(\Omega)}$, $1\le p \le \infty$, with a constant $M_h \le C$ independent of $h$, see~\cite{LiB_SunW_2017a}, for $\vertiii{\cdot}=\norm{\sigma^{\frac{N}{2}}(\cdot)}_{L^2(\Omega)}$ with $M_h \le C \lh$, see~\cite[Theorem 7]{LeykekhmanD_VexlerB_2016d}, and for $\vertiii{\cdot}=\norm{\sigma^{\frac{N}{2}}\na\Delta_h^{-1}(\cdot)}_{L^2(\Omega)}$ with $M_h \le C \lh^{\frac{N-1}{N}}$, see Theorem~\ref{thm: very weighted_Resolvent}.

We consider the inhomogeneous heat equation \eqref{eq: heat equation},  with $u_0=0$ and its discrete approximation $u_{kh} \in \Xkh$ defined by
\begin{equation}\label{eq: dGr nonhomogeneous equation fully}
B(u_{kh},\varphi_{kh})=(f,\varphi_{kh}),\quad \text{for all }\; \varphi_{kh}\in \Xkh.
\end{equation}
The next result is a discrete maximal parabolic regularity result \cite[Theorem~14]{LeykekhmanD_VexlerB_2016b}.
\begin{lemma}[Discrete maximal parabolic regularity]\label{lemma: fully discrete_maximal_parabolic}
Let $\vertiii{\cdot}$ be a norm on $V_h$ fulfilling the resolvent estimate~\eqref{eq: resolvent in Banach space} and let $1 \le s \le \infty$. Let $u_{kh}$ be a solution  of \eqref{eq: dGr nonhomogeneous equation fully}. Then, there exists a constant $C$ independent of $k$ and $h$ such that
\begin{equation*}
\begin{aligned}
\left(\sum_{m=1}^M\int_{I_m}\vertiii{\pa_t u_{kh}(t)}^sdt\right)^{\frac{1}{s}}+\left(\sum_{m=1}^M\int_{I_m}\vertiii{\Delta_h u_{kh}(t)}^sdt\right)^{\frac{1}{s}}&+\left(\sum_{m=1}^Mk_m\vertiii{k_m^{-1}[u_{kh}]_{m-1}}^s\right)^{\frac{1}{s}}\\
&\le C M_h \lk\left(\int_I\vertiii{P_h f(t)}^sdt\right)^{\frac{1}{s}},
\end{aligned}
\end{equation*}
with obvious change of notation in the case $s=\infty$. For $m=1$ the jump is understood as $[u_{kh}]_0=u_{kh,0}^+$.
\end{lemma}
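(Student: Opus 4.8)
The plan is to derive the estimate entirely from the sectorial resolvent bound~\eqref{eq: resolvent in Banach space}, treating the norm $\vertiii{\cdot}$ abstractly, so that the geometry of $\Om$ and the finite element space enter only through the constant $M_h$. Since all three quantities on the left are governed by the same discrete solution operator, it suffices to establish the bound for the dominant term $\left(\sum_m \int_{I_m}\vertiii{\Delta_h u_{kh}}^s\right)^{1/s}$ and to treat the $\pa_t$ and jump contributions by the analogous representation: by the dG equation on each $I_m$ the function $\pa_t u_{kh}$ is expressed through $\Delta_h u_{kh}$ and $P_h f$, and the jump $k_m^{-1}[u_{kh}]_{m-1}$ admits a kernel bound of the same type. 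I would first reduce to the two endpoints $s=\infty$ and $s=1$, recovering intermediate $s$ by interpolation; moreover, since the adjoint of the dG($q$) stepping is again a dG($q$) scheme (cf.\ the dual form~\eqref{eq:B_dual}) with the same resolvent estimate, because $\Delta_h$ is self-adjoint and $\Sigma_\gamma$ is symmetric, the case $s=1$ follows from $s=\infty$ by duality. The whole argument thus concentrates on the uniform-in-time bound $\sup_m \vertiii{\Delta_h u_{kh}}$.

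The central step is a representation of the discrete solution operator. On a reference interval the dG($q$) scheme reduces, after inverting the local $(q+1)\times(q+1)$ temporal block, to the action of rational functions of the scaled operator $k_m\Delta_h$; writing $u_{kh}$ on $I_m$ as a discrete variation-of-constants sum over the preceding intervals expresses $\Delta_h u_{kh}(t)$ as a discrete convolution $\sum_{j\le m} K_{m,j}\, P_h f_j$. Each kernel operator admits a Dunford--Cauchy representation
\[
K_{m,j} = \frac{1}{2\pi i}\int_{\Gamma} \rho_{m,j}(z)\,(z+\Delta_h)^{-1}\,dz
\]
over a sectorial contour $\Gamma\subset\C\setminus\Sigma_\gamma$, where $\rho_{m,j}$ is a scalar rational factor coming from the A-stability of the dG time stepping. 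Applying~\eqref{eq: resolvent in Banach space} under the integral sign bounds $\vertiii{K_{m,j}\chi}$ by $M_h$ times a purely scalar integral, so that all $h$-dependence is collected into the single factor $M_h$; the analyticity and sectoriality of the dG rational factors are exactly what guarantee convergence of these contour integrals and a kernel decay of the form $\vertiii{K_{m,j}}\lesssim M_h/(t_m-t_{j-1})$.

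With the kernel bound in hand, the uniform estimate follows from the discrete summation
\[
\vertiii{\Delta_h u_{kh}(t)} \le C M_h \sum_{j\le m} \frac{k_j}{t_m-t_{j-1}}\,\sup_j\vertiii{P_h f_j},
\]
and the harmonic-type sum $\sum_{j\le m} k_j/(t_m-t_{j-1})$ is $O(\lk)$ by the mesh conditions. This is precisely where the logarithmic factor $\lk$ is produced and where the mesh regularity hypotheses (i)--(iii) are indispensable: on a nonuniform mesh the clean convolution structure degrades, and one must use $\kappa^{-1}\le k_m/k_{m+1}\le\kappa$ together with $k_{\min}\ge ck^\beta$ to compare neighboring steps and to keep the scalar contour integrals uniformly bounded. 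I expect the main obstacle to be exactly this: establishing the kernel decay and the convergence of the Dunford integrals uniformly over the nonuniform time steps \emph{and} over the polynomial degree $q$, since for $q\ge 1$ the local solve couples the $q+1$ temporal degrees of freedom and the rational factors $\rho_{m,j}$ must be shown to inherit the sector $\gamma$ from $\Delta_h$ without degrading the constant. Once the $s=\infty$ bound is secured, duality and interpolation deliver the full family of estimates, with the $\pa_t$ and jump terms handled by the same representation as noted above.
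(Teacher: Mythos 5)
The paper does not prove this lemma: it is imported verbatim as \cite[Theorem~14]{LeykekhmanD_VexlerB_2016b}, so there is no in-paper argument to compare against. Your outline is nonetheless a faithful reconstruction of the strategy used in that reference --- represent the dG($q$) step operators as Dunford--Cauchy integrals of rational functions of $k_m\Delta_h$ over a contour in $\C\setminus\Sigma_\gamma$, push the abstract resolvent bound~\eqref{eq: resolvent in Banach space} under the integral so that all $h$-dependence is absorbed into $M_h$, obtain the kernel decay $\vertiii{K_{m,j}}\lesssim M_h/(t_m-t_{j-1})$, and produce the factor $\lk$ from the harmonic sum. Two remarks on the gaps. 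First, the entire technical content lies in the step you defer: constructing the rational factors $\rho_{m,j}$ for dG($q$) with $q\ge 1$ (where the local $(q+1)\times(q+1)$ solve couples the temporal degrees of freedom), verifying their sectorial decay uniformly over nonuniform steps, and handling the diagonal block $K_{m,m}$ separately; as a proof this is a roadmap, not an argument. Second, two of your reductions are dispensable and slightly imprecise as stated: the kernel bound $\vertiii{K_{m,j}}\lesssim M_h/(t_m-t_{j-1})$ already yields all $1\le s\le\infty$ directly via a Schur/Young-type summation (the $s=1$ case by Fubini in $j$ and $m$), so the duality-plus-interpolation detour --- which would additionally require the resolvent estimate for the dual norm of $\vertiii{\cdot}$, e.g.\ for the dual of $\|\sigma^{N/2}\nabla\Delta_h^{-1}(\cdot)\|_{L^2(\Omega)}$ --- is not needed; and the recovery of $\pa_t u_{kh}$ from the equation must go through the strong form of~\eqref{eq: dGr nonhomogeneous equation fully} on $I_m$, which involves the temporal $L^2$-projection onto $\Ppol{q-1}(I_m)$ and the jump contribution, not a pointwise identity $\pa_t u_{kh}=\Delta_h u_{kh}+P_hf$.
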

%

\section{Proofs of pointwise global best approximation results}\label{sec: proofs global results}

We are now ready to establish our main results.

\subsection{Proof of Theorem \ref{thm:global_best_approx}}
\begin{proof}
Let $\tilde{t} \in(0,T]$ and let $x_0 \in \Omega$ be an arbitrary but fixed point. Without loss of generality we assume $\tilde{t} \in I_M=(t_{M-1},T]$. Note, that the case $\tilde t = 0$ is trivial, since $u_{kh}(0) = P_h u_0$ and the statement of the theorem follows by  the stability of the $L^2$ projection in the $W^{1,\infty}(\Omega)$ norm. This stability result is a consequence of the stability in the $L^\infty(\Omega)$ norm, see~\cite{DouglasDupontWahlbin:1975} and the standard inverse inequality.

We consider the following regularized Green's function
\begin{equation}\label{eq: heat with dirac on the RHS}
\begin{aligned}
-\tilde{g}_t(t,x)-\Delta \tilde{g}(t,x) &= D\tilde{\delta}_{x_0}(x)\tilde{\theta}(t)& (t,x)\in \IOm,\;  \\
\tilde{g}(t,x)&=0, &(t,x) \in I\times\pa\Omega, \\
\tilde{g}(T,x)&=0, & x \in \Omega,
\end{aligned}
\end{equation}
where $\tilde{\delta}_{x_0}$ is the smoothed Dirac introduced in \eqref{eq: definition delta}, $D$ denotes an arbitrary partial derivative in space, and $\tilde{\theta}\in C^\infty(0,T)$ is the regularized Delta function in time with properties $\supp(\tilde{\theta})\subset I_M$, $\|\tilde{\theta}\|_{L^1(I_M)}\le C$ and
$$
(\tilde{\theta},\varphi_k)_{I_M}=\varphi_k(\tilde{t}),\quad \text{for all }\; \varphi_k\in X^q_k.
$$
Let $\tilde{g}_{kh}$ be dG($q$)cG($r$) approximation of $\tilde{g}$, i.e. $B(\varphi_{kh},\tilde{g}-\tilde{g}_{kh})=0$.
Then we have
$$
\begin{aligned}
-Du_{kh}(\tilde{t},x_0)&=(u_{kh}, D\tilde{\delta}_{x_0}\tilde{\theta})=B(u_{kh},\tilde{g})=B(u_{kh},\tilde{g}_{kh})=B(u,\tilde{g}_{kh})\\
&=-\sum_{m=1}^{M}(u,\pa_t\tilde{g}_{kh})_{\ImOm}+(\na  u, \na \tilde{g}_{kh})_{\IOm}-\sum_{m=1}^{M}(u_m,[\tilde{g}_{kh}]_m)_{\Om}=J_1+J_2+J_3,
\end{aligned}
$$
where  in the sum with jumps we included the last term by setting $\tilde{g}_{kh,M+1} = 0$ and defining
consequently $[\tilde{g}_{kh}]_M = -\tilde{g}_{kh,M}$.
Using the H\"{o}lder inequality, stability of the Ritz projection in $W^{1,\infty}(\Omega)$ from~\cite{JGuzman_DLeykekhman_JRossmann_AHSchatz_2009a} and the $L^\infty$ error estimate from Lemma~\ref{lemma: approximation of Ritz} we have
\[
\begin{aligned}
    J_1&=-\sum_{m=1}^{M}\bigg((R_hu,\Delta_h\Delta_h^{-1}\pa_t\tilde{g}_{kh})_{\ImOm}+((I-R_h)u,\pa_t\tilde{g}_{kh})_{\ImOm}\bigg)\\
&=\sum_{m=1}^{M}\bigg((\na R_hu,\na\Delta_h^{-1}\pa_t\tilde{g}_{kh})_{\ImOm}-((I-R_h)u,\pa_t\tilde{g}_{kh})_{\ImOm}\bigg)\\
&\le \sum_{m=1}^{M}\bigg(\|\na u\|_{L^\infty(\ImOm)}\|\na\Delta_h^{-1}\pa_t\tilde{g}_{kh}\|_{L^1(I_m; L^1(\Om))}+\|(I-R_h)u\|_{L^\infty(\ImOm)}\|\pa_t\tilde{g}_{kh}\|_{L^1(I_m; L^1(\Om))}\bigg)\\
&\le C\lh^{\frac{1}{2}}\|\na u\|_{L^\infty(I\times\Om)}\sum_{m=1}^{M}\|\si^{\frac{N}{2}}\na\Delta_h^{-1}\pa_t\tilde{g}_{kh}\|_{L^1(I_m; L^2(\Om))}\\
&\ +Ch\lh \|\na u\|_{L^\infty(I\times\Om)}\sum_{m=1}^{M}\|\pa_t\tilde{g}_{kh}\|_{L^1(I_m; L^1(\Om))}.
\end{aligned}
\]
Applying the discrete maximal parabolic regularity result from Lemma~\ref{lemma: fully discrete_maximal_parabolic} with respect to $\norm{\sigma^{\frac{N}{2}}\na\Delta_h^{-1}(\cdot)}_{L^2(\Omega)}$ and with respect to the $L^1(\Omega)$ norm we get
\begin{equation}\label{eq: estimate for J1 pointwise time}
\begin{aligned}
J_1 & \le C\lk\|\na u\|_{L^\infty(I\times\Om)}\left(\lh^{\half+\frac{N-1}{N}}\|\si^{\frac{N}{2}}\na\Delta_h^{-1} P_hD\tilde{\delta}\|_{L^2(\Om)}\|\tilde{\theta}\|_{L^1(I_M)}
+h\lh\| P_h D\tilde{\delta}\|_{L^1(\Om)}\|\tilde{\theta}\|_{L^1(I_M)}\right)\\
& \le C\lh^{\frac{2N-1}{N}}\lk\|\na u\|_{L^\infty(I\times\Om)},
\end{aligned}
\end{equation}
where in the last step we used Lemma~\ref{lemma: Delta_h-1 for Di delta}, Lemma~\ref{lemma:sigma_delta} and the fact that $\|\tilde{\theta}\|_{L^1(I_M)} \le C$.
Similarly, using the H\"{o}lder inequality,  properties of $\sigma$, Lemma \ref{lemma: fully discrete_maximal_parabolic}, and Lemma \ref{lemma: Delta_h-1 for Di delta}, we have
\begin{equation}\label{eq: estimate for J2 pointwise time}
\begin{aligned}
    J_2= (\na  u, \na g_{kh})_{\IOm}
&\le \|\na u\|_{L^\infty(\IOm))}\|\na \tilde{g}_{kh}\|_{L^1(I; L^1(\Om))}\\
&\le C\lh^{\frac{1}{2}}\|\na u\|_{L^\infty(I\times\Om)}\|\si^{\frac{N}{2}}\na \tilde{g}_{kh}\|_{L^1(I; L^2(\Om))}\\
&\le C\lh^{\frac{1}{2}+\frac{N-1}{N}}\lk\|\na u\|_{L^\infty(I\times\Om)}\|\si^{\frac{N}{2}}\na\Delta_h^{-1}P_h D\tilde{\delta}\|_{L^2(\Om)}\|\tilde{\theta}\|_{L^1(I_M)}\\
&\le C\lh^{\frac{2N-1}{N}}\lk\|\na u\|_{L^\infty(I\times\Om)}.
\end{aligned}
\end{equation}
Similarly to the estimate of $J_1$, using the H\"{o}lder inequality, properties of $\sigma$, and Lemma~\ref{lemma: approximation of Ritz} we have
\[
\begin{aligned}
    J_3&=-\sum_{m=1}^{M}\bigg((R_h u_m,[\tilde{g}_{kh}]_m)_{\Om}+((I-R_h) u_m,[\tilde{g}_{kh}]_m)_{\Om}\bigg)\\
&=\sum_{m=1}^{M}\bigg((\na u_m,[\na\Delta_h^{-1} \tilde{g}_{kh}]_m)_{\Om}-((I-R_h) u_m,[\tilde{g}_{kh}]_m)_{\Om}\bigg)\\
&\le \sum_{m=1}^{M}\|\na u_m\|_{L^\infty(\Om)}\|[\na\Delta_h^{-1} \tilde{g}_{kh}]_m\|_{L^1(\Om)}+\sum_{m=1}^{M}\|(I-R_h) u_m\|_{L^\infty(\Om)}\|[ \tilde{g}_{kh}]_m\|_{L^1(\Om)}\\
&\le C\lh^{\frac{1}{2}}\|\na u\|_{L^\infty(I\times\Om)}\sum_{m=1}^{M}\|\si^{\frac{N}{2}}[\na\Delta_h^{-1} \tilde{g}_{kh}]_m\|_{L^2(\Om)}+Ch\lh\|\na u\|_{L^\infty(I \times \Om)}\sum_{m=1}^{M}\|[ \tilde{g}_{kh}]_m\|_{L^1(\Om)}.
\end{aligned}
\]
Applying the discrete maximal parabolic regularity result from Lemma~\ref{lemma: fully discrete_maximal_parabolic} with respect to $\norm{\sigma^{\frac{N}{2}}\na\Delta_h^{-1}(\cdot)}_{L^2(\Omega)}$ and with respect to the $L^1(\Omega)$ norm we get
\begin{equation}\label{eq: estimate for J3 pointwise time}
\begin{aligned}
J_3 &\le C\lk\|\na u\|_{L^\infty(I\times\Om)}\left(\lh^{\half+\frac{N-1}{N}}\|\si^{\frac{N}{2}}\na\Delta_h^{-1} P_h D\tilde{\delta}\|_{L^2(\Om)}\|\tilde{\theta}\|_{L^1(I_M)}+ h\lh\|P_h D\tilde{\delta}\|_{L^1(\Om)}\|\tilde{\theta}\|_{L^1(I_M)}\right)\\
&\le C \lh^{\frac{2N-1}{N}}\lk\|\na u\|_{L^\infty(I\times\Om)},
\end{aligned}
\end{equation}
where in the last step we again used Lemma~\ref{lemma: Delta_h-1 for Di delta}, Lemma~\ref{lemma:sigma_delta}, and the fact that $\|\tilde{\theta}\|_{L^1(I_M)} \le C$.
Combining the estimates for $J_1$, $J_2$, and $J_3$, and taking supremum over all partial derivatives, we conclude that
$$
|\na u_{kh}(\tilde{t},x_0)|\le  C\ell_h\ell_k\|\na u\|_{L^\infty(I \times \Om)}.
$$
Using  that the dG($q$)cG($r$) method is invariant on $\Xkh$, by replacing $u$ and $u_{kh}$ with $u-\chi$ and $u_{kh}-\chi$ for any $\chi\in \Xkh$, and using the triangle inequality we obtain Theorem~\ref{thm:global_best_approx}.
\end{proof}

\section{Proof of pointwise interior best approximation results}\label{sec: proofs local results}

\subsection{Proof of Theorem \ref{thm:local_best_approx}}

To obtain the interior estimate we introduce a smooth cut-off function $\omega$ with the properties that
\begin{subequations} \label{def: properties of omega a local}
\begin{align}
\omega(x)&\equiv 1,\quad x\in B_d \label{eq: property 1 of omega_a local}\\
\omega(x)&\equiv 0,\quad x\in \Om\backslash B_{2d} \label{eq: property 2 of omega_a local}\\
 |\na \omega|&\le Cd^{-1}, \quad |\na^2 \omega|\le Cd^{-2}, \label{eq: property 3 of omega_a local}
\end{align}
\end{subequations}
where $B_d=B_d(x_0)$ is a ball of radius $d$ centered at $x_0$.

As in the proof of Theorem~\ref{thm:global_best_approx}, we obtain
\begin{equation}\label{eq: local starting expression}
-Du_{kh}(\tilde t,x_0)=B(u_{kh}, \tilde{g}_{kh}) =B(u, \tilde{g}_{kh})=B(\om u, \tilde{g}_{kh})+B((1-\om)u, \tilde{g}_{kh}),
\end{equation}
where $\tilde{g}_{kh}$ is the solution of~\eqref{eq: heat with dirac on the RHS}. The first term can be estimated using the global result from Theorem~\ref{thm:global_best_approx}. To this end we introduce $\tilde u=\omega u$ and the corresponding dG($q$)cG($r$) solution $\tilde u_{kh} \in \Xkh$ defined by
\[
B(\tilde u_{kh} - \tilde u, \varphi_{kh}) = 0 \quad \text{for all }  \varphi_{kh} \in \Xkh.
\]
There holds
\[
\begin{aligned}
B(\tilde u, \tilde{g}_{kh}) = B(\tilde u_{kh}, g_{kh}) = -D\tilde u_{kh}(\tilde t,x_0)
&\le   C \ell_k \ell_h  \norm{\na \tilde{u}}_{L^\infty(I\times \Om)}\\
&\le C \ell_k \ell_h  \left(d^{-1}\norm{u}_{L^\infty(I\times B_{2d})}+\norm{\na u}_{L^\infty(I\times B_{2d})}\right).
\end{aligned}
\]
This results in
\begin{equation}\label{eq:local_after_first_step}
|\na u_{kh}(\tilde{t},x_0)| \le C \ell_k \ell_h  \left(d^{-1}\norm{u}_{L^\infty(I\times B_{2d})}+\norm{\na u}_{L^\infty(I\times B_{2d})}\right) + B((1-\om)u, \tilde{g}_{kh}).
\end{equation}
It remains to estimate the term $B((1-\om)u, \tilde{g}_{kh})$. Using the dual expression~\eqref{eq:B_dual} of the bilinear form $B$ we obtain
\begin{equation}\label{eq:J1_J2_local}
\begin{aligned}
B((1-\om)u, \tilde{g}_{kh}) &=-\sum_{m=1}^{M}((1-\om)u,\pa_t\tilde{g}_{kh})_{\ImOm}+(\na((1-\om)u), \na \tilde{g}_{kh})_{\IOm}
\\&- \sum_{m=1}^{M}((1-\om)u_m, [\tilde{g}_{kh}]_m)_{\Om}= J_1 + J_2+J_3,
\end{aligned}
\end{equation}
where again in the sum with jumps we included the last term by setting $\tilde{g}_{kh,M+1} = 0$ and defining
consequently $[\tilde{g}_{kh}]_M = -\tilde{g}_{kh,M}$.
For $J_1$, adding and subtracting $(R_h(1-\om)u,\pa_t\tilde{g}_{kh})_{\IOm}$, we obtain
$$
J_1=-\sum_{m=1}^{M}(R_h(1-\om)u,\pa_t\tilde{g}_{kh})_{\ImOm}+\sum_{m=1}^{M}((I-R_h)(1-\om)u,\pa_t\tilde{g}_{kh})_{\ImOm}=J_{11}+J_{12}.
$$
Using that $\si^{-\frac{N}{2}}\le Cd^{-\frac{N}{2}}$ on $\Om\backslash B_d$ and $(1-\om)\le 1$, we obtain
\[
\begin{aligned}
J_{11}&=\sum_{m=1}^{M}(\na((1-\om)u),\na\Delta_h^{-1}\pa_t\tilde{g}_{kh})_{\ImOm}\\
&=\sum_{m=1}^{M}(\si^{-\frac{N}{2}}\na((1-\om)u),\si^{\frac{N}{2}}\na\Delta_h^{-1}\pa_t\tilde{g}_{kh})_{\ImOm}\\
&\le Cd^{-\frac{N}{2}}\sum_{m=1}^{M}\|\na((1-\om)u)\|_{L^\infty(I_m; L^2(\Om))}\|\si^{\frac{N}{2}}\na\Delta_h^{-1}\pa_t\tilde{g}_{kh}\|_{L^1(I_m; L^2(\Om))}\\
&\le Cd^{-\frac{N}{2}}\left(d^{-1}\|u\|_{L^\infty(I; L^2(\Om))}+\|\na u\|_{L^\infty(I; L^2(\Om))}\right)\sum_{m=1}^{M}\|\si^{\frac{N}{2}}\na\Delta_h^{-1}\pa_t\tilde{g}_{kh}\|_{L^1(I_m; L^2(\Om))}.
\end{aligned}
\]
Applying the discrete maximal parabolic regularity result from Lemma~\ref{lemma: fully discrete_maximal_parabolic} with respect to $\norm{\sigma^{\frac{N}{2}}\na\Delta_h^{-1}(\cdot)}_{L^2(\Omega)}$ we get
\begin{equation}\label{eq:J11_local}
\begin{aligned}
J_{11}&\le C\lk \lh^{\frac{N-1}{N}}d^{-\frac{N}{2}}\left(d^{-1}\|u\|_{L^\infty(I; L^2(\Om))}+\|\na u\|_{L^\infty(I; L^2(\Om))}\right)\|\si^{\frac{N}{2}}\na \Delta_h^{-1}P_hD\tilde{\delta}\|_{L^2(\Om)}\|\tilde{\theta}\|_{L^1(I_M)}\\
&\le C\lk\lh^{\frac{N-1}{N}+\half} d^{-\frac{N}{2}}\left(d^{-1}\|u\|_{L^\infty(I; L^2(\Om))}+\|\na u\|_{L^\infty(I; L^2(\Om))}\right),
\end{aligned}
\end{equation}
where in the last step we used Lemma~\ref{lemma: Delta_h-1 for Di delta}, Lemma~\ref{lemma:sigma_delta} and the fact that $\|\tilde{\theta}\|_{L^1(I_M)} \le C$.

The estimate for $J_{12}$ is slightly more involved since $R_h$ is a global operator. Put $\psi=(1-\om)u$, then pointwise in time we obtain
$$
((I-R_h)\psi,\pa_t\tilde{g}_{kh})_{\Om}=((I-R_h)\psi,\pa_t\tilde{g}_{kh})_{B_{d/2}}+((I-R_h)\psi,\pa_t\tilde{g}_{kh})_{\Om\backslash B_{d/2}}=I_1+I_2.
$$
Using local pointwise error estimates~\cite{AHSchatz_LBWahlbin_1977a}, the fact that $\psi$ is  supported on $\Om\backslash B_{d}$,  and the standard error estimate for $R_h$ we have
$$
\begin{aligned}
I_1&\le \|(I-R_h)\psi\|_{L^\infty{(B_{d/2})}}\|\pa_t\tilde{g}_{kh}\|_{L^1{(B_{d/2})}}\\
&\le C\left(\lh\|\psi\|_{L^\infty{(B_d)}}+d^{-\frac{N}{2}}\|(I-R_h)\psi\|_{L^2{(\Om)}}\right)\|\pa_t\tilde{g}_{kh}\|_{L^1{(\Om)}}\\
&\le Chd^{-\frac{N}{2}}\|\na\psi\|_{L^2{(\Om)}}\|\pa_t\tilde{g}_{kh}\|_{L^1{(\Om)}}\le Chd^{-\frac{N}{2}}\left(d^{-1}\|u\|_{L^2(\Om)}+\|\na u\|_{L^2(\Om)}\right)\|\pa_t\tilde{g}_{kh}\|_{L^1{(\Om)}}.
\end{aligned}
$$
Using that $\si\geq Cd$ on $\Om\backslash B_{d/2}$ we have for $I_2$:
$$
\begin{aligned}
I_2&=(\si^{-\frac{N}{2}}(I-R_h)\psi,\si^{\frac{N}{2}}\pa_t\tilde{g}_{kh})_{\Om\backslash B_{d/2}}\le Cd^{-\frac{N}{2}}\|(I-R_h)\psi\|_{L^2{(\Om)}}\|\si^{\frac{N}{2}}\pa_t\tilde{g}_{kh}\|_{L^2{(\Om)}}\\
&\le Chd^{-\frac{N}{2}}\|\na\psi\|_{L^2{(\Om)}}\|\si^{\frac{N}{2}}\pa_t\tilde{g}_{kh}\|_{L^2{(\Om)}}\le Chd^{-\frac{N}{2}}\left(d^{-1}\|u\|_{L^2(\Om)}+\|\na u\|_{L^2(\Om)}\right)\|\si^{\frac{N}{2}}\pa_t\tilde{g}_{kh}\|_{L^2{(\Om)}}.
\end{aligned}
$$
Combining estimates for $I_1$ and $I_2$ and using discrete maximal parabolic regularity from Lemma~\ref{lemma: fully discrete_maximal_parabolic} with respect to the $L^1(\Omega)$ norm and $\ltwonorm{\si^{\frac{N}{2}}(\cdot)}$, we obtain
\begin{equation}\label{eq:J12_local}
\begin{aligned}
J_{12}&\le C hd^{-\frac{N}{2}}\left(d^{-1}\|u\|_{L^\infty(I; L^2(\Om))}+\|\na u\|_{L^\infty(I; L^2(\Om))}\right) \sum_{m=1}^{M}\left(\|\pa_t\tilde{g}_{kh}\|_{L^1{(I_m\times\Om)}}+ \|\si^{\frac{N}{2}}\pa_t\tilde{g}_{kh}\|_{L^1(I_m; L^2{(\Om)})}\right)\\
&\le C\lk hd^{-\frac{N}{2}}\left(d^{-1}\|u\|_{L^\infty(I; L^2(\Om))}+\|\na u\|_{L^\infty(I; L^2(\Om))}\right) \|\tilde{\theta}\|_{L^1(I_M)}\times\\
&\left(\|P_h D\tilde{\delta}\|_{L^1{(\Om)}}+ \lh \|\si^{\frac{N}{2}}P_h D\tilde{\delta}\|_{L^2{(\Om)}}\right)\le C \lk \lh d^{-\frac{N}{2}}\left(d^{-1}\|u\|_{L^\infty(I; L^2(\Om))}+\|\na u\|_{L^\infty(I; L^2(\Om))}\right),
\end{aligned}
\end{equation}
where in the last step we used Lemma~\ref{lemma:sigma_delta}.
Thus, combining estimates for $J_{11}$ and $J_{12}$ we obtain
$$
J_1\le C\lk\lh^{\frac{N-1}{N}+\half} d^{-\frac{N}{2}}\left(d^{-1}\|u\|_{L^\infty(I; L^2(\Om))}+\|\na u\|_{L^\infty(I; L^2(\Om))}\right) .
$$
To estimate $J_2$, we use the H\"{o}lder inequality, Lemma \ref{lemma: fully discrete_maximal_parabolic}, and Lemma \ref{lemma: Delta_h-1 for Di delta}, to obtain
\begin{equation}\label{eq:J2_local}
\begin{aligned}
J_2&=(\si^{-\frac{N}{2}}\na((1-\om)u), \si^{\frac{N}{2}} \na \tilde{g}_{kh})_{\IOm}\\
&\le Cd^{-\frac{N}{2}}\|\na((1-\om)u)\|_{L^\infty(I; L^2(\Om))}\|\si^{\frac{N}{2}} \na \tilde{g}_{kh}\|_{L^1(I; L^2(\Om))}\\
&\le C\lk \lh^{\frac{N-1}{N}}d^{-\frac{N}{2}}\left(d^{-1}\|u\|_{L^\infty(I; L^2(\Om))}+\|\na u\|_{L^\infty(I; L^2(\Om))}\right)\|\si^{\frac{N}{2}}\na \Delta_h^{-1}P_h D\tilde{\delta}\|_{L^2(\Om)}\|\tilde{\theta}\|_{L^1(I_M)}\\
&\le C\lk\lh^{\frac{N-1}{N}+\half} d^{-\frac{N}{2}}\left(d^{-1}\|u\|_{L^\infty(I; L^2(\Om))}+\|\na u\|_{L^\infty(I; L^2(\Om))}\right).
\end{aligned}
\end{equation}
Similarly to $J_1$, to estimate $J_3$,
we, add and subtract $(R_h(1-\om)u,[\tilde{g}_{kh}]_m)_{\Om}$, to obtain
$$
J_3=-\sum_{m=1}^{M}(R_h((1-\om)u_m),[\tilde{g}_{kh}]_m)_{\Om}+\sum_{m=1}^{M}((I-R_h)((1-\om)u_m),[\tilde{g}_{kh}]_m)_{\Om}=J_{31}+J_{32}.
$$
Similarly to $J_{11}$, using that $\si^{-\frac{N}{2}}\le Cd^{-\frac{N}{2}}$ on $\Om\backslash B_d$ and $(1-\om)\le 1$, we obtain
\begin{equation}\label{eq:J31_local}
\begin{aligned}
J_{31}&=\sum_{m=1}^{M}(\na((1-\om)u),\na\Delta_h^{-1}[\tilde{g}_{kh}]_m)_{\Om}\\
&=\sum_{m=1}^{M}(\si^{-\frac{N}{2}}\na((1-\om)u),\si^{\frac{N}{2}}\na\Delta_h^{-1}[\tilde{g}_{kh}]_m)_{\Om}\\
&\le Cd^{-\frac{N}{2}}\sum_{m=1}^{M}\|\na((1-\om)u)\|_{L^\infty(I_m; L^2(\Om))}\|\si^{\frac{N}{2}}\na\Delta_h^{-1}[\tilde{g}_{kh}]_m\|_{L^2(\Om)}\\
&\le Cd^{-\frac{N}{2}}\left(d^{-1}\|u\|_{L^\infty(I; L^2(\Om))}+\|\na u\|_{L^\infty(I; L^2(\Om))}\right)\sum_{m=1}^{M}\|\si^{\frac{N}{2}}\na\Delta_h^{-1}[\tilde{g}_{kh}]_m\|_{L^2(\Om)}\\
&\le C\lk \lh^{\frac{N-1}{N}} d^{-\frac{N}{2}}\left(d^{-1}\|u\|_{L^\infty(I; L^2(\Om))}+\|\na u\|_{L^\infty(I; L^2(\Om))}\right)\|\si^{\frac{N}{2}}\na \Delta_h^{-1}P_h D\tilde{\delta}\|_{L^2(\Om)}\|\tilde{\theta}\|_{L^1(I_M)}\\
&\le C\lk\lh^{\frac{N-1}{N}+\half} d^{-\frac{N}{2}}\left(d^{-1}\|u\|_{L^\infty(I; L^2(\Om))}+\|\na u\|_{L^\infty(I; L^2(\Om))}\right).
\end{aligned}
\end{equation}
Similarly to $J_{12}$ we also obtain
$$
J_{32}\le C\lk \lh d^{-\frac{N}{2}}\left(d^{-1}\|u\|_{L^\infty(I; L^2(\Om))}+\|\na u\|_{L^\infty(I; L^2(\Om))}\right).
$$
Combining the estimates for $J_1$, $J_2$, and $J_3$, and taking supremum over all partial derivatives, we conclude that
\begin{multline*}
|\na u_{kh}(\tilde{t},x_0)|\le C\ell_k \ell_h\Bigl( d^{-1}\norm{u}_{L^\infty(I\times B_{2d})}+\norm{\na u}_{L^\infty(I\times B_{2d})}\\
+ d^{-\frac{N}{2}}\left(d^{-1}\|u\|_{L^\infty(I; L^2(\Om))}+\|\na u\|_{L^\infty(I; L^2(\Om))}\right)\Bigr).
\end{multline*}
Using that the dG($q$)cG($r$) method is invariant on $\Xkh$, by replacing $u$ and $u_{kh}$ with $u-\chi$ and $u_{kh}-\chi$ for any $\chi\in \Xkh$, we obtain Theorem~\ref{thm:local_best_approx}.

\section*{Acknowledgments}
The authors would like to thank Dominik Meidner for the careful reading of the manuscript and providing valuable suggestions that help to improve the presentation of the paper.


\bibliography{lit_W1_inf}
\bibliographystyle{siam}

\end{document}